\newtheorem{theorem}{Theorem}[section]    % Standard theorem environment
\newtheorem{lemma}[theorem]{Lemma}          % Lemma environment with numbering 
\newtheorem{proposition}[theorem]{Proposition}  
\newtheorem{claim}[theorem]{Claim}  
\newtheorem{example-definition}[theorem]{Example-Definition} 
\newtheorem{corollary}[theorem]{Corollary} 
\newtheorem{conjecture}[theorem]{Conjecture}
\theoremstyle{definition}
\newtheorem{definition}[theorem]{Definition}
\newtheorem{rem-def}[theorem]{Remark-Definition}
\newtheorem{remark}[theorem]{Remark}
\newtheorem{example}[theorem]{Example}    % Definition environment with 
\newtheorem*{remark-no-number}{Remark}             % Unnumbered environment for remarks.
\newtheorem*{definition*}{Definition}     
\newtheorem*{example*}{Example}     
\numberwithin{equation}{section}
\newcommand{\F}{\mathcal F_{ob} }
\newcommand{\cF}{\mathcal F_\xi }
\newcommand{\Int}{{\rm Int}}
\newcommand{\Q}{\mathbb{Q}}
\newcommand{\C}{\mathbb{C}}
\newcommand{\e}{\varepsilon}
\newcommand{\Diff}{{\rm Diff}^+}
\newcommand{\MCG}{{\mathcal MCG}}
\newcommand{\ri}{{\sf right}}
\newcommand{\A}{\mathcal{A}_{C}(S;P)}
\newcommand{\vphi}{\varphi}
\newcommand{\cL}{{\bf L}}
\title[quasi-right-veering braids]{Quasi-right-veering braids and non-loose links}
\author{Tetsuya Ito}
\address{Department of Mathematics, Kyoto University, Kyoto 606-8502, JAPAN}
\email{tetitoh@math.kyoto-u.ac.jp}
\author{Keiko Kawamuro}
\address{Department of Mathematics,   
The University of Iowa, Iowa City, IA 52242, USA}
\email{keiko-kawamuro@uiowa.edu}
\date{\today} 
\begin{document}

\begin{abstract}
We introduce a notion of ``quasi-right-veering'' for closed braids, which plays an analogous role to ``right-veering'' for open books.  
We show that a transverse link $K$ in a contact 3-manifold $(M,\xi)$ is non-loose if and only if every braid representative of $K$ with respect to every open book decomposition that supports $(M,\xi)$ is quasi-right-veering. 
We also show that several definitions of ``right-veering'' closed braids are equivalent.
\end{abstract}

\maketitle

\tableofcontents 

\section{Introduction}

The dichotomy between tight and overtwisted is fundamental to $3$-dimensional contact topology and detecting tightness of a given contact structure often arises as an important problem.

A Legendrian or transverse link in a contact 3-manifold is called \emph{loose} if the complement of the link contains an overtwisted disk, and otherwise it is called \emph{non-loose}.

In the classification of Legendrian and transverse links in contact 3-manifolds, non-loose vs. loose dichotomy plays a similar role to the tight vs. overtwisted dichotomy in the classification of contact structures. 
For instance, overtwisted contact structures are classified by homotopy equivalence \cite{el}, on the other hand loose null-homologous Legendrian (resp. transverse) links are coarsely classified  by classical invariants called the Thurston-Bennequin number and the rotation number (resp.  the self-linking number) \cite{ef, et}. Here, `coarse' means up to contactomorphism, {\em smoothly} isotopic to the identity. 

%With the Giroux correspondence \cite{gi} between contact $3$-manifolds and open books, Honda, Kazez and Mati\'c \cite{hkm,hkm2} show that checking right-veeringness of a mapping class gives an effective way to detect tightness of the compatible contact structure.

Recall a result of Honda, Kazez and Mati\'c.
 
\begin{theorem}\cite[Theorem 1.1]{hkm}
\label{cor:HKM}
A contact 3-manifold $(M,\xi)$ is tight if and only if for every  open book decomposition $(S,\phi)$ of $(M,\xi)$, $\phi$ is right-veering.
\end{theorem}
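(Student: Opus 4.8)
The plan is to prove the two implications separately, by contraposition, with quite different arguments.

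\emph{Overtwisted implies that some supporting open book is not right-veering.} Here I would invoke Goodman's characterization of overtwistedness: $(M,\xi)$ is overtwisted if and only if it is supported by an open book $(S,\phi)$ that is a negative stabilization of another open book. It then remains to check that a negative stabilization is never right-veering. Writing $\phi=\phi_0\circ\tau_c^{-1}$, with $\phi_0$ supported away from the stabilizing $1$-handle and $c$ meeting the cocore $\gamma$ of that handle exactly once, one verifies directly that the negative Dehn twist makes $\tau_c^{-1}(\gamma)$ leave its endpoint on $\partial S$ to the \emph{left} of $\gamma$, and that neither $\phi_0$ nor the isotopy of $\phi(\gamma)$ rel $\partial S$ minimizing $|\gamma\cap\phi(\gamma)|$ undoes this. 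So $\phi$ is not right-veering, and this particular open book witnesses the failure of the ``only if'' side. This direction is routine modulo Goodman's theorem.

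\emph{If every supporting open book is right-veering, then $(M,\xi)$ is tight.} I would prove the contrapositive: from a supporting open book $(S,\phi)$ with $\phi$ not right-veering, construct an overtwisted disk. Failure of right-veering gives a boundary component $C$, a basepoint $x\in C$, and a properly embedded arc $\gamma$ from $x$ such that, after isotoping $\phi(\gamma)$ rel $\partial S$ to minimize $|\gamma\cap\phi(\gamma)|$, the arc $\phi(\gamma)$ leaves $x$ strictly to the left of $\gamma$. Because $\xi$ is carried by $(S,\phi)$, I may take the pages convex, $\xi$ standard near the binding $B$, and Legendrian realizations of $\gamma$ on the page $S\times\{0\}$ and of $\phi(\gamma)$ on $S\times\{1\}$. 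I would then assemble an embedded disk $D$ out of these two Legendrian arcs, a subarc of the binding, and a subdisk of a page, and show that the ``veering to the left'' hypothesis forces the contact framing of $\partial D$ to dominate its disk (Seifert) framing, i.e. $tb(\partial D)\ge 0$. Since the Bennequin inequality forbids this in a tight contact manifold, $(M,\xi)$ must be overtwisted. Equivalently, one isotopes $D$ into convex position and reads off a contractible dividing curve, or builds a transverse overtwisted disk directly from the same data.

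The main obstacle is precisely this last step: translating the purely topological statement ``$\phi$ moves $\gamma$ to the left'' into the contact-geometric conclusion. One must carefully track the contact framings of the Legendrian realizations of $\gamma$ and $\phi(\gamma)$ relative to the page framing, understand why it is \emph{left}-veering (rather than right-veering) that pushes the Thurston--Bennequin count along $\partial D$ up to be non-negative, and confirm that $D$ can be isotoped into the required convex normal form. This is the point where the sign and orientation conventions defining ``right-veering'' have to be matched exactly with those of the compatible contact structure; once that bookkeeping is done, the conclusion follows from standard convex surface theory.
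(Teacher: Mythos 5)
Your two implications are stated with their labels crossed: the construction ``non-right-veering $\Rightarrow$ overtwisted disk'' is the contrapositive of ``tight $\Rightarrow$ every supporting open book is right-veering,'' not of the implication you announce in that paragraph. Since your first paragraph proves the other contrapositive, the two arguments together would still cover both directions, so this is only a bookkeeping slip. Your first direction (overtwisted $\Rightarrow$ the structure is supported by a negative stabilization, whose monodromy moves the cocore of the stabilizing handle to the left) is correct, and it is essentially the argument this paper reuses in the ($\Leftarrow$) part of the proof of Theorem~\ref{theorem:main}, via the connected-sum argument of Honda--Kazez--Mati\'c.

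The genuine gap is in your second direction. From a single arc $\gamma$ with $\phi(\gamma)\prec_{\sf right}\gamma$ at the basepoint you cannot conclude that the disk $D$ assembled from Legendrian realizations of $\gamma$ and $\phi(\gamma)$ satisfies ${\rm tb}(\partial D)\ge 0$: the interior intersection points of $\gamma$ and $\phi(\gamma)$ contribute to the relevant framing count with both signs, and left-veering controls only the contribution at the boundary. This is exactly the difference between Goodman's sobering-arc criterion (which requires \emph{all} intersections, interior and boundary, to be non-positive) and the full HKM theorem, and it is the step your sketch defers to ``bookkeeping.'' The missing idea is interpolation: when $P=\emptyset$, the relation $\phi(\gamma)\prec_{\sf right}\gamma$ can always be refined to a chain $\phi(\gamma)=\alpha_0\prec_{\sf right}\alpha_1\prec_{\sf right}\cdots\prec_{\sf right}\alpha_k=\gamma$ with $\Int(\alpha_i)\cap\Int(\alpha_{i+1})=\emptyset$ (\cite[Lemma 5.2]{hkm}; cf.\ Proposition~\ref{proposition:sufficient-condition}), so that each consecutive pair contributes one unambiguously positive hyperbolic singularity and no sign cancellation can occur. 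This is in fact how the present paper obtains the statement: it is the $K=\emptyset$ case of Theorem~\ref{theorem:main}, whose hard direction constructs a transverse overtwisted disk by a movie presentation along such a chain, with one positive elliptic point per endpoint $w_i$ and one positive hyperbolic point per step of the chain. Without the interpolation (or an equivalent bypass/convex-decomposition argument), your disk $D$ does not yield the Bennequin violation, so as written the second direction does not close.
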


In \cite{hkm}, Honda, Kazez and Mati\'c  also define the \emph{fractional Dehn twist coefficient} (\emph{FDTC}).
The FDTC is an invariant of an open book decomposition and detects right-veering-ness of the monodromy. Hence the FDTC can be used to determine tight or overtwisted of the compatible contact structure \cite{ch,hkm2,ik4}.

As a natural counterpart of right-veering mapping classes,  {\em right-veering closed braids} (with respect to  general open books) have been defined and studied in the literature \cite{bg,bvv,pl}. 
In Section \ref{sec:FDTC} we define a closed braid $\cL$ in open 
book $(S,\phi)$ and discuss how to assign an element $[\varphi_L]$ of the mapping class group  for  $\cL$. Then 
as a counterpart of the FDTC $c(\phi, C)$, we define $c(\phi, \cL, C)$ the {\em FDTC for a closed braid $\cL$ with respect to an open book $(S,\phi)$} and a boundary component $C$ of $S$. 
The definitions given in this paper are more rigorous than those in \cite{ik2}. 

\begin{center}
\begin{tabular}{|c|c|} \hline
open book $(S, \phi)$ & closed braid $\cL$ w.r.t. $(S, \phi)$ \\ \hline
right-veering mapping class & (quasi-)right-veering closed braid \\
$\phi \in \MCG(S)$ & $[\vphi_L] \in \MCG(S, P)$ \\ \hline
FDCT of $\phi$ w.r.t. $C$ & FDTC of braid $\cL$ w.r.t. $C$ \\
$c(\phi, C)$ &   $c(\phi, \cL, C):=c([\vphi_L], C)$ \\ \hline
\end{tabular}
\end{center}

In \cite{ik2} we see that various results on open books and the FDTC are translated to results on closed braids and the FDTC for closed braids. 
This gives us some hope that open books and closed braids in open books can be treated in a unified manner.

However, this is too optimistic: 
Note that any non-right-veering open book supports an overtwisted contact structure \cite{hkm}, but not every 
non-right-veering closed braid is loose. 
A simple example of this fact is a non-right-veering closed braid in an open book decomposition of a tight contact 3-manifold.

In this paper, we find a condition on closed braids to be loose. In Definition~\ref{defn:qveer} we introduce \emph{quasi-right-veering} closed braids.  
After studying basic properties of quasi-right-veering braids we show that it is the quasi-right-veering condition on closed braids that plays the same role as the right-veering condition on open books in Theorem~\ref{cor:HKM}. 
Our first main result is the following: \\

\noindent
{\bf Theorem~\ref{theorem:main}.}
{\em A transverse link $\mathcal{T}$ in a contact 3-manifold $(M,\xi)$ is non-loose if and only if every braid representative of $\mathcal{T}$ with respect to every open book decomposition  of $(M,\xi)$ is quasi-right-veering.  }\\

In Theorem~\ref{theorem:main} we allow the transverse link $\mathcal T$ to be empty. 
Our definition of quasi-right-veering implies that the empty braid with respect to an open book $(S,\phi)$ is quasi-right-veering 
if and only if $\phi$ is right-veering.
Having a loose empty link can be interpreted as having an overtwisted underlying contact structure.  
Therefore, Theorem~\ref{cor:HKM} follows as a corollary of Theorem~\ref{theorem:main}.

In Sections \ref{sec:depth} and \ref{sec:non-loose} we present more results concerning non-loose links.

The invariant {\em depth} is a measurement of non-looseness introduced by Baker and Onaran \cite{bo}. 
In Theorem~\ref{theorem:depthone} we relate depth-one links and non-quasi-right-veering braids. \\

%\noindent
%{\bf Theorem~\ref{theorem:depthone}.}
%{\em
%Let $(S, \vphi)$ be an abstract open book supporting $(M, \xi)$. 
%Let $B$ denote the binding of the induced open book decomposition  of $M$ and $L$ be a closed braid with respect to $(S, \vphi)$. 
%Let $K := B \cup L$ which is a transverse link in $(M, \xi)$.
%The depth $d(K)=1$ if and only if the braid $\cL =[(S, \vphi), L)]$ is not quasi-right-veering.   \\ }

\noindent
{\bf Theorem~\ref{theorem:depthone}.}
{\em
Let $(S, \phi)$ be an open book decomposition of $(M, \xi)$ and let $\cL$ be a closed braid in open book $(S,\phi)$.
The depth of axis-augmented transverse link for $\cL$ is one if and only if the braid $\cL$ is not quasi-right-veering.  \\ }

Here the \emph{axis-augmented transverse link for $\cL$} is a transverse link represented as the union of $\cL$ and the binding of the open book $(S,\phi)$ (see Section \ref{sec:depth} for precise definition).

Theorem~\ref{theorem:nonloose} below is a result on braids and it can be seen as a generalization of \cite[Corollary 1.2]{ik4} a result on open books. \\

\noindent{\bf Theorem~\ref{theorem:nonloose}.} 
{\em Let $(S,\phi)$ be a planar open book decomposition of a contact 3-manifold $(M,\xi)$. If a transverse link $\mathcal{T} \subset (M,\xi)$ is represented by a closed braid $\cL%=[(S,\vphi),L]
$ such that $c(\phi,\cL,C)>1$ for every boundary component $C$ of $S$, then $\mathcal{T}$ is non-loose.}\\

Finally in Section \ref{sec:comparison} we address one subtle but important issue on right-veering closed braids. 
As mentioned above, a couple of different definitions of right-veering closed braids have been existing in the literature (cf. \cite{bg, bvv, pl}), which we call $\partial$-$(\partial+P)$, $\partial$-$\partial$, and $\partial$-$P$ right-veering (see Definition~\ref{def:dd-right-veering}). 
We show that they are essentially equivalent though there are subtle differences as stated in Theorem~\ref{theorem:equivalencerv}.\\

\noindent{\bf Corollary~\ref{cor:rveer}.}
{\em For $\psi \in \MCG(S,P)$ the following are equivalent.
\begin{enumerate}
\item $\psi$ is $\partial$-$(\partial+P)$ right-veering.
\item $\psi$ is $\partial$-$\partial$ right-veering.
\item $\psi$ is $\partial$-$P$ right-veering.
\end{enumerate}
}
 
%%---------------- SECTION 2 ----------------

\section{Closed braids as mapping classes and their FDTC}
\label{sec:FDTC}

In this section we review the distinguished monodromy for a closed braid. 
The distinguished monodromy is an element of the mapping class group of a surface with marked points. 
We also review the definition of FDTC for closed braids and prove its well-definedness.

Let $S \simeq S_{g,d}$ be an oriented compact surface with genus $g$ and $d$ boundary components.  Throughout the paper we assume $d>0$. 
Let $P = \{p_1,\ldots,p_n\}$ be a (possibly empty) set of $n$ distinct interior points of $S$. 
Let $\MCG(S,P)$ (denoted by $\MCG(S)$ if $P$ is empty) be the mapping class group of the punctured surface $S \setminus P$, which is the group of isotopy classes of orientation preserving homeomorphisms of the surface $S$ fixing $\partial S$ pointwise and fixing $P$ setwise. 
Let $\Diff(S, \partial S)$ denote the group of orientation-preserving diffeomorphisms of $S$ that fix $\partial S$ pointwise. 
Let $\Diff(S, P, \partial S)$ be the group of orientation preserving diffeomorphisms of $S$ that fix $P$ setwise and $\partial S$ pointwise.  

Let $\phi\in\MCG(S)$ be a mapping class and $\vphi\in\Diff(S, \partial S)$ be a diffeomorphism representing $\phi$. 
We call the pair $(S,\varphi)$ \emph{abstract open book}, whereas we call the pair $(S,\phi)$ \emph{open book}.

\subsection{Generalized Birman exact sequence}

\begin{definition}
Let $P'=\{p'_1,\ldots,p'_n\}$ be a finite set of interior points of $S$. (We do not require $P=P'$.) 
Let $\{x_1, \cdots, x_n\}$ be an abstract set of $n$ points. 
A \emph{geometric $n$-braid} of $S$ joining $P \times \{0\}$ and $P' \times \{1\}$ is embedding of $n$ copies of the interval $[0,1]$ into $S \times [0,1]$,
\[ 
\begin{array}{cccccc}
\beta:& \overbrace{[0,1]\sqcup \cdots \sqcup [0,1]}^{n}&\cong& \{x_1, \cdots, x_n\}\times[0,1]& \rightarrow & S \times [0,1]\\
& & &\rotatebox{90}{$\in$}& &\rotatebox{90}{$\in$}\\
& & &(x_i,t) & \mapsto & ({\beta}^i(t),t)
\end{array}\]
such that 
$\{{\beta}^1(0),\ldots, {\beta}^n(0)\} = P$ and 
$\{{\beta}^1(1),\ldots, {\beta}^n(1)\}= P'$ as (unordered) sets. 
\end{definition}

We view the geometric braid $\beta$ as an isotopy $\{\beta_t: P \rightarrow S\ | \ t\in[0,1]\}$ of   %unordered $n$ distinct points $P$. 
the set of points $P$ such that $\beta_0 = id_P$ and $\beta_1(P)=P'$. 
We extend the isotopy $\{\beta_t\}$ to an ambient isotopy $\{\widehat\beta_{t}:S \rightarrow S: {\rm diffeomorphism} \ | \ t\in[0,1]\}$ of $S$ so that  
\begin{itemize}
\item
$\widehat\beta_t |_{P}=\beta_t$.
\item $\widehat\beta_t |_{\partial S} =id$ for all $t\in[0,1] $.
\item
$\widehat\beta_0 = id_{(S, P)}$.
\item 
$\widehat\beta_t$ is isotopic to $id_S$ for all $t\in[0,1]$ if we forget the marked points in $S$. 
\end{itemize}
This extension is unique up to isotopy of $S$ fixing $\partial S$ and $\beta_t(P)$. 
We obtain a diffeomorphism
\begin{equation}\label{eq:beta-hat}
\widehat\beta_1: (S,P) \rightarrow (S,P')
\end{equation} 
and call $\widehat\beta_1$ a 
\emph{diffeomorphism associated to the geometric braid} $\beta$.

When $P=P'$, the set of isotopy classes of geometric $n$-braids forms a group. 
Regardless of the choice of $P$, the group is isomorphic to $\pi_1(C(S, n))$ the fundamental group of the configuration space of $n$ distinct, unordered points in $S$. 
In this paper we denote $\pi_1(C(S, n))$ by $B_{n}(S)$ and call it the {\em $n$-stranded surface braid group} of $S$.  
We denote by $[\beta]$ the element of $B_n(S)$ represented by the geometric braid $\beta$ in $S\times [0,1]$ joining $P\times \{0\}$ and $P\times\{1\}$. 

When $P=P'$, the above diffeomorphism $\widehat{\beta}_1$ yields a well-defined homomorphism $i$ which we call the \emph{push map}:
\[
\begin{array}{ccc}
i: B_n(S) &\longrightarrow & \MCG(S,P)\\
\rotatebox{90}{$\in$} & &\rotatebox{90}{$\in$}\\
{}[\beta]  & \longmapsto & [\widehat{\beta}_1] 
\end{array}
\]

Suppose that $\vphi\in\Diff(S, P, \partial S)$. 
Forgetting the points $P$, the diffeomorphism $\varphi:(S,P)\rightarrow (S,P)$ can be regarded as a diffeomorphism $\varphi:S\to S$. 
This defines a surjective homomorphism $f:\MCG(S,P) \rightarrow \MCG(S)$ called the \emph{forgetful map}.

The push map $i$ and the forgetful map $f$ give the generalized Birman exact sequence \cite[Theorem 9.1]{FM}
\begin{equation}
\label{eqn:Birman}
1 \rightarrow B_n(S)  \stackrel{i}{\rightarrow} \MCG(S,P)  \stackrel{f}{\rightarrow} \MCG(S) \rightarrow 1.
\end{equation}

\subsection{The distinguished monodromy}\label{subsec:2.2}

Let $\phi \in \MCG(S)$ be a mapping class. 
Take a collar neighborhood $\nu(\partial S)$ of the boundary $\partial S$. 
Choose $\varphi \in \Diff(S, \partial S)$ that represents $\phi\in \MCG(S)$ and satisfies:
\begin{equation}
\label{eqn:collar}
\varphi|_{\nu (\partial S)}=id.
\end{equation} 

Let 
$$M_{(S, \varphi)} := S \times[0,1] \slash \sim,$$
where ``$\sim$'' denotes the equivalence relation 
\[ (x,1)\sim(\varphi(x),0) \ \ \mbox{for all } x \in S, \quad (x,t)\sim (x,s) \ \ \mbox{for all } x \in \partial S \mbox{ and }  t,s \in [0,1].\] 
Denote the quotient map by $\pi: S\times[0,1] \rightarrow M_{(S,\varphi)}$. 

The manifold $M_{(S,\varphi)}$ is naturally equipped with the following open book decomposition: 
For $t \in [0,1]$, we call $S_t:= \pi (S\times\{t\}) \subset M_{(S,\varphi)}$ a \emph{page} and $B:=\pi (\partial S \times\{t\}) \subset M_{(S,\varphi)}$ (this does not depend on the choice of $t$) the \emph{binding}.
The quotient map $\pi$ restricted on $S\times\{t\}$ gives a diffeomorphism $\pi|_{S \times\{t\}}: S\times\{t\} \rightarrow S_t$.
Composing $(\pi|_{S \times\{t\}})^{-1}$ and the projection $pr: S\times[0,1] \ni (x,t) \mapsto x\in S$, every page $S_t$ is diffeomorphic to $S$. 
\[
\begin{array}{ccccc}
 S_t &  \stackrel{
(\pi|_{ S\times\{t\}})^{-1}}{\longrightarrow}  & S \times \{t\} & \stackrel{pr}{\longrightarrow} & S \\
\rotatebox{90}{$\in$}& &\rotatebox{90}{$\in$} & &\rotatebox{90}{$\in$} \\
\pi(x,t) & \longmapsto & (x, t) & \mapsto & x 
\end{array}
\]
Denote the diffeomorphism by $p_t := pr \circ 
(\pi|_{ S\times\{t\}})^{-1}:S_t\to S$. 
We may extend $p_t$ to 
\begin{equation}\label{eq:projection-p}
p: M_{(S,\varphi)} \to S
\end{equation}
by setting $p|_{S_t}=p_t$ and call the map $p$ a {\em projection. }

\begin{definition}[Closed braids w.r.t. $(S, \vphi)$]
\label{def:braid-vphi} 
A \emph{closed $n$-braid $L$ with respect to an abstract open book $(S,\varphi)$} is an oriented link in the 3-manifold $M_{(S,\varphi)}$ such that $L \subset M_{(S,\varphi)} \setminus B$ and $L$   intersects every page $S_t$ positively and transversely at $n$ points. 
\end{definition}

Suppose that under the projection $p:M_{(S,\varphi)}\to S$ the $n$ intersection points $L \cap S_0$ are contained in $\nu(\partial S)$ :
\begin{equation}
\label{eqn:P}
P:=p(L\cap S_0) \subset \nu(\partial S).
\end{equation} 
Recall that in the manifold $M_{(S,\varphi)}$, a point $x$ in the page $S_1$ is identified with the point $\varphi(x)$ in $S_0$.
By (\ref{eqn:collar}), we note that $p(L\cap S_1)=\varphi\circ p(L\cap S_1) = p(L\cap S_0)=P$. 
Cutting the manifold $M_{(S,\varphi)}$ along the page $S_0$, the closed braid $L$ gives rise to a geometric $n$-braid denoted by $\beta_{L} \subset S \times [0,1]$ joining $P\times\{0\}$ and $P\times\{1\}$ such that $%\tau\circ
\pi(\beta_L)=L$. 
% \marginpar{\tiny If we don't assume $P\subset \nu(\partial S)$, $\beta_L$ should be denoted by $\beta_{L, \varphi}$}

By (\ref{eqn:collar}) and (\ref{eqn:P}) we have $\varphi|_P = id$ hence we may view $\varphi$ as an element of $\Diff(S, P, \partial S)$. % the group of orientation preserving diffeomorphisms of $S$ that fixes $P$ setwise and $\partial S$ pointwise. 
In order to distinguish $\varphi$ in $\Diff(S, \partial S)$ and $\varphi$ in $\Diff(S, P, \partial S)$, we denote the latter by $j(\varphi)$.

%Note that if $P \not\subset\nu(\partial S)$ then $\varphi(P) \neq P$ in general, and the homomorphism $j$ cannot be defined. 
%The map $j$ induces a homomorphism: 
%\marginpar{\tiny $j_*$ is not well-defined because $\phi$ can be represented by $\varphi$ that doesn't satisfy (2.3). Shall we delete $j_*$? -- Oh, I understandthe problem: the map $j$ is also not defined in general} 
%$$
%\begin{array}{ccc}
%j_{*}: \MCG(S) &\longrightarrow & \MCG(S,P)\\
%\rotatebox{90}{$\in$}& &\rotatebox{90}{$\in$}\\
%\phi =[\varphi] & \longmapsto & [j(\varphi)] 
%\end{array}
%$$ 

Let 
\begin{equation}\label{def of varphi_L}
\varphi_L:= \widehat{\beta_L}_1\circ j(\varphi)\in\Diff(S, P, \partial S).
\end{equation} 
Then we have 
\begin{equation}\label{eqML}
(M_{(S,\varphi)}, L) \simeq \left( (S, P) \times [0,1] \right) / \sim_{\varphi_L}
\end{equation}
where the equivalence relation ``$\sim_{\varphi_L}$'' satisfies  
$(x, 1) \sim (\varphi_L(x), 0)$ for all $x \in S$ and $(x, 1) \sim (x, t)$ for all $x\in\partial S$ and $t\in[0,1]$.

\begin{definition}[Distinguished monodromy]\label{def:distinguished-monodromy}
Let $L$ be a closed braid with respect to the abstract open book $(S,\varphi)$ satisfying the conditions (\ref{eqn:collar}) and (\ref{eqn:P}).  
The \emph{distinguished monodromy} is the mapping class
\[ [\varphi_L] \in \MCG(S,P). \] 
\end{definition}

In \cite{ik2} the above $[\varphi_L]$ is denoted by $\phi_L$. In this paper, we do not use the notation $\phi_L$ because 
we want to distinguish closed braids with respect to $(S, \vphi)$ (Definition~\ref{def:braid-vphi}) and closed braids with respect to $(S, \phi)$ (Definition~\ref{def:braids-phi}), and $L$ is only defined with respect to $(S, \vphi)$.

\begin{definition}[Braid isotopy]
We say that two closed braids $L$ and $L'$ with respect to the same abstract open book $(S,\varphi)$ are \emph{braid isotopic} if they are isotopic through closed braids.
\end{definition}

In practice, we tend to identify two closed braids if they are braid isotopic. 
In Proposition~\ref{prop:isotopic-braids-1} we discuss how a braid isotopy affects the distinguished monodromy. 

\begin{definition}[Point-changing isomorphism]\label{def:point-changing}
Recall that when $|P|=|P'|$, the groups $\MCG(S,P)$ and $\MCG(S,P')$ are isomorphic. 
We say that an isomorphism $\Theta:\MCG(S,P) \rightarrow \MCG(S,P') $ is \emph{point-changing} if $\Theta$ is defined by $\Theta([\psi])= [\theta^{-1} \circ \psi \circ \theta] $ for some orientation-preserving diffeomorphism $\theta: (S, P')\to (S, P)$ such that $\theta|_{\partial S}=id$ and $\theta$ is isotopic to $id_S$ if we forget the marked points of $P$ and $P'$.

If $P=P'$, every point-changing isomorphism is an inner automorphism of $\MCG(S,P)$. 

\end{definition}

\begin{proposition}\label{prop:isotopic-braids-1}
Let $L$ and $L'$ closed $n$-braids with respect to an abstract open book $(S, \varphi)$ satisfying (\ref{eqn:collar}).
Suppose that $P := p(L \cap S_0) \subset \nu(\partial S)$ and $P' := p(L' \cap S_{0}) \subset \nu(\partial S)$. 
If $L$ and $L'$ are braid isotopic then 
there exists a point-changing isomorphism
$$\gamma^*:\MCG(S,P)\to \MCG(S, P')$$ such that 
$[\varphi_{L'}] = \gamma^*([\varphi_L])$.
%$\phi_{L'} = \gamma^*(\phi_L)$. 
\end{proposition}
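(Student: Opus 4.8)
The plan is to exhibit the isomorphism $\gamma^*$ concretely as the map induced by a homeomorphism of $S$ carrying $P$ to $P'$, and then verify the two claimed properties of the resulting monodromies. Since $L$ and $L'$ are braid isotopic, there is an ambient isotopy of $M = M_{(S,\phi)}$ carrying $L$ to $L'$; the key point is to arrange this isotopy to be compatible with the open book structure. Concretely, using that both $L\cap S_0$ and $L'\cap S_0$ lie in the collar $\nu(\partial S)$, I would first isotope $L'$ (rel nothing) to a braid $L''$ with $L''\cap S_0 = L\cap S_0$, i.e.\ $P'' = P$; this costs only a braid isotopy supported near $S_0$ and changes $\phi_{L'}$ to $\phi_{L''}$ by conjugation, so it suffices to treat the two cases ``$P'=P$'' and ``$L, L'$ differ by a braid isotopy fixing the endpoints $P$ on $S_0$'' separately, and compose.

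For the case $P' = P$ (the last sentence of the statement): a braid isotopy from $L$ to $L'$ fixing $P$ on $S_0$ yields, after cutting along $S_0$, a path in $B_n(S)$ from $\beta_L$ to a conjugate of $\beta_{L'}$, together with a loop of homeomorphisms of $(S,P)$ that may be nontrivial in $\MCG(S,P)$; tracking the trace of this loop (the ``monodromy of the isotopy'') gives an element $h \in \MCG(S,P)$ with $\phi_{L'} = h\,\phi_L\, h^{-1}$. The cleanest way to see this is via \eqref{eqML}: a fiber-preserving ambient isotopy $\Phi_s$ of $\bigl((S,P)\times[0,1]\bigr)/\!\sim_{\phi_L}$ carrying $L$ to $L'$ must, after lifting to $(S,P)\times[0,1]$, conjugate the gluing map $\varphi_L$ by the time-$1$ restriction of the lift, and that restriction represents the inner-automorphism element. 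For the general case $P'\neq P$, choose any orientation-preserving homeomorphism $\gamma$ of $S$, supported in $\nu(\partial S)$ and equal to the identity on $\partial S$, with $\gamma(P)=P'$; conjugation by $\gamma$ induces $\gamma^* : \MCG(S,P)\to\MCG(S,P')$, and since $\gamma$ is supported away from the interesting part of the monodromy and is isotopic to the identity in $\MCG(S)$, one checks $\gamma^* (j_*(\phi)) = j_*(\phi)$ (as elements over the \emph{same} base mapping class in the split Birman sequence \eqref{eqn:Birman}) and $\gamma^*(i(\beta_L)) = i(\beta_{L'})$ up to the inner ambiguity already accounted for; hence $\gamma^*(\phi_L) = \phi_{L'}$.

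The step I expect to be the main obstacle is making the ambient isotopy $L \rightsquigarrow L'$ \emph{fiber-preserving}, i.e.\ replacing an arbitrary smooth isotopy of braids by one that respects the fibration of $M\setminus B$ over $S^1$ (equivalently, an isotopy through closed braids). This is where the definition of ``braid isotopic'' must be used carefully: by definition a braid isotopy is already an isotopy through transverse-to-the-pages curves, so after a preliminary adjustment near $S_0$ it lifts to a path of surface braids in $B_n(S)$; the care is in bookkeeping the base point $P$ versus $P'$ and in confirming that the loop of homeomorphisms one extracts genuinely lands in $\MCG(S,P)$ rather than merely in the mapping class group of $S$ with $P$ marked up to permutation. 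Once the fiber-preserving model is in hand, the identities $\phi_{L'} = \gamma^*(\phi_L)$ and the innerness for $P'=P$ follow by the formal manipulation of \eqref{eqML} and the splitting $f\circ j_* = \mathrm{id}$ described above.
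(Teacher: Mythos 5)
Your argument is correct and follows essentially the same route as the paper: the paper extends the trace $\gamma$ of the braid isotopy on $S_0$ to an ambient isotopy $\widehat{\gamma}_t$ of $S$ (trivial in $\MCG(S)$ after forgetting marked points) and defines $\gamma^*$ as conjugation by $[\widehat{\gamma}_1]$, which is exactly your conjugating homeomorphism — your two steps, normalizing $P'$ to $P$ and then extracting the point-push loop, compose to the paper's single $\widehat{\gamma}_1$. The only content you defer is the bookkeeping you flag at the end, which the paper carries out as the explicit computation $\phi_{L'}=i'(\gamma^{-1}\circ\beta_L\circ\gamma^{\phi})\circ j'_*(\phi)=[\widehat{\gamma}_1]^{-1}\circ\phi_L\circ[\widehat{\gamma}_1]$ in the split Birman exact sequence.
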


\begin{proof}
Cutting the manifold $M_{(S,\varphi)}$ along the page $S_{0}$, the closed braids $L$ and $L'$ give rise to  geometric $n$-braids $\beta:=\beta_L$ and $\beta':=\beta_{L'} \subset S\times [0,1]$,  respectively. 
Since $L$ and $L'$ are braid isotopic we have 
\begin{equation}\label{eq:beta}
[\beta'] = [\gamma^{-1} \bullet \beta \bullet  \gamma^{\varphi}] \ \mbox{ (read from the right to left)}
\end{equation}
for some geometric $n$-braid $\gamma \subset S\times [0,1]$ connecting $P' \times \{0\}$ and $P \times \{1\}$ and specified by an isotopy 
$\{\gamma_t: P' \to S \ | \ t\in [0,1]\}$
such that $\gamma_0=id_{P'}$ and $\gamma_1(P')=P$. 
%$$
%\begin{array}{cccc}
%\gamma: & \{x_1,\cdots,x_n\} \times  [0,1] & \longrightarrow & S\times[0,1] \\
%& \rotatebox{90}{$\in$}& &\rotatebox{90}{$\in$}\\
%&(x_i, t) & \longmapsto &\{ \gamma^1(t),\ldots, \gamma^n(t)\} \times \{t\}
%\end{array}
%$$ 
%connecting $P' \times \{0\}$ and $P \times \{1\}$.
Here
\begin{itemize}
\item the bullet ``$\bullet$'' is the concatenation of geometric braids.
\item $\gamma^{-1}$ is the geometric $n$-braid joining $P\times\{0\}$ and $P'\times\{1\}$ defined by 
$$(\gamma^{-1})_t :=\gamma_{1-t}.$$
\item  $\gamma^{\varphi}$ is the geometric $n$-braid joining $P'\times\{0\}$ and $P\times\{1\}$  defined by $$(\gamma^{\varphi})_t := \varphi \circ \gamma_t.$$
\end{itemize}

As done in (\ref{eq:beta-hat}), we extend $\{\gamma_t\}$ to an isotopy $\{\widehat{\gamma}_{t}:S \rightarrow S: {\rm diffeomorphism} \  | \ t\in [0,1] \}$. 
The diffeomorphism $\widehat \gamma_1: (S, P')\to(S, P)$ associated to the geometric braid $\gamma$ gives rise to a point-changing isomorphism 
\begin{equation}
\label{eq:gamma}
\begin{array}{cccc}
\gamma^*: & \MCG(S,P) & \longrightarrow & \MCG(S,P') \\
& \rotatebox{90}{$\in$}& &\rotatebox{90}{$\in$}\\
& [\psi] & \mapsto & [{\widehat{\gamma}_1}^{-1} \circ \psi \circ \widehat{\gamma}_1]
\end{array}
\end{equation}

Similarly, we extend $\{(\gamma^{\varphi})_t\}$ to an isotopy
$\{\widehat{\gamma^{\varphi}}_t: S \rightarrow S\}$. Let $i': B_n(S)\to \MCG(S, P')$ be the push map in the generalized Birman exact sequence (\ref{eqn:Birman}) where $P$ is replaced with $P'$.
Then by the definition of push map we have 
\begin{equation}
\label{eq:beta2} i'([\gamma^{-1} \bullet \beta \bullet  \gamma^{\varphi}]) = 
 [{\widehat{\gamma}_1}^{-1} \circ \widehat{\beta}_1 \circ  \widehat{\gamma^{\varphi}}_1] 
\end{equation}

%Let $j':\Diff(S, \partial S) \to \Diff(S, P', \partial S)$ and $j'_{*}: \MCG(S) \to\MCG(S,P')$ denote the homomorphisms constructed in the same way as $j$ and $j_*$ replacing $P$ with $P'$. 
Let $j'(\varphi)$ denote the diffeomorphism $\varphi$, viewed as an element of $\Diff(S, P', \partial S)$. 
By the definition of the braid $\gamma^{\varphi}$ we have
\begin{equation}
\label{eq:beta3}
\widehat{\gamma^\varphi}_1 = \varphi \circ\widehat{\gamma}_1 \circ \varphi^{-1}=j(\varphi) \circ\widehat{\gamma}_1 \circ j'(\varphi)^{-1}
\end{equation}

The following calculation concludes the proposition: 
\begin{align*}\label{eq:conjugation}
[\varphi_{L'}]
%&= i'([\beta']) j'_*(\phi)  & (\textrm{by Definition } \ref{def:distinguished-monodromy}) \\
&= [\widehat{\beta'}_1 \circ j'(\varphi)] =i'([\beta'])[j'(\varphi)] 
 & (\textrm{by Definition } \ref{def:distinguished-monodromy}) \\
&= i'([\gamma^{-1} \bullet \beta \bullet \gamma^{\varphi}])  [j'(\varphi)] & (\textrm{by } (\ref{eq:beta}) )\\
&= [{\widehat\gamma_1}^{-1} \circ \widehat{\beta}_1 \circ \widehat{\gamma^{\varphi}}_1 \circ j'(\varphi)]& (\textrm{by } (\ref{eq:beta2}) )\\
&= [{\widehat\gamma_1}^{-1} \circ \widehat{\beta}_1 \circ j(\varphi) \circ \widehat\gamma_1 \circ j'(\varphi)^{-1} \circ j'(\varphi)]& (\textrm{by } (\ref{eq:beta3}) )\\
&= [{\widehat\gamma_1}^{-1} \circ ( \widehat\beta_1 \circ j(\varphi)) \circ \widehat\gamma_1 ] \\
&= \gamma^* ([\varphi_L]) & (\textrm{by Definition } \ref{def:distinguished-monodromy} \textrm{ and } (\ref{eq:gamma}))\\
\end{align*} 
\end{proof}

%Let us take this opportunity to explain and give a strict definition for a notion of \emph{a closed braid in an open book $(S,\phi)$} which we used in our previous paper. We noticed that this notation is confusing and need more care.

In our previous papers such as \cite{ik2}, by ``a closed braid with respect to the open book $(S,\phi)$'' we mean a closed braid with respect to some  \emph{abstract} open book $(S,\varphi)$ with $[\varphi]=\phi$.
As long as we have geometric argument (eg. open book foliations),  this causes no trouble. However, when we discuss connection to mapping class groups (eg. the distinguished monodromy), we need understand what happens if we take another diffeomorphism $\varphi'$ with $[\vphi']=\phi$.

Assume that $\vphi$ and $\vphi'$ are isotopic.
Let $\{\rho_t\in\Diff(S, \partial S) \ |\ t\in[0,1]\}$ be an isotopy between $id_S$ and $\vphi' \circ \vphi^{-1}$, and define 
$$\overline{\rho}:S\times[0,1] \rightarrow S\times[0,1]$$
by $$\overline{\rho}(x,t)=(\rho_{1-t}(x),t).$$ 
Since $\overline{\rho}(x,1)=(x,1)$ and $\overline{\rho}(\vphi(x),0)=(\vphi'(x),0)$, $\overline{\rho}$ induces a diffeomorphism 
\begin{equation}\label{eq:diffeo-rho}
\rho: M_{(S,\varphi)} \rightarrow M_{(S,\varphi')}
\end{equation}
preserving the pages: For each $t \in [0,1]$ the page $S_t$ of $M_{(S,\varphi)}$ is mapped to the page $S_t$ of $M_{(S,\varphi')}$.
In particular, if $L$ is a closed braid with respect to $(S,\varphi)$ then $\rho(L)$ is a closed braid with respect to $(S,\vphi')$.

\begin{definition}[Closed braids w.r.t. $(S,\phi)$]
\label{def:braids-phi}
Let $L \subset M_{(S,\varphi)}$ (resp. $L' \subset M_{(S,\varphi')}$) be a closed braid with respect to $(S, \vphi)$ (resp. $(S,\varphi')$).  
We say that triples $((S,\varphi),L)$ and $((S,\varphi'),L')$ are \emph{equivalent}, if 
$[\vphi]=[\vphi']$ and the closed braid $\rho(L) \subset M_{(S,\varphi')}$ is braid isotopic to $L'$. 

In particular, closed braids $L$ and $L'$ are called {\em braid isotopic} if 
 $\vphi=\vphi'$ and $((S,\varphi),L)$ and $((S,\varphi'),L')$ are equivalent.

The equivalence class of $((S,\varphi),L)$ with $[\varphi]=\phi$ is called a \emph{closed braid with respect to the open book $(S,\phi)$} and denoted by $[((S,\varphi),L)]$ or simply $\cL$. 
\end{definition}

The distinguished monodromy $[\varphi_L]$ is an invariant of the equivalence class of $((S,\varphi),L)$ (i.e., a closed braid with respect to $(S, \phi)$) up to point-changing isomorphism.

\begin{theorem}
\label{thm:well-defined-up-to-point}
Let $(S,\varphi)$ and $(S,\varphi')$ be abstract open books with $\vphi=\vphi'=id$ on $\nu(\partial S)$ and $[\varphi]=[\varphi'] \in \MCG(S)$.
Let $L$ and $L'$ be closed braids with respect to $(S,\varphi)$ and $(S,\vphi')$, respectively. 
Let $P:=p(L\cap S_0)$ and $P':=p(L' \cap S_0)$. Suppose that the conditions (\ref{eqn:collar}) and (\ref{eqn:P}) are satisfied for both braids. 
If $((S,\varphi),L)$ and $((S,\varphi'),L')$ are equivalent then there is a point-changing isomorphism $\gamma^{*}: \MCG(S,P) \rightarrow \MCG(S,P')$ such that $[\vphi'_{L'}]=\gamma^{*}([\vphi_{L}])$.
\end{theorem}

\begin{proof}

Since $\vphi=\vphi'=id$ on $\nu(\partial S)$ 
we may assume that the above isotopy $\{\rho_t:S\rightarrow S \ |\ t\in[0,1]\}$ between $id_S$ and $\vphi' \circ \vphi^{-1}$
satisfies $\rho_t=id$ on $\nu(\partial S)$ for all $t\in[0,1]$. 
Define $\rho$ and $\overline\rho$ as above. 
Then $\rho=id$ on $\partial S \times [0,1]$ 
and 
$p(L\cap S_0)= p(\rho(L) \cap S_0)=:P \subset \nu(\partial S).$ 
This implies that 
\begin{equation}\label{eq:j(vphi)}
[j(\vphi)] = [j(\vphi')] \in \MCG(S, P).
\end{equation}

%Then $\rho=id$ on $\partial S \times [0,1]$ so the geometric braids $\beta_L$ and $\overline{\rho}(\beta_L)$ have the same endpoints. 

Let $\beta_L$ (resp. $\beta_{\rho(L)}$)  denote the geometric braid obtained from $L$ (resp. $\rho(L)$) by cutting the manifold $M_{(S,\varphi)}$ (resp. $M_{(S,\varphi')}$) along the page $S_{0}$.  
Note that $\beta_{\rho(L)} = \overline\rho(\beta_L)$.

For $(x, t)\in S\times[0,1]$ and $s\in[0,1]$ let $$\overline{\rho}_{s}(x,t):=(\rho_{s(1-t)}(x),t).$$ 
Then $\{ \overline\rho_s \in \Diff(S\times[0,1])  \ | \ s\in[0,1] \}$ 
gives an isotopy between $\overline{\rho}_0=id_{S\times[0,1]}$ and $\overline{\rho}_1=\overline{\rho}$. 
Therefore, the geometric braids $\beta_L$ and $\overline{\rho}(\beta_L)$ are isotopic through the family of geometric braids $\{\overline\rho_s(\beta_L) \ | \ s \in[0,1]\}$ having the same endpoints, and in the surface braid group  
\begin{equation}\label{eq:beta_L}
[\beta_{\rho(L)}]=[\overline{\rho} (\beta_L)]=[\beta_L] \in B_{n}(S).
\end{equation}

By (\ref{eq:j(vphi)}) and (\ref{eq:beta_L}) we have an identity between the distinguished monodromies of $L$ and $\rho(L)$: 
$$[\varphi'_{\rho(L)}]=i([\beta_{\rho(L)}])[j(\varphi')]=i([\beta_L])[j(\varphi)]=[\varphi_L] \in \MCG(S,P).$$ 
Since $\rho(L)$ and $L'$ are braid isotopic, by Proposition~\ref{prop:isotopic-braids-1}, $[\varphi'_{\rho(L)}]$ and $[\varphi'_{L'}]$ are related by a point-changing isomorphism. 
\end{proof}

%If $((S,\varphi),L)$ and $((S,\varphi'),L')$ are equivalent, Proposition~\ref{prop:isotopic-braids-1} states that there is a point-changing isomorphism $\gamma^{*}: \MCG(S,P) \rightarrow \MCG(S,P')$ such that $[\vphi_{L'}]=\gamma^{*}([\vphi_{L}])$.
%$\phi_{L'}=\gamma^{*}(\phi_{L})$.

%In particular, as we will discuss below, a notion of FDTC, for (quasi)-right-veering closed braids in an open book $(S,\phi)$ will be well-defined.

\begin{remark}
\label{rem:def-phiL}
One can develop a notion of the distinguished monodromy in more general setting.
In the definition of the distinguished monodromy, one can replace the conditions (\ref{eqn:collar}) and (\ref{eqn:P}) with a weaker condition $\varphi(P)=P$ since the latter condition enables us to define a diffeomorphism $j(\varphi) \in \Diff(S, P, \partial S)$. Similarly, Theorem \ref{thm:well-defined-up-to-point} is valid under similar weaker conditions like $\varphi(P)=P$ and $\varphi'(P')=P'$
 where $P:=p(L\cap S_0)$ and $P':=p(L' \cap S_0)$ (in general case, $[\varphi_L]$ and $[\varphi'_{\rho(L)}]$ are not identical, but related by a point-changing isomorphism).

However, for a given $\varphi \in \Diff(S,\partial S)$, finding an $n$-point set $P\subset S$ with $\varphi(P)=P$ is a difficult task since this amounts to find a periodic orbit of $\varphi$ explicitly. This is a reason why the assumptions (\ref{eqn:collar}) and (\ref{eqn:P}) make it easier to handle the distinguished monodromy of a closed braid.

\end{remark}

\subsection{The fractional Dehn twist coefficient for closed braids} 
\label{section:FDTC}

Let $C$ be a boundary component of $S$. 
Denote the \emph{fractional Dehn twist coefficient} (\emph{FDTC}) with respect to $C$, which is defined by Honda, Kazez and Mati\'c \cite{hkm}, by $$c(-,C):\MCG(S,P)\rightarrow \Q.$$

%{\color{red} \marginpar{\tiny where do we move this sentence?} We have $c(\psi[\varphi_{L}]\psi^{-1}, C) = c([\varphi_{L}], C)$ for any $\psi\in\MCG(S,P)$. }
%In fact a stronger result holds. 

\begin{proposition}\label{prop:isotopic-braids} 
Let $(S,\varphi)$ and $(S,\varphi')$ be abstract open books with $\vphi=\vphi'=id$ on $\nu(\partial S)$ and $[\varphi]=[\varphi'] \in \MCG(S)$. 
Let $L$ and $L'$ be closed braids with respect to $(S,\varphi)$ and $(S,\vphi')$, respectively, satisfying the conditions (\ref{eqn:collar}) and (\ref{eqn:P}). 
If $((S,\varphi),L)$ and $((S,\varphi'),L')$ are equivalent then for every boundary component $C$
$$c([\vphi_L], C)=c([\vphi'_{L'}], C).$$
%
%Let $L$ and $L'$ be closed braids with respect to $(S, \varphi)$ satisfying (\ref{eqn:collar}), $P := p(L \cap S_0) \subset \nu(\partial S)$, and $P' := p(L' \cap S_{0}) \subset \nu(\partial S)$. 
%If $L$ and $L'$ are braid isotopic then 
%$$c(\phi, L , C) = c(\phi, L', C)$$ for every boundary component $C$.   
\end{proposition}

\begin{proof}
%This is a corollary of Proposition~\ref{prop:isotopic-braids-1} and Theorem~\ref{thm:well-defined-up-to-point}. 

Let $P:=p(L\cap S_0)$ and $P':=p(L' \cap S_0)$. 
By Theorem~\ref{thm:well-defined-up-to-point} 
there is a point-changing isomorphism $\gamma^{*}: \MCG(S,P) \rightarrow \MCG(S,P')$ such that $[\vphi'_{L'}]=\gamma^{*}([\vphi_{L}])$. 
By the properties of the isotopy $\{\widehat\gamma_t\}$ in the proof of Proposition~\ref{prop:isotopic-braids-1}, the following diagram commutes:
\[
\xymatrix{
\MCG(S,P) \ar[rr]^{\gamma^{*}} \ar[rd]_{c(-,C)}& & \MCG(S,P')\ar[ld]^{c(-,C)}\\
& \Q &
}
\]
%By (\ref{eq:conjugation}) we have 
Therefore 
$$c([\varphi_{L}], C) = c(\gamma^{*}([\varphi_L]), C) = c([\varphi'_{L'}], C).$$
\end{proof}

When $P'=P$ the isomorphism $\gamma^{*}$ is an inner automorphism of $\MCG(S, P)$ and the commutativity implies invariance of the FDTC under conjugation, 
$c(\psi[\varphi_{L}]\psi^{-1}, C) = c([\varphi_{L}], C)$ for any $\psi\in\MCG(S,P)$.

Now we are ready to define the FDTC for a braid. 

\begin{definition}
\label{def:FDTC-braid}
Let $L$ be a closed braid with respect to an abstract open book book $(S, \varphi)$.
Suppose that $L'$ is a closed braid with respect to $(S, \varphi)$ such that $L$ and $L'$ are braid isotopic and the conditions (\ref{eqn:collar}) and (\ref{eqn:P}) are satisfied for $((S, \vphi), L')$. 
The {\em fractional Dehn twist coefficient (FDTC) of the equivalence class $\cL = [(S, \vphi), L)]$ with respect to $C$} is the FDTC of the distinguished monodromy $[\varphi_{L'}]$ with respect to $C$ and denote it by $c(\phi, \cL, C)$ (in \cite{ik2} it is denoted by $c(\phi,L,C)$). Namely, 
\[ c(\phi,\cL,C) := c([\varphi_{L'}],C).\]
\end{definition}

Thanks to Proposition~\ref{prop:isotopic-braids}, the FDTC $c(\phi,\cL,C)$ is well-defined. 

If a braid $L$ is empty we set $P=\emptyset$ and  define the distinguished monodromy $[\varphi_{L}] := [\varphi]=\phi$.  Hence the FDTC of the empty closed braid is equal to the FDTC of the monodromy of the open book.

%In practice, when we consider $c(\phi,L,C)$ it is often convenient, as done in \cite{ik2}, to take $P=p(L\cap S_0)$ so that $P$ is contained in a collar neighborhood of a single boundary component $C$ rather than the whole $\partial S$. 

\section{Quasi-right-veering maps}
\label{sec:qveer}

%From now on, we will not pay much attention to the distinction between a diffeomorphism and its corresponding mapping class. When we say ``$L$ is a closed braid with respect to an open book $(S,\phi)$'' the reader may understand that we are choosing a representative  $\varphi \in \Diff(S)$ of $\phi\in\MCG(S)$ with (\ref{eqn:collar}), and that $L$ is a closed braid with respect to the abstract open book $(S,\varphi)$. Also, when we consider the distinguished monodromy $\phi_L$ or the FDTC $c(\phi, L, C)$ we always assume $P=p(S_0\cap L) \subset \nu(\partial S)$. 

In this section we introduce a partial ordering ``$\ll_{\sf right}$'' and quasi-right-veering closed braids, then we compare right-veering and quasi-right-veering. 
We use the same notations in the previous section. 

%Thus, when we discuss a closed braid $L$ and its distinguished monodromy $\phi_L$ there exists a particular boundary component $C$ of $S$ such that $L$ has {\color{red}\marginpar{do we need this condition? or can we weaken it to $P\subset \nu(\partial S)$} $P:=p(L \cap S_{0}) \subset \nu(C)$. }

\subsection{Strongly right-veering partial ordering  ``$\ll_{\sf right}$'' }

\begin{definition}
For each boundary component $C$ of $S$, we choose a base point $\ast_{C} \in C$. Let $\mathcal{A}_{C}(S,P)$ be the set of isotopy classes of 
properly embedded arcs $\gamma:[0,1] \rightarrow S \setminus P$ satisfying $\gamma(0) = \ast_C$ and $\gamma(1)\in \partial S \setminus \{*_C\}$.  Here, by isotopy we mean isotopy fixing the end points $\gamma(0)$ and $\gamma(1)$. 
\end{definition}

%We do not allow $\gamma\in\A$ to have $\gamma(1) \in P$ but we allow $\gamma(1) \in (C\setminus \{*_C\})$. 
For simplicity of notation, an actual arc $\gamma: [0,1] \rightarrow S$ representing its isotopy class $[\gamma] \in \mathcal{A}_{C}(S,P)$ 
may be denoted by the same symbol, $\gamma$. 
We may call an element of $\mathcal{A}_{C}(S,P)$ simply an \emph{arc} $\gamma$ instead of the isotopy class of $\gamma$.  

We say that two arcs $\alpha$ and $\beta$ intersect \emph{efficiently} if they attain the minimal geometric intersection number among all the arcs isotopic to them. 

\begin{definition}[Right-veering total ordering $\prec_\ri$]
Let $\alpha,\beta \in \mathcal{A}_{C}(S,P)$.
Suppose that (arcs representing) $\alpha$ and $\beta$ intersect efficiently. 
We denote $\alpha \prec_{\sf right} \beta$ and say that $\beta$ lies on the \emph{right side} of $\alpha$ if the arc $\beta$ lies on the  right side of $\alpha$ in a small neighborhood of the base point $\ast_{C}$. 
\end{definition}

In \cite{hkm} and \cite{ik2}, where the set $P$ is empty, the symbol `$`>$'' is used in the place of  ``$\prec_{\sf right}$''. 

The order ``$\prec_{\sf right}$'' is a total ordering. 
For any family of arcs $\{ \alpha_i \} \subset \mathcal{A}_{C}(S,P)$ we can always put them in a position simultaneously so that $\alpha_i$ and $\alpha_j$ intersect efficiently for any pairs $(i, j)$. This can be done, for example, by choosing a hyperbolic metric on $S\setminus P$ and realizing the arcs as geodesics.

We introduce another ordering ``$\ll_{\sf right}$''
which plays a central role in this paper.

\begin{definition}[Strongly right-veering partial ordering $\ll_\ri$]
For arcs $\alpha,\beta \in \mathcal{A}_{C}(S,P)$, we define $\alpha \ll_{\sf right} \beta$ if there exists a sequence of arcs $\alpha_0,\dots,\alpha_k \in \A$ such that 
\begin{equation}
\label{eqn:defn-ll}
\alpha = \alpha_0 \prec_{\sf right} \alpha_1 \prec_{\sf right}  \cdots \prec_{\sf right} \alpha_k = \beta, \    \mbox{ and } 
\end{equation}
\begin{equation}
\label{eqn:defn-ll2}
 \Int(\alpha_i) \cap \Int(\alpha_{i+1})= \emptyset \mbox{ for all } i=0,\dots, k-1.
\end{equation}
\end{definition}

In the rest of the subsection, we study properties of $\ll_\ri$. 

By the definition it is easy to see that $\ll_\ri$ is a partial ordering, i.e., $\alpha\ll_\ri \beta$ and $\beta\ll_\ri\gamma$ imply $\alpha\ll_\ri\gamma$.  
If the puncture set $P$ is empty, then \cite[Lemma 5.2]{hkm} shows that the ordering $\ll_{\sf right}$ coincides with $\prec_{\ri}$. However, when $P$ is non-empty  $\ll_\ri$ is {\em not} a total ordering and there is difference between $\prec_{\ri}$ and $\ll_{\ri}$ as shown in Proposition~\ref{lemma:b-right-p-bigon}. To see the difference we first introduce the following notion.

\begin{definition}[Boundary right $P$-bigon]
\label{defn:b-right-P-bigon}
Let $\alpha,\beta \in \mathcal{A}_{C}(S,P)$ with $\alpha \prec_{\ri} \beta$. 
Assume that there exist subarcs $\delta_{\alpha} \subset \alpha$ and $\delta_{\beta} \subset \beta$ such that 
\begin{itemize}
\item
$*_C \in \delta_\alpha \cap \delta_\beta$ 
\item
$\delta_{\alpha} \cup \delta_{\beta}$ bounds a (possibly immersed) bigon $D (\subset S)$ which lies on the right side of $\alpha$ (i.e., the orientation of $\delta_{\alpha}$, as a subarc of $\alpha$, disagrees with the orientation of $\partial D$) and 
\item
$D \cap P \neq \emptyset$ ($D$ contains some marked points).  
\end{itemize}
We call such a bigon $D$ a \emph{boundary right $P$-bigon from $\alpha$ to $\beta$}.
\end{definition}

A boundary right $P$-bigon gives an obstruction for $\alpha \ll_{\ri} \beta$: 

\begin{proposition}
\label{lemma:b-right-p-bigon}
Let $\alpha,\beta \in \mathcal{A}_{C}(S,P)$ be arcs with $\alpha \prec_{\ri} \beta$. If there is a boundary right $P$-bigon from $\alpha$ to $\beta$ then $\alpha \not \ll_{\ri} \beta$.
\end{proposition}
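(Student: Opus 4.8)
The plan is to argue by contradiction: suppose $\alpha \ll_{\ri} \beta$ despite the existence of a boundary right $P$-bigon $D$ from $\alpha$ to $\beta$, and derive a contradiction by tracking what happens to the marked points trapped in $D$ along the chain of arcs realizing $\ll_\ri$. So fix a sequence $\alpha = \alpha_0 \prec_{\ri} \alpha_1 \prec_{\ri} \cdots \prec_{\ri} \alpha_k = \beta$ with $\Int(\alpha_i)\cap\Int(\alpha_{i+1}) = \emptyset$ for all $i$, as in \eqref{eqn:defn-ll}--\eqref{eqn:defn-ll2}.

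First I would set up the right bookkeeping device near the base point $\ast_C$. Because all arcs emanate from $\ast_C$, in a small disk neighborhood of $\ast_C$ the arcs $\alpha_0, \dots, \alpha_k$ leave in a definite cyclic/linear order determined by $\prec_\ri$; the boundary right $P$-bigon says that the region immediately to the right of $\alpha$ and to the left of $\beta$, bounded near $\ast_C$ by the initial subarcs $\delta_\alpha, \delta_\beta$, contains marked points. The key local observation is that when $\Int(\alpha_i) \cap \Int(\alpha_{i+1}) = \emptyset$ and $\alpha_i \prec_\ri \alpha_{i+1}$, the two arcs cobound a well-defined (embedded) region $R_i$ lying on the right of $\alpha_i$ — more precisely, $\alpha_i$ and $\alpha_{i+1}$ together with a subarc of $C$ bound an embedded subsurface — because disjoint interiors plus a common starting point forces the arcs to be "nested" rather than linked. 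I would make this precise by passing to minimal position (realizing everything as geodesics for a fixed hyperbolic metric, as the paper already suggests) so that "efficiently intersecting" and "disjoint interiors" have their literal meanings.

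The main step is then a monotonicity/additivity statement for the marked points: if $P(\alpha_i, \alpha_{i+1})$ denotes the number (or the set, counted with the nesting structure) of marked points lying in the region cobounded on the right by consecutive arcs, then the union of the regions $R_0, \dots, R_{k-1}$ is exactly (isotopic to) the region cobounded by $\alpha$ and $\beta$, i.e. the $P$-bigon region $D$ up to the boundary collar. Since $D$ contains a marked point $p \in P$, that point lies in some single $R_i$, so $\Int(\alpha_i)$ and $\Int(\alpha_{i+1})$ cobound a region containing a marked point while having disjoint interiors and a common endpoint $\ast_C$. The contradiction is extracted from this one step: an embedded "right region" between $\alpha_i$ and $\alpha_{i+1}$ that contains a point of $P$ cannot have the two arcs disjoint in their interiors \emph{and} properly embedded avoiding $P$ — one shows that to separate $p$ from $\ast_C$ along these two arcs the arcs must either re-intersect or one of them must pass through $p$, both forbidden. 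Concretely, $\alpha_{i+1}$ would have to enter the region to the right of $\alpha_i$ containing $p$ and come back, but since they start together at $\ast_C$ and $\alpha_{i+1}$ lies strictly to the right there, disjointness of interiors pins $\alpha_{i+1}$ into a collar of $\alpha_i \cup C$ that misses $p$.

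I expect the main obstacle to be the "additivity of the trapped marked points over the chain" claim — making rigorous that the regions $R_i$ concatenate to give $D$ (up to isotopy and boundary-parallel modifications) rather than overlapping or canceling in some way that could hide the marked point. The subtlety is that $D$ is only \emph{immersed} in Definition~\ref{defn:b-right-P-bigon}, so I would first reduce to an innermost embedded sub-bigon still containing a marked point (an innermost-disk / surgery argument on the immersed bigon), and then run the chain argument against that embedded bigon; alternatively one argues at the level of the universal cover of $S \setminus P$, where each $\alpha_i$ lifts to a line, the condition $\alpha_i \prec_\ri \alpha_{i+1}$ with disjoint interiors means consecutive lifts are disjoint and correctly ordered, and the lifts of the $P$-bigon's marked points provide the obstruction directly via a nesting argument on half-planes. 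I would present whichever of these is cleanest, likely the innermost-bigon reduction followed by the one-step local contradiction.
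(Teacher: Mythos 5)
Your overall plan (push the obstruction down the chain to a consecutive pair and contradict $\Int(\alpha_i)\cap\Int(\alpha_{i+1})=\emptyset$) has the same shape as the paper's argument, but the two claims you lean on are exactly where the content of the proposition sits, and neither holds as stated. First, the ``additivity'' claim that the right-regions $R_0,\dots,R_{k-1}$ cobounded by consecutive arcs tile, or even cover, the bigon $D$ is unjustified and false in general: the ordering $\prec_{\ri}$ only constrains arcs in a small neighborhood of $\ast_C$, so the intermediate arcs $\alpha_i$ need not stay inside $D$, non-consecutive arcs of the chain may intersect one another and $\alpha,\beta$ arbitrarily, and each $R_i$ is in general bounded partly by $\partial S$ (indeed, two arcs with disjoint interiors and distinct far endpoints cannot cobound a bigon at all). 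Second, and more seriously, your one-step local contradiction is false: two arcs of $\A$ meeting only at $\ast_C$ can perfectly well cobound, together with an arc of $\partial S$, a region on the right of the first arc containing points of $P$ --- this is exactly what happens when an arc is moved rightward past a puncture ``along the boundary'' --- and nothing forces the arcs to re-intersect or to pass through $P$; such a pair genuinely satisfies $\alpha_i\ll_{\ri}\alpha_{i+1}$. The contradiction with disjointness appears only when the trapped region is an honest bigon, bounded by subarcs of the two arcs alone, because then its second corner is an interior intersection point of the two arcs. Producing such a bigon between some consecutive pair is precisely the missing step, and the innermost-bigon and universal-cover remarks you sketch do not supply it.

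The paper closes this gap by propagating the bigon itself rather than a count of trapped marked points: any $\gamma$ with $\alpha\prec_{\ri}\gamma\prec_{\ri}\beta$ enters $D$ near $\ast_C$, hence cuts $D$, and the piece of $D$ that retains a marked point gives a boundary right $P$-bigon either from $\alpha$ to $\gamma$ or from $\gamma$ to $\beta$; applying this inductively along the chain produces a consecutive pair $\gamma_i\prec_{\ri}\gamma_{i+1}$ cobounding a boundary right $P$-bigon, whose second corner forces $\Int(\gamma_i)\cap\Int(\gamma_{i+1})\neq\emptyset$, the desired contradiction. If you replace ``region containing marked points'' by ``boundary right $P$-bigon'' throughout, your argument essentially becomes the paper's proof; as written, the tiling claim and the local contradiction for a mere right-region constitute a genuine gap.
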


\begin{proof}
If there is a boundary right $P$-bigon $D$ from $\alpha$ to $\beta$ then every arc $\gamma \in \A$ that satisfies $\alpha \prec_{\ri} \gamma \prec_{\ri} \beta$ must intersect $D$ and yields either a boundary right $P$-bigon from $\alpha$ to $\gamma$ or from $\gamma$ to $\beta$ (see Figure \ref{fig:porder} (a)). 
Thus, for any sequence of arcs $\alpha=\gamma_0 \prec_{\ri} \gamma_1 \prec_{\ri} \cdots \prec_{\ri} \gamma_n = \beta$ there exists an $i \in \{0,\dots, n-1\}$ such that $\gamma_{i}$ and $\gamma_{i+1}$ forms a boundary right $P$-bigon, which means $\Int(\gamma_{i})$ and $\Int(\gamma_{i+1})$ cannot be disjoint.
\end{proof}

\begin{figure}[htbp]
\begin{center}
\includegraphics*[bb=145 612 449 716,width=90mm]{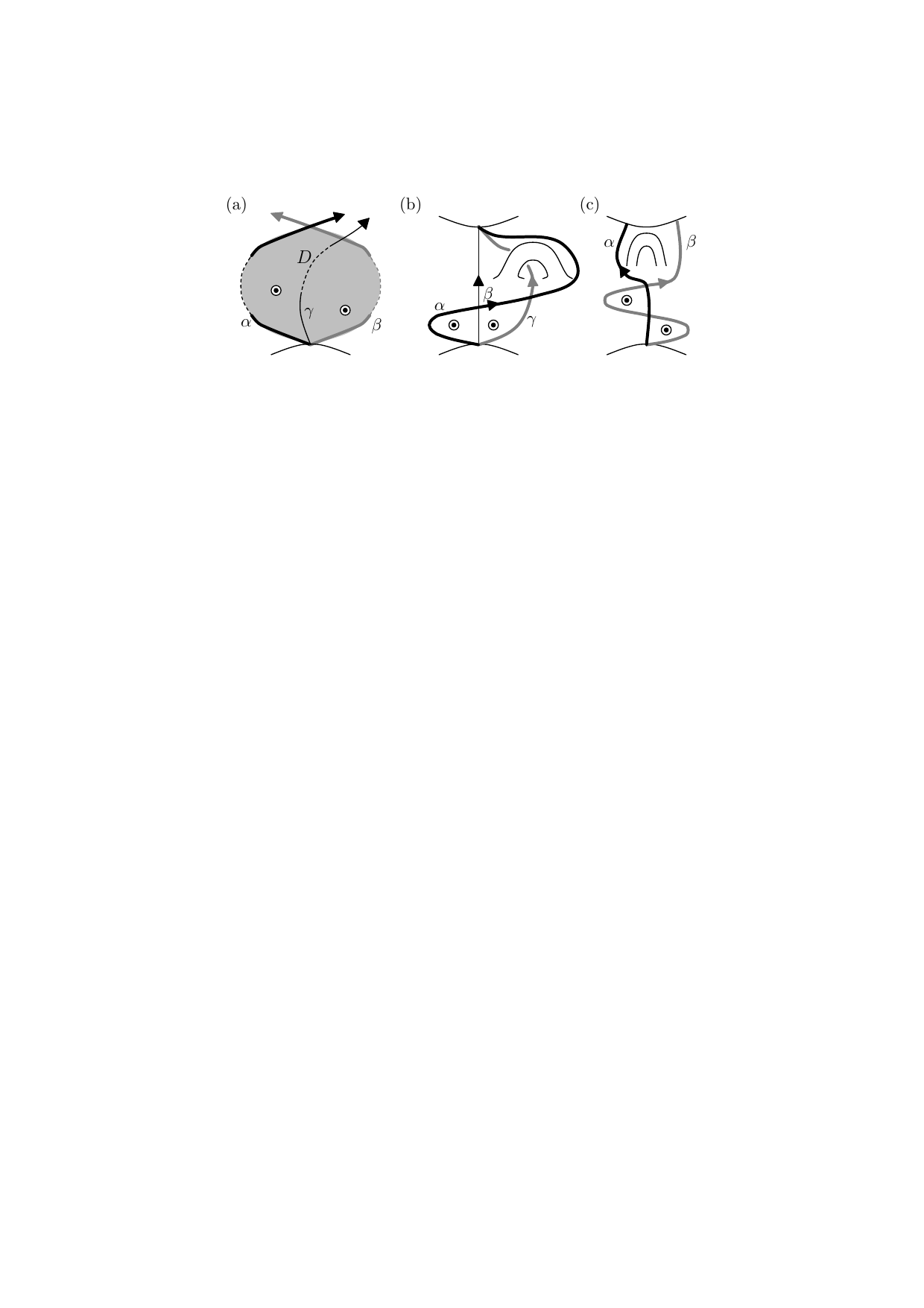}
\caption{
(a) The arc $\gamma$ with $\alpha \prec_{\ri} \gamma \prec_{\ri} \beta$ cuts the boundary right $P$-bigon $D$, yielding a boundary right $P$-bigon from $\beta$ to $\gamma$. \\
(b) $\alpha \prec_{\sf right} \beta$ and $\beta \ll_{\sf right} \gamma$, but  $\alpha \not \ll_{\sf right} \gamma$. \\
(c) $f(\alpha) \prec_{\ri} f(\beta)$ and  $\alpha \prec_{\ri} \beta$, but $\alpha \not \ll_{\ri} \beta$.}
\label{fig:porder}
\end{center}
\end{figure}

%We now clearly see the difference between $\prec_{\ri}$ and $\ll_{\ri}$. 
As a corollary, we observe that 
conditions $\alpha \prec_{\sf right} \beta$ and $\beta \ll_{\sf right} \gamma$  may \emph{not} imply $\alpha \ll_{\sf right} \gamma$ in general.  
(Also $\alpha \ll_{\sf right} \beta$ and $\beta \prec_{\sf right} \gamma$ may not imply $\alpha \ll_{\sf right} \gamma$.)
For example, the arcs depicted in Figure \ref{fig:porder} (b) satisfy $\alpha \prec_{\sf right} \beta$ and $\beta \ll_{\sf right} \gamma$ but by Proposition~\ref{lemma:b-right-p-bigon} $\alpha\not\ll_\ri\gamma$.

We conjecture the converse of Proposition~\ref{lemma:b-right-p-bigon}: 
 
\begin{conjecture}
\label{conj:qveer}
We have 
$\alpha \ll_{\ri} \beta$ if and only if $\alpha \prec_{\ri} \beta$ and there exist no boundary right $P$-bigons from $\alpha$ to $\beta$.
\end{conjecture}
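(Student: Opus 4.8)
\textbf{Proof plan for Conjecture~\ref{conj:qveer}.}
The plan is to prove the non-trivial direction: assuming $\alpha \prec_{\ri}\beta$ and that there is no boundary right $P$-bigon from $\alpha$ to $\beta$, we want to produce a finite chain $\alpha=\alpha_0\prec_{\ri}\cdots\prec_{\ri}\alpha_k=\beta$ with $\Int(\alpha_i)\cap\Int(\alpha_{i+1})=\emptyset$ for all $i$. (Proposition~\ref{lemma:b-right-p-bigon} already gives the forward direction.) First I would put $\alpha$ and $\beta$ in efficient position, say as geodesics in a hyperbolic metric on $S\setminus P$, and analyze the components of $S$ cut along $\alpha\cup\beta$. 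The key structural fact I would isolate is this: if $\alpha$ and $\beta$ are not already disjoint on their interiors, then since there is no boundary right $P$-bigon, the \emph{innermost} region cut off near $\ast_C$ on the right of $\alpha$ is an honest (embedded) bigon $D_0$ bounded by a subarc of $\alpha$ and a subarc of $\beta$ that contains \emph{no} marked points of $P$. I would then push $\alpha$ across $D_0$ to obtain an arc $\alpha'$ with $\alpha\prec_{\ri}\alpha'\preceq_{\ri}\beta$, with $\alpha'$ having strictly fewer intersections with $\beta$, and with $\Int(\alpha)\cap\Int(\alpha')=\emptyset$ — so $\alpha\ll_{\ri}\alpha'$ in one step. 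The point is that no boundary right $P$-bigon from $\alpha$ to $\beta$ guarantees that no boundary right $P$-bigon appears from $\alpha'$ to $\beta$ either (a $P$-bigon upstairs would pull back to one for $\alpha$), so the hypothesis is preserved under this move.

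With that lemma in hand the proof becomes an induction on the geometric intersection number $i(\alpha,\beta)$ (more precisely on the number of interior intersection points of the chosen efficient representatives). The base case $i(\alpha,\beta)=0$ is immediate: $\alpha\ll_{\ri}\beta$ in a single step. For the inductive step, the bigon-pushing move above replaces $\alpha$ by $\alpha'$ with $\alpha\ll_{\ri}\alpha'$, $\alpha'\prec_{\ri}\beta$ or $\alpha'=\beta$, $i(\alpha',\beta)<i(\alpha,\beta)$, and no boundary right $P$-bigon from $\alpha'$ to $\beta$; by induction $\alpha'\ll_{\ri}\beta$, and since $\ll_{\ri}$ is transitive (noted right after its definition) we conclude $\alpha\ll_{\ri}\beta$. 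I would also need to handle the mild technical point that $\alpha$ and $\beta$ may share the endpoint behavior $\gamma(1)\in C$ or share subarcs along $C$; this is dealt with exactly as in \cite[Lemma 5.2]{hkm}, since those degenerations do not involve marked points.

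The main obstacle I expect is verifying that the innermost right-side region near $\ast_C$ is genuinely a \emph{disk} bigon disjoint from $P$ in its interior, and that pushing across it does not create a new right-side $P$-bigon further out — in other words, that "no boundary right $P$-bigon from $\alpha$ to $\beta$" is the exactly right hypothesis to run this induction and is inherited by $(\alpha',\beta)$. The subtlety is that the bigons cut off by $\alpha\cup\beta$ on the right of $\alpha$ need not be nested in an obvious way once $P$ is present, and an immersed $P$-bigon could in principle hide among embedded sub-bigons; ruling this out requires a careful innermost-disk argument in $S\setminus P$, tracking how the arc of $\beta$ bounding $D_0$ sits relative to the marked points, much in the spirit of the bigon criterion for curves on punctured surfaces. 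Once that local picture is pinned down, the rest is the routine induction sketched above. (Figure~\ref{fig:porder}(a) already contains the germ of why a $P$-bigon obstructs the chain; here we are arguing the converse — that its \emph{absence} lets the chain be built step by step by clearing one embedded $P$-free bigon at a time.)
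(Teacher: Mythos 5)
The statement you are proving is not proved in the paper at all: it is stated as Conjecture~\ref{conj:qveer} and left open (only the ``only if'' direction is established, as Proposition~\ref{lemma:b-right-p-bigon}). So your proposal must be judged as an attempt at an open problem, and as it stands it is a strategy sketch with an unresolved central step, not a proof.

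The gap is exactly the one you flag as ``the main obstacle,'' and it is not a technicality. Your induction needs, at each stage, an embedded bigon $D_0$ cobounded by subarcs of $\alpha$ and $\beta$, disjoint from $P$, lying on the right of $\alpha$. But once you realize $\alpha$ and $\beta$ efficiently in $S\setminus P$ (e.g.\ as geodesics), the bigon criterion forbids any interior bigon disjoint from $P$; the only possible puncture-free bigon is the ``half-bigon'' at the basepoint, i.e.\ the region adjacent to $\ast_C$ between $\alpha|_{[0,t_k]}$ and $\beta|_{[0,s_1]}$ where $q_1=\alpha(t_k)=\beta(s_1)$ is the first intersection of $\beta$ with $\alpha$. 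There is no reason for that region to be a disk: the simple closed curve $\alpha|_{[0,t_k]}\ast(\beta|_{[0,s_1]})^{-1}$ can be essential in $S\setminus P$, in which case the component of $S\setminus(\alpha\cup\beta)$ at $\ast_C$ has genus or extra boundary and there is nothing to push across, so the induction never starts. The hypothesis ``no boundary right $P$-bigon'' only tells you that \emph{if} such a region is an (immersed) bigon then it misses $P$; it does not produce a bigon. A second, lesser gap is heredity: you assert that a boundary right $P$-bigon from $\alpha'$ to $\beta$ would ``pull back'' to one from $\alpha$ to $\beta$, but since Definition~\ref{defn:b-right-P-bigon} allows immersed bigons and the corner must remain at $\ast_C$, this gluing argument needs to be carried out, not just asserted. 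Until the non-disk case is handled (or shown not to occur under the hypotheses), the conjecture remains open and your argument does not close it.
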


We study more properties of $\ll_\ri$:

\begin{lemma}\label{lemma:f}
Let $f: \mathcal{A}_{C}(S,P) \rightarrow \mathcal{A}_{C}(S)$ be the forgetful map. % induced by the obvious inclusion $S \setminus P \hookrightarrow S$. 
If $\alpha \ll_\ri \beta$ in $\A$ then we have $f(\alpha) \prec_\ri f(\beta)$ in $\mathcal A_C(S)$. 
\end{lemma}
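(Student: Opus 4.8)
The plan is to transport the defining chain of $\ll_\ri$ through the forgetful map $f$. By the definition of $\ll_\ri$, the hypothesis $\alpha \ll_\ri \beta$ supplies arcs $\alpha = \alpha_0 \prec_\ri \alpha_1 \prec_\ri \cdots \prec_\ri \alpha_k = \beta$ in $\A$ with $\Int(\alpha_i) \cap \Int(\alpha_{i+1}) = \emptyset$ for every $i$. The point is that $f$ is induced by the ``identity'': a chosen representative of $\alpha_i$ in $S \setminus P$ is literally a representative of $f(\alpha_i)$ in $S$, it is unchanged in a small neighborhood of $\ast_C$, and disjointness of interiors upstairs forces disjointness of interiors downstairs. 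So first I would fix, for each $i$, disjoint representatives of $\alpha_i$ and $\alpha_{i+1}$ (which exists since $\Int(\alpha_i)\cap\Int(\alpha_{i+1})=\emptyset$) and read them off as representatives of $f(\alpha_i),f(\alpha_{i+1})$ in $S$.

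Since these downstairs representatives are disjoint, they realize geometric intersection number $0$ and hence intersect efficiently in $\mathcal{A}_C(S)$; and since nothing has moved near $\ast_C$, the side on which $f(\alpha_{i+1})$ leaves $\ast_C$ relative to $f(\alpha_i)$ is the same as for $\alpha_{i+1}$ relative to $\alpha_i$. Consequently, for each $i$, either $f(\alpha_i) = f(\alpha_{i+1})$ or $f(\alpha_i) \prec_\ri f(\alpha_{i+1})$ in $\mathcal{A}_C(S)$, and $f(\alpha_{i+1}) \prec_\ri f(\alpha_i)$ cannot occur. Because $\prec_\ri$ is a total order on $\mathcal{A}_C(S)$, concatenating these comparisons already yields $f(\alpha) \prec_\ri f(\beta)$ or $f(\alpha) = f(\beta)$.

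To upgrade to the strict inequality, I would rule out $f(\alpha_i)=f(\alpha_{i+1})$ at each step. If $f(\alpha_i) = f(\alpha_{i+1})$ then $\alpha_i$ and $\alpha_{i+1}$ share both endpoints, and being disjoint, the closed curve $\alpha_i \cup \alpha_{i+1}$ bounds a bigon $D \subset S$ having a corner at $\ast_C$. Since $\alpha_i \prec_\ri \alpha_{i+1}$, the arc $\alpha_{i+1}$ lies on the right side of $\alpha_i$, so $D$ lies on the right side of $\alpha_i$; and $D$ must contain a marked point of $P$, for otherwise $D$ would be a bigon in $S \setminus P$ between $\alpha_i$ and $\alpha_{i+1}$, contradicting efficiency (recall $\alpha_i \ne \alpha_{i+1}$ in $\A$). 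Thus $D$ is a boundary right $P$-bigon from $\alpha_i$ to $\alpha_{i+1}$, and Proposition~\ref{lemma:b-right-p-bigon} gives $\alpha_i \not\ll_\ri \alpha_{i+1}$, contradicting $\alpha_i \ll_\ri \alpha_{i+1}$ (which holds because $\alpha_i \prec_\ri \alpha_{i+1}$ and their interiors are disjoint). Hence each $f(\alpha_i) \prec_\ri f(\alpha_{i+1})$ strictly, so $f(\alpha) \prec_\ri f(\beta)$.

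I expect the only genuinely delicate points to be (i) the comparison at $\ast_C$ after forgetting the punctures — two disjoint arcs from $\ast_C$ that are non-isotopic in $S \setminus P$ may become isotopic in $S$, but they cannot swap the side on which they emanate from $\ast_C$, and this needs to be stated carefully — and (ii) the strictness argument above, which relies on the boundary right $P$-bigon obstruction of Proposition~\ref{lemma:b-right-p-bigon}. Everything else is a mechanical unwinding of the definition of $\ll_\ri$, which is why the statement follows easily.
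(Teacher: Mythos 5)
Your proof is correct and takes essentially the same route as the paper's: push the defining chain of $\ll_{\ri}$ through the forgetful map, noting that disjointness of interiors persists and that the side of departure at $\ast_C$ is unchanged, then concatenate using totality of $\prec_{\ri}$ on $\mathcal{A}_C(S)$.

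The one place where you do substantively more than the paper is the strictness step. The paper dismisses the possibility $f(\alpha_i)=f(\alpha_{i+1})$ with the single remark that arcs with disjoint interiors ``do not even cobound marked bigons,'' whereas you rule it out by exhibiting a boundary right $P$-bigon (necessarily with its second corner at the common endpoint $\alpha_i(1)=\alpha_{i+1}(1)\in\partial S$, since the interiors are disjoint) and invoking Proposition~\ref{lemma:b-right-p-bigon}. That invocation is consistent with Definition~\ref{defn:b-right-P-bigon} as literally written, so your argument is formally valid given that proposition. Be aware, though, that the proof of Proposition~\ref{lemma:b-right-p-bigon} concludes by asserting that a boundary right $P$-bigon forces the interiors of the two arcs to intersect --- which is exactly what fails for the degenerate, shared-endpoint bigon you construct. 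So your contradiction is being extracted from that proposition precisely in the configuration where its proof (and, correspondingly, the paper's one-line dismissal) is thinnest; you have correctly identified this as the only delicate point of the lemma, but the appeal to Proposition~\ref{lemma:b-right-p-bigon} does not fully discharge it so much as relocate it.
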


\begin{proof}
Since $\alpha \ll_{\ri} \beta$, there is a sequence of arcs $\alpha=\gamma_0 \prec_{\ri} \gamma_1 \prec_{\ri} \cdots \prec_{\ri} \gamma_n = \beta$ in $\mathcal{A}_{C}(S,P)$ with $\Int(\gamma_{i}) \cap \Int(\gamma_{i+1})=\emptyset$ for all $i$. 
This implies that  
$\gamma_i$ and $\gamma_{i+1}$ do not cobound any marked bigons.
Therefore, $\Int (f(\gamma_{i})) \cap \Int(f(\gamma_{i+1}))=\emptyset$ and we can conclude
$f(\alpha)=f(\gamma_0) \prec_{\ri} f(\gamma_1) \prec_{\ri} \cdots \prec_{\ri} f(\gamma_n) = f(\beta)$ in $\mathcal A_C(S)$; that is, $f(\alpha) \prec_{\ri} f(\beta)$ in $\mathcal A_C(S)$.
\end{proof}

\begin{remark-no-number}
The converse of Lemma~\ref{lemma:f} does not hold in general, even if we assume $\alpha \prec_{\ri} \beta$. See Figure \ref{fig:porder} (c). 
\end{remark-no-number}

The next proposition gives a sufficient condition for $\alpha\ll_\ri\beta$. 

\begin{proposition}
\label{proposition:sufficient-condition}
Let $\alpha,\beta \in \mathcal{A}_{C}(S,P)$ be arcs with $\alpha \prec_{\ri} \beta$. If $\alpha$ and $\beta$ do not cobound bigons with marked points %(namely, if $\alpha$ and $\beta$, viewed as arcs in the non-punctured surface $S$, intersect efficiently), 
then $\alpha \ll_\ri \beta$.
\end{proposition}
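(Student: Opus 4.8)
\textbf{Proof proposal for Proposition~\ref{proposition:sufficient-condition}.}

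The plan is to build an explicit finite sequence of arcs interpolating from $\alpha$ to $\beta$, each consecutive pair having disjoint interiors, and each step moving strictly to the right. First I would fix efficient representatives of $\alpha$ and $\beta$ in the punctured surface $S \setminus P$; by hypothesis these are also efficient in $S$, so the geometric intersection points of $\alpha$ and $\beta$ are genuine (no bigons to remove, marked or unmarked). Let $k = |\Int(\alpha) \cap \Int(\beta)|$ be this minimal intersection number, and induct on $k$. The base case $k=0$ is immediate: $\Int(\alpha)\cap\Int(\beta)=\emptyset$ and $\alpha\prec_\ri\beta$ already give $\alpha\ll_\ri\beta$ with the two-term sequence.

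For the inductive step, I would use the standard surgery/swap move on the first intersection point. Starting from $\ast_C$, travel along $\beta$ until the first interior intersection point $q$ with $\alpha$; let $\delta_\beta \subset \beta$ be the initial subarc from $\ast_C$ to $q$ and $\delta_\alpha \subset \alpha$ the initial subarc from $\ast_C$ to $q$. Since $\alpha\prec_\ri\beta$ near $\ast_C$ and there are no bigons, the concatenation $\delta_\beta \cup (\text{rest of }\alpha\text{ after }q)$ — after a small pushoff of $q$ — is a properly embedded arc $\alpha'$ that lies strictly to the right of $\alpha$ near $\ast_C$, hence $\alpha \prec_\ri \alpha'$, and which has strictly fewer than $k$ intersections with $\beta$ (one can check the pushoff removes $q$ and does not create new intersections, possibly after an isotopy that does not increase intersection number). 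The key point, and the step I expect to be the main obstacle, is verifying that $\Int(\alpha)\cap\Int(\alpha') = \emptyset$: the piece of $\alpha'$ coming from $\alpha$ after $q$ obviously does not meet $\Int(\alpha)$, so one must argue that the piece $\delta_\beta$ can be isotoped off $\Int(\alpha)$ rel endpoints — equivalently, that $\delta_\beta$ and the initial segment $\delta_\alpha$ of $\alpha$ cobound an embedded bigon (containing no marked points, by the efficiency hypothesis), which can be collapsed. This is exactly where the ``no marked bigons'' assumption is used: without it the bigon between $\delta_\alpha$ and $\delta_\beta$ could swallow a puncture and obstruct the collapse, and indeed Proposition~\ref{lemma:b-right-p-bigon} shows the conclusion can fail.

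Granting that bigon-collapse, $\alpha'$ satisfies $\alpha \prec_\ri \alpha'$, $\Int(\alpha)\cap\Int(\alpha')=\emptyset$, and $\alpha'$ and $\beta$ intersect efficiently in $S$ (no new bigons were created) with $|\Int(\alpha')\cap\Int(\beta)| < k$. If $\alpha' \prec_\ri \beta$ we apply the inductive hypothesis to get $\alpha' \ll_\ri \beta$ and prepend the single step $\alpha \prec_\ri \alpha'$ with disjoint interiors to conclude $\alpha \ll_\ri \beta$. I would also check the degenerate possibility that $\alpha'$ has reached or overshot $\beta$ (e.g.\ $\alpha' = \beta$, or the first intersection point $q$ was in fact the terminal point region), which only shortens the sequence; a clean way to handle all cases uniformly is to observe that after the swap the new first intersection point of $\alpha'$ with $\beta$ is strictly deeper along $\beta$, so finitely many swaps exhaust all intersections and land exactly on $\beta$. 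Finally I would remark that each intermediate arc genuinely lies in $\A$ — it is properly embedded, starts at $\ast_C$, avoids $P$, and may end on $C$ — so the sequence is a legitimate witness for $\alpha \ll_\ri \beta$.
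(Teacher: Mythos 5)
Your overall strategy---resolve the first intersection point of $\beta$ with $\alpha$ along $\beta$ and induct on the geometric intersection number---is exactly the paper's, which simply invokes the interpolation construction from the proof of \cite[Lemma 5.2]{hkm} and iterates it. So the architecture is right. However, the justification you give for what you yourself single out as ``the key point'' is not correct, and it misplaces where the no-marked-bigon hypothesis actually enters.

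You claim that to get $\Int(\alpha)\cap\Int(\alpha')=\emptyset$ one must show that $\delta_\alpha$ and $\delta_\beta$ cobound an embedded unmarked bigon and collapse it. Three problems. First, the loop $\delta_\alpha\cup\delta_\beta$ need not bound a disk in $S$ at all, so the bigon you want to collapse generally does not exist. Second, no collapse is needed: since $q$ is the \emph{first} intersection point along $\beta$, the interior of $\delta_\beta$ is already disjoint from $\alpha$, and the remaining piece of $\alpha'$ is a small pushoff of a subarc of $\alpha$; disjointness of interiors is automatic. Third, and most seriously, if $\delta_\alpha$ and $\delta_\beta$ \emph{did} cobound an unmarked bigon, collapsing it would exhibit an isotopy from $\alpha'$ to $\alpha$ in $S\setminus P$, so your step would fail to be a strict move $\alpha\prec_\ri\alpha'$ and the induction would stall. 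The hypothesis is needed for precisely the opposite purpose: efficiency in $S\setminus P$ excludes unmarked bigons between $\delta_\alpha$ and $\delta_\beta$, and the no-marked-bigon assumption excludes marked ones, so \emph{no} bigon exists; this is what guarantees that $\alpha'$ is a genuinely new element of $\mathcal{A}_{C}(S,P)$ with $\alpha\prec_\ri\alpha'$, and likewise that $\alpha'$ and $\beta$ again cobound no marked bigons, so the inductive hypothesis applies to the new pair. Relatedly, you leave ``$\alpha'\prec_\ri\beta$'' as a conditional, but this must be verified (it is part of the HKM construction), not assumed. With these repairs your argument coincides with the paper's.
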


\begin{proof}

If $\alpha$ and $\beta$ do not cobound bigons with marked points  then following the proof of \cite[Lemma 5.2]{hkm} one can construct an arc $\gamma \in \A$ such that $\alpha \prec_{\ri} \gamma \prec_{\ri} \beta$ with $\#(\alpha,\gamma) < \#(\alpha,\beta)$ and $ \#(\gamma,\beta)<\#(\alpha,\beta)$. Here $\#(-,-)$ denotes the geometric intersection number of the interiors of the two arcs.
Moreover, the construction of $\gamma$ shows that $\alpha$ and $\gamma$ ($\gamma$ and $\beta$) do not cobound bigons with marked points. Thus iterating this interpolation process, we get a sequence of arcs satisfying the conditions (\ref{eqn:defn-ll}) and (\ref{eqn:defn-ll2}).
\end{proof}

\subsection{Definition of quasi-right-veering}

The mapping class group $\MCG(S, P)$ acts on the set $\A$ naturally. Let $\phi\in\MCG(S,P)$ be represented by $\vphi\in\Diff(S,P,\partial S)$ and $\alpha\in\A$ be represented by an arc $a\in S\setminus P$. Then $\phi(\alpha)$ denotes the isotopy class of the arc $\vphi(a)$.

Naturally extending the notion of right-veering mapping classes in \cite{hkm}, we define the following, cf. \cite[p.949]{bvv}:

\begin{definition}[Right-veering] 
\label{defn:right-veering}
We say that 
$\psi \in \MCG(S,P)$ is \emph{right-veering} with respect to the boundary component $C$ if $\alpha \prec_{\sf right} \psi(\alpha)$ or $\alpha = \psi(\alpha)$ for every $\alpha \in \mathcal{A}_{C}(S,P)$.
\end{definition}

\begin{remark}
In \cite{bg,pl}, a slightly different definition of ``right-veering'' is used. In Section~\ref{sec:comparison} we discuss the relationship between these two superficially different notions of right-veering.
\end{remark}

Since $\prec_{\sf right}$ is a total ordering on the set $\mathcal{A}_{C}(S,P)$, $\psi\in MCG(S,P)$ is right-veering if and only if $\psi(\alpha) \not\prec_{\sf right} \alpha$ for every $\alpha \in \mathcal{A}_{C}(S,P)$. 

Hinted at this alternative definition of right-veering, we introduce quasi-right-veering mapping classes. 

\begin{definition}[Quasi-right-veering]\label{defn:qveer}
We say that $\psi \in \MCG(S,P)$ is \emph{quasi-right-veering} with respect to the boundary component $C$ of $S$ if every  arc $\alpha \in \mathcal{A}_{C}(S,P)$ satisfies $\psi(\alpha) \not\ll_{\sf right} \alpha$.
(Warning: Since ``$\ll_{\sf right}$'' is not a total ordering, $\psi(\alpha) \not\ll_{\sf right} \alpha$ is not equivalent to $\alpha  \ll_{\sf right} \psi(\alpha)$ or $\alpha  = \psi(\alpha)$.)

%\item
%Let $L$ be a closed braid with respect to an open book $(S,\phi)$ {\color{red} satisfying $p(L\cap S_0)\subset \nu(\partial S)$. }
%We say that $L$ is \emph{quasi-right-veering} with respect to a boundary component $C$ if its distinguished monodromy $\phi_{L} \in \MCG(S,P)$ is quasi-right-veering with respect to $C$.

%\item
%We say that $L$ is \emph{quasi-right-veering} if $L$ is quasi-right-veering with respect to every boundary component of $S$. 
%\end{itemize}
\end{definition}

We note that the definitions of ``right-veering'' and ``quasi-right-veering'' are independent of a choice of the distinguished point $*_C$. 

We show that for a distinguished monodromy of closed braids, being (quasi)-right-veering is well-defined.

\begin{proposition}
\label{prop:well-defined}
Let $L$ (resp. $L'$) be a closed braid with respect to an abstract open book $(S,\varphi)$ (resp. $(S, \vphi')$) satisfying $P=p(L \cap S_0) \subset \nu(\partial S)$  (resp. $P'=p(L' \cap S_{0}) \subset \nu(\partial S)$).  

Suppose that $((S, \vphi), L)$ and $((S, \vphi'), L')$ are equivalent.  
Then for every boundary component $C$ of $S$, the distinguished monodromy $[\vphi_L]$ is right-veering (resp. quasi-right-veering) with respect to $C$ if and only if the distinguished monodromy $[\vphi'_{L'}]$ is right-veering (resp. quasi-right-veering) with respect to $C$.
\end{proposition}

\begin{proof}
A diffeomorphism $\theta \in \Diff(S,\partial S)$ induces a map $\theta_{*}: \mathcal{A}_{C}(S,P) \rightarrow \mathcal{A}_C(S,\theta(P))$. 
By definition of $\prec_{\sf right}$ and $\ll_{\sf right}$, both $\prec_{\sf right}$ and $\ll_{\sf right}$ are preserved by $\theta_{*}$. That is, $\alpha \prec_{\sf right} \beta$ (resp. $\alpha \ll_{\sf right} \beta$) if and only if $\theta_{*}(\alpha)  \prec_{\sf right} \theta_{*}(\beta)$ (resp. $\theta_*(\alpha) \ll_{\sf right} \theta_*(\beta)$).

This implies that, if $\Theta: \MCG(S,P) \rightarrow \MCG(S,P')$ is a point-changing isomorphism (Definition~\ref{def:point-changing}) then  $\phi \in \MCG(S,P)$ is right-veering (resp. quasi-right-veering) if and only if $\Theta(\phi) \in \MCG(S,P')$ is right-veering (resp. quasi-right-veering). 
By Theorem~\ref{thm:well-defined-up-to-point},  %Proposition \ref{prop:isotopic-braids-1}, 
this means that the distinguished monodromy $[\vphi_L]$ is right-veering (resp. quasi-right-veering) if and only if $[\vphi'_{L'}]$ is right-veering (resp. quasi-right-veering).
\end{proof}

%\marginpar{\tiny one definition was removed} 

%\begin{definition}
%Let $(S,\phi)$ be an open book decomposition of $M$, which we mean that there is an abstract open book $(S,\varphi)$ with $[\varphi]=\phi$ and that $M_{(S,\varphi)}$ is diffeomorphic to $M$.

%We say that an oriented link in $L$ in $M$ is a closed braid with respect to an open book decomposition $(S,\phi)$, if one may choose a diffeomorphism $\tau:M \rightarrow M_{(S,\varphi)}$ so that $\tau()$

%As is well-known, $M_{(S,\varphi)}$ admits a contact structure $\xi_{(S,\varphi)}$ which is unique up to isotopy. We say that $(S,\phi)$ is an open book decomposition of $(M,\xi)$ if $(M_{(S,\varphi)},\xi_{(S,\varphi)})$ is contactomorphic to $(M,\xi)$ 
%\end{definition}

%By the definition, it is easy to see that if closed braids $L$ and $L'$ with respect to the same abstract open book $(S, \vphi)$ are braid isotopic then $L$ and $L'$ are equivalent. 

Now we define a (quasi)-right-veering closed braid which is a central object in the paper.

\begin{definition}[Right-veering/quasi-right-veering closed braid]
Let $C$ be a boundary component of $S$. 
Let $L$ be a closed braid with respect an abstract open book $(S, \vphi)$. 
We say that
the closed braid $\cL:= [(S, \vphi), L)]$ with respect to the open book $(S,\phi)$ is 
\begin{itemize}
\item[--]
{\em right-veering with respect to $C$} if there exists a closed braid $L'$ with respect to $(S, \vphi)$ that represents $\cL$ such that the conditions (\ref{eqn:collar}) and (\ref{eqn:P}) are satisfied and  $[\vphi_{L'}] \in \MCG(S,P)$ is right-veering with respect to $C$. 

\item[--]
\emph{quasi-right-veering with respect to $C$} if there exists a closed braid $L'$ with respect to $(S, \vphi)$ that represents $\cL$ such that the conditions (\ref{eqn:collar}) and (\ref{eqn:P}) are satisfied and  $[\phi_{L'}] \in \MCG(S,P)$ is quasi-right-veering with respect to $C$.   
 
\item[--] 
\emph{right-veering} if $\cL$ is right-veering with respect to every boundary component of $S$. 

\item[--] 
\emph{quasi-right-veering} if $\cL$ is quasi-right-veering with respect to every boundary component of $S$. 
\end{itemize}   
Well-defindness follows by Proposition \ref{prop:well-defined}. 
\end{definition}

\subsection{Comparison of quasi-right-veering and right-veering}\label{subsection:QRV-RV}

In this section, we discuss relation (Proposition~\ref{lemma:rvtoqrv}) and difference (Proposition~\ref{prop:FDTCvsqrv} and Corollary~\ref{notmonoid}) between quasi-right-veering and right-veering.  

First, if $L$ is empty then by identifying $[\vphi_{L}]$ with $\phi$, the empty closed braid is quasi-right-veering if and only if the monodromy $\phi$ is right-veering. 

In general, we have the following.

\begin{proposition}
\label{lemma:rvtoqrv}
Let $\psi \in \MCG(S,P)$ be a mapping class. 

%A mapping class $\psi \in \MCG(S,P)$ is quasi-right-veering if $\psi$ is right-veering. %More generally, $\psi \in \MCG(S,P)$ is quasi-right-veering if $f(\psi) \in \MCG(S)$ is right-veering, where 
\begin{enumerate}
\item
If $\psi$ is right-veering then $\psi$ is quasi-right-veering. 
\item
If $f(\psi) \in \MCG(S)$ is right-veering then $\psi$ is quasi-right-veering, where 
$f:\MCG(S,P) \rightarrow \MCG(S)$ is the forgetful map in the generalized Birman exact sequence $(\ref{eqn:Birman})$. 
\end{enumerate}
\end{proposition}    

\begin{corollary}\label{cor:about-braid}
Every closed braid with respect to an open book $(S,\phi)$ is quasi-right-veering if $\phi\in \MCG(S)$ is right-veering. 
In particular, every closed braid with respect to the open book $(D^2, id)$ is quasi-right-veering.
\end{corollary}

\begin{proof}[Proof of Proposition~\ref{lemma:rvtoqrv}]
The first statement immediately follows from the definition of quasi-right-veering. 

To prove the second statement, assume that $\psi\in \MCG(S,P)$ is not quasi-right-veering with respect to some boundary component $C$ of $S$. 
Then there exists an arc $\alpha\in\mathcal{A}_{C}(S,P)$ such that $\psi(\alpha)\ll_\ri\alpha$.  
By Lemma~\ref{lemma:f} we get 
$f(\psi) (f(\alpha)) = f(\psi(\alpha)) \prec_\ri f(\alpha)$ in $\mathcal A_C(S)$; 
that is, $f(\psi) \in \MCG(S)$ is not right-veering. 
\end{proof}

It is proved in \cite[Section 3]{hkm} that the right-veeringness of $\phi \in \MCG(S)$ is almost equivalent to positivity of its FDTC. 
We say ``almost'' because the statement is slightly complicated  when the FDTC $=0$ for non-pseudo Anosov case . 
If $\phi \in \MCG(S)$ is pseudo Anosov, $\phi$ is right-veering with respect to a boundary component $C$ if and only if $c(\phi,C)>0$. 
We remark that parallel statements on positivity and right-veering-ness hold for elements $\psi \in \MCG(S,P)$. 
Namely if $\psi$ is right-veering then $c(\psi, C) \geq 0$. 
Moreover, if $\psi$ is pseudo Anosov then $\psi$ is right-veering with respect to $C$ if and only if $c(\psi,C)>0$.

The next proposition shows significant difference between quasi-right-veering and right-veering. In particular, quasi-right-veering is much less related to positivity of the FDTC.

\begin{proposition}
\label{prop:FDTCvsqrv}
Let $(S,\phi)$ be an open book. 
\begin{enumerate}
\item 
For every boundary component $C$ of $S$ and integers $N <0$ and $n >1$, there exists a closed $n$-braid $\cL=[(S, \vphi), L)]$ with respect to $(S,\phi)$ such that 
\begin{itemize}
\item
$\cL$ is quasi-right-veering with respect to $C$, and 
\item
$c(\phi, \cL, C) \leq N<0$; i.e., $\cL$ is non-right-veering with respect to $C$.
\end{itemize}
\item 
For every negative integer $N$ there exists a closed braid $L$ with respect to $(S,\phi)$ such that  
\begin{itemize}
\item
$\cL$ is quasi-right-veering, and 
\item
$c(\phi,\cL, C) \leq N<0$ for every boundary component $C$; i.e., $\cL$ is non-right-veering.  
\end{itemize}
\end{enumerate}
\end{proposition}

\begin{proof}
Fix a boundary component $C$ of $S$. 
Take  $\varphi \in \Diff(S, \partial S)$ representing $\phi$ so that $\varphi|_{\nu(\partial S)}=id$. 
Let $\nu(C)$ denote the connected component of the $\nu(\partial S)$ that contains $C$. 
We identify $\nu(C)$ with the annulus $A= \{z \in \C \: | \: 1 \leq |z| < 2\}$ so that the boundary component $C$ is identified with $\{z \in \C \: | \: |z|=1\}$. 

We put $$P = \left\{p_i \in \C \: \middle| \: i=1,\dots,n \mbox{ and } p_{i}=  1+\frac{i}{n+1} \right\} \subset A \cong \nu(C) \subset S.$$
For $k\in\mathbb N$ let $\beta_{C,k} \subset S \times [0,1] $ be the geometric $n$-braid whose $i$-th strand $\gamma_{k,i}:[0,1] \rightarrow A\times [0,1] \subset S \times [0,1]$ is given by  (see Figure~\ref{fig:qrvexam}-(1))  
\[
\gamma_{k,i}(t) = 
\begin{cases}
((1+\frac{1}{n+1})\exp(2\pi \sqrt{-1}k t),\ t)  & (i=1) \\
((1+\frac{2}{n+1})\exp(-2\pi \sqrt{-1}k t), \ t) & (i=2) \\
(1+\frac{i}{n+1}, \ t ) & (i = 3,\dots,n). 
\end{cases}
\] 
Thus, the 1st strand of $\beta_{C,k}$ winds $k$ times around $C$  counterclockwise and the 2nd strand winds $k$ times  clockwise.  
Let $L_{C,k}:= \pi(\beta_{C,k}) \subset M_{(S,\varphi)}$ be the closed $n$-braid with respect to the abstract open book $(S,\vphi)$  obtained by taking the braid closure of $\beta_{C,k}$, where $\pi:S\times[0,1] \rightarrow M_{(S,\varphi)}:= S\times[0,1]/\sim$ is the quotient map. 

\begin{figure}[htbp]
\begin{center}
\includegraphics*[bb= 148 546 449 715,width=90mm]{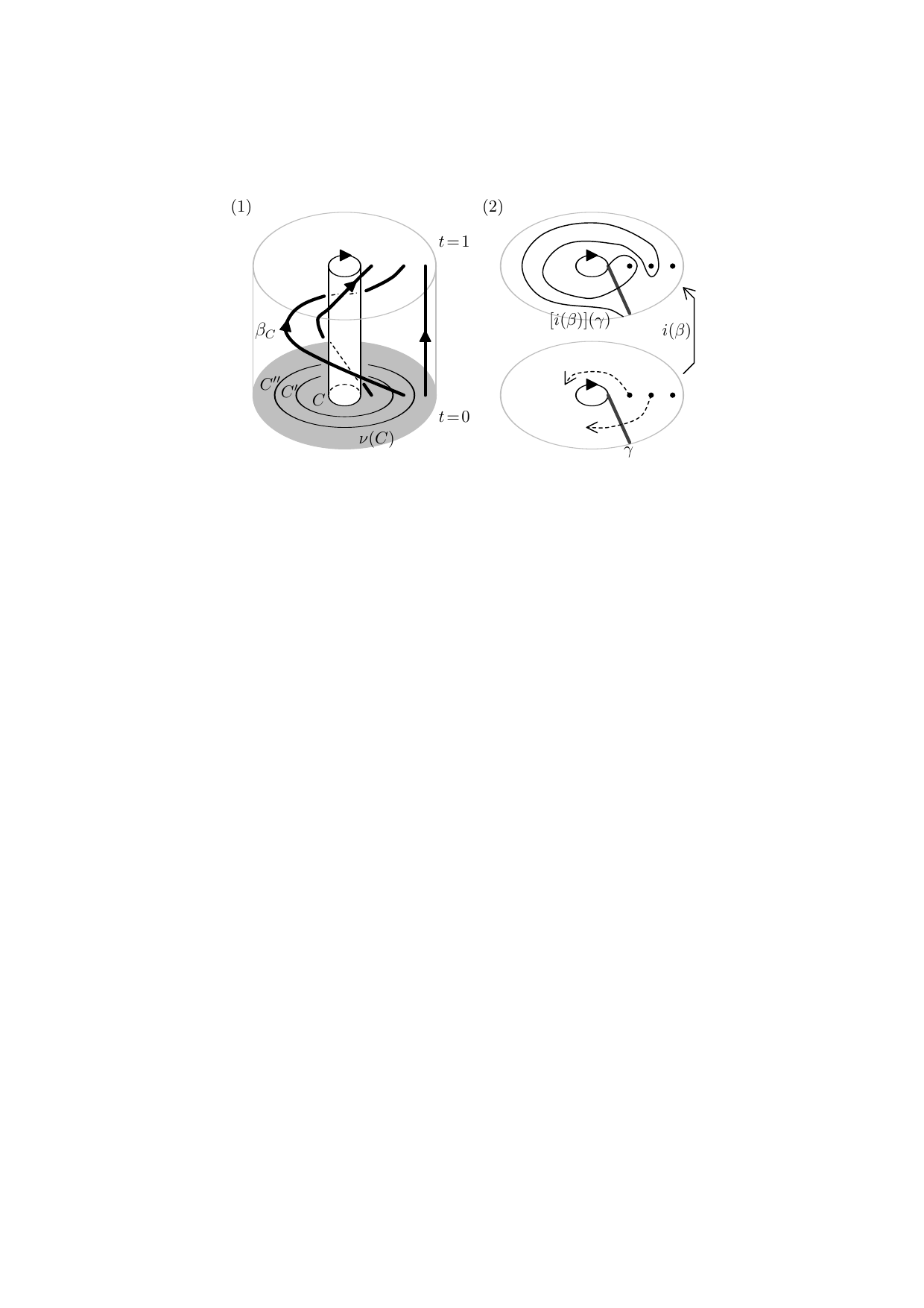}
\caption{
(1) The braid $L_{C, 1}$ is not right-veering but quasi-right-veering. \\
(2) The map $(T_{C})^{-1}(T_{C'})^{2}(T_{C''})^{-1}$ forces to form a boundary right $P$-bigon.}
\label{fig:qrvexam}
\end{center}
\end{figure}

With the push map $i: B_n(S) \rightarrow \MCG(S,P)$ in the generalized Birman exact sequence (\ref{eqn:Birman}) we have $i([\beta_{C,1}])= 
(T_{C})^{-1} (T_{C'})^{2} (T_{C''})^{-1}$, where $T_{C}, T_{C'}$ and $T_{C''}$ are the right-handed Dehn twists along the curves $C$, $C' = \{z \in A \: | \: |z|= \frac{3}{2n+2}\}$ and $C''= \{z \in A \: | \: |z|= \frac{5}{2n+2}\}$.
The distinguished monodromy of the closed braid $L:=L_{C,k}$ is 
$$
[\vphi_L]=[\vphi_{L_{C,k}}]= i([\beta_{C,k}])[j(\vphi)] = (T_{C})^{-k} \ (T_{C'})^{2k} \ (T_{C''})^{-k} [j(\varphi)]
$$ 
Since $j(\varphi)=id$ on $\nu(C)$ we have $c(\phi, \cL, C) = c([\vphi_L], C)= -k <0$. 
This shows that $\cL$ is not right-veering.

For any $\gamma \in \A$ the factor $(T_{C})^{-k}(T_{C'})^{2k}(T_{C''})^{-k}$ of $[\vphi_L]$ forces to form a boundary right $P$-bigon from $[\vphi_L](\gamma)$ to $\gamma$. See Figure~\ref{fig:qrvexam}-(2). 
Thus by Proposition~\ref{lemma:b-right-p-bigon} 
$[\vphi_L](\gamma) \not\ll_\ri \gamma$ for every $\gamma \in \A$, which means $\cL$ is quasi-right-veering with respect to $C$. This proves (1).

Next we prove (2).
Let $\{C_1,\dots, C_d\}$ be the set of boundary components of $S$. For each component $C_{i}$ we take a closed braid $L_{C_{i},k}$ given in the proof of (1), and let $L = \bigsqcup_{i=1}^{d} L_{C_{i},k}$ be the disjoint union of $L_{C_{i},k}$. %By (1) and Proposition~\ref{prop:isotopic-braids} $L$ is quasi-right-veering and $c(L,\phi,C_i) \leq -k$ for all $i=1,\dots,d$.  
By (1) we see that $\cL$ is quasi-right-veering. 
By Proposition~\ref{prop:isotopic-braids} we obtain $c(\phi, \cL, C_i) \leq -k$ for all $i=1,\dots,d$.  
\end{proof}

The set of right-veering mapping classes in $\MCG(S,P)$ forms a monoid. 
However, this is not the case for quasi-right-veering mapping classes: 

\begin{corollary}\label{notmonoid}
The set of quasi-right-veering mapping classes in $\MCG(S,P)$ does \emph{not} form a monoid. 
\end{corollary}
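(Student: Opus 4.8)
The plan is to exhibit two quasi-right-veering mapping classes whose product is not quasi-right-veering, which immediately shows that the set of quasi-right-veering mapping classes in $\MCG(S,P)$ is not closed under composition and hence is not a monoid. The natural candidates come from the building blocks used in the proof of Proposition~\ref{prop:FDTCvsqrv}: the local model near a boundary component in which winding strands produce a factor of the form $(T_C)^{-k}(T_{C'})^{2k}(T_{C''})^{-k}$.

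First I would set $P$ and the annular collar $\nu(C)\cong A$ exactly as in the proof of Proposition~\ref{prop:FDTCvsqrv}, so that $i(\beta_{C,1}) = (T_C)^{-1}(T_{C'})^{2}(T_{C''})^{-1} \in \MCG(S,P)$ is quasi-right-veering with respect to $C$; call this element $\psi_1$. For the second element, I would take a "mirror" braid near the same boundary component, winding the strands in the opposite sense, which yields a mapping class of the form $\psi_2 = (T_C)(T_{C'})^{-2}(T_{C''})$, or more simply just pick $\psi_2 = (T_{C'})^{-2}(T_C)(T_{C''})$ arranged so that $\psi_2$ is again quasi-right-veering with respect to $C$ (for instance because the positive boundary twist dominates and there is no boundary right $P$-bigon obstruction for $\psi_2(\gamma)\ll_\ri\gamma$, or because one can verify directly that $\psi_2$ is right-veering, hence quasi-right-veering by Proposition~\ref{lemma:rvtoqrv}). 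Then I would compute the product $\psi_2\circ\psi_1$ and show it contains a net negative power of $T_C$ accompanied by the configuration of twists that forces a boundary right $P$-bigon from $(\psi_2\psi_1)(\gamma)$ to $\gamma$ for every $\gamma\in\A$, exactly as in Figure~\ref{fig:qrvexam}-(2); by Proposition~\ref{lemma:b-right-p-bigon} this gives $(\psi_2\psi_1)(\gamma)\ll_\ri\gamma$ is obstructed in the wrong direction, i.e. $(\psi_2\psi_1)(\gamma)\ll_\ri\gamma$, so $\psi_2\psi_1$ is not quasi-right-veering.

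Alternatively, and perhaps more transparently, I would use the empty-braid interpretation: if $P=\emptyset$ then by Proposition~\ref{lemma:rvtoqrv} and the remark following Definition~\ref{defn:qveer}, quasi-right-veering coincides with right-veering, and it is classical \cite{hkm} that right-veering mapping classes already fail to form a monoid only in a weak sense — so one genuinely needs $P\neq\emptyset$. With $P\neq\emptyset$ one picks $\psi_1$ quasi-right-veering with $c(\psi_1,C)$ very negative (from Proposition~\ref{prop:FDTCvsqrv}) and $\psi_2 = (T_C)^N$ for $N$ large and positive; then $\psi_2$ is right-veering hence quasi-right-veering, while $c(\psi_2\circ\psi_1, C) = N + c(\psi_1,C)$ can still be arranged, but the cleaner route is to choose $\psi_2$ so that $\psi_2\circ\psi_1$ equals (up to conjugation) a single strand winding negatively around $C$ with no compensating $(T_{C'})^{2k}(T_{C''})^{-k}$ factor, i.e. a pure negative boundary twist $i(\beta)$ with $\beta$ a single strand looping $C$ once, which is manifestly not quasi-right-veering because it moves some arc strictly to the left with efficient intersection, so Proposition~\ref{proposition:sufficient-condition} gives $\psi_2\psi_1(\alpha)\ll_\ri\alpha$.

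The main obstacle I anticipate is not the existence of the example but the bookkeeping: I must verify both that $\psi_1$ and $\psi_2$ individually are quasi-right-veering (for $\psi_1$ this is done in Proposition~\ref{prop:FDTCvsqrv}; for $\psi_2$ I should choose it to be right-veering so that Proposition~\ref{lemma:rvtoqrv} applies with no further work) and that the composition genuinely admits, for \emph{every} arc $\alpha\in\A$, an arc $\alpha'$ with $\psi_2\psi_1(\alpha)\ll_\ri\alpha'\ll_\ri\alpha$ — i.e. that the boundary right $P$-bigon created by the surviving negative $T_C$-power is an honest obstruction in the direction that \emph{witnesses} non-quasi-right-veering rather than merely failing the total-order comparison. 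The safest execution is to arrange $\psi_2\psi_1$ to be, up to conjugation in $\MCG(S,P)$, the push of a single negatively-wound strand, for which non-quasi-right-veeringness is immediate from Proposition~\ref{proposition:sufficient-condition} since such a push sends a suitable arc efficiently to its strict left. I would then state the corollary's proof as: take $\psi_1$ from Proposition~\ref{prop:FDTCvsqrv} and an explicit right-veering $\psi_2$ with $\psi_2\psi_1$ of this form; both factors are quasi-right-veering, the product is not, hence no monoid.
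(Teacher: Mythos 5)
Your overall strategy --- reuse $\psi_1=i(\beta_{C,1})=(T_C)^{-1}(T_{C'})^{2}(T_{C''})^{-1}$ from Proposition~\ref{prop:FDTCvsqrv} as one factor and choose the second factor so that the product collapses to a bare negative twist --- is the paper's strategy; the paper takes $\chi=T_{C'}^{-1}\,i(\beta_{C,1})^{-1}=T_CT_{C'}^{-3}T_{C''}$ and $\psi=i(\beta_{C,1})$, so that $\chi\psi=T_{C'}^{-1}$. But your concrete choices do not close the argument. First, your explicit candidate $\psi_2=(T_C)(T_{C'})^{-2}(T_{C''})=i(\beta_{C,1})^{-1}$ gives $\psi_2\psi_1=\mathrm{id}$, which proves nothing. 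More seriously, your ``safest execution'' --- arranging $\psi_2\psi_1$ to be the push $i(\beta)$ of a single strand winding negatively around $C$ --- cannot produce a non-quasi-right-veering product: by the Birman exact sequence (\ref{eqn:Birman}) we have $f(i(\beta))=\mathrm{id}_{\MCG(S)}$, which is right-veering, so $i(\beta)$ is quasi right-veering by Proposition~\ref{lemma:rvtoqrv}. Equivalently, $i(\beta)(\alpha)$ and $\alpha$ are always isotopic in the unpunctured surface $S$, so whenever they differ they cobound marked bigons; the hypothesis of Proposition~\ref{proposition:sufficient-condition} (moving an arc to its strict left \emph{efficiently}, i.e.\ with no marked bigons) is never satisfied by a point-push, and Lemma~\ref{lemma:f} directly rules out $i(\beta)(\alpha)\ll_\ri\alpha$. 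This is precisely the phenomenon Proposition~\ref{prop:FDTCvsqrv} illustrates, and it is why the paper engineers the product to be $T_{C'}^{-1}$ instead: $C'$ is essential in $S\setminus P$ while $f(T_{C'}^{-1})=T_C^{-1}$ is \emph{not} right-veering, so a suitable arc is moved strictly left with no marked-bigon rescue and Proposition~\ref{proposition:sufficient-condition} applies.

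Two further points. You invoke Proposition~\ref{lemma:b-right-p-bigon} to conclude that the product is \emph{not} quasi right-veering, but that proposition is an \emph{obstruction} to $\ll_\ri$: it can only certify that a map \emph{is} quasi right-veering, never that it fails to be. To exhibit failure you must produce an arc $\alpha$ with $(\psi_2\psi_1)(\alpha)\ll_\ri\alpha$, and the tool for that is Proposition~\ref{proposition:sufficient-condition}. Finally, you defer the quasi-right-veering-ness of $\psi_2$ to ``choose it right-veering so Proposition~\ref{lemma:rvtoqrv} applies''; for the factor that actually makes the product a negative twist (the paper's $\chi=T_CT_{C'}^{-3}T_{C''}$) this still requires a sentence of justification --- e.g.\ that the outermost twist $T_C$ is positive, or that the surviving negative $T_{C'}$-power traps $p_1$ in a boundary right $P$-bigon --- and cannot be waved away.
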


\begin{proof}
We use the same notations in Proposition~\ref{prop:FDTCvsqrv}. 
Let $\chi ={(T_{C'})}^{-1} i([\beta_{C,1}])^{-1} = T_{C}T_{C'}^{-3}T_{C''}$ and $\psi = i([\beta_{C,1}])$.
Both $\chi$ and $\psi$ are quasi-right-veering but $\chi\psi = (T_{C'})^{-1}$ is not quasi-right-veering. 
\end{proof}

\subsection{Transverse links are right-veering and quasi-right-veering}\label{subsection:transv-links}

We have defined right-veering and quasi-right-veering and studied their properties. 
In this section, we study transverse links in contact manifolds from the view point of right-veering and quasi-right-veering and obtain Propositions~\ref{prop:transverse-link} and \ref{prop:stabilization}.

Recall that an abstract open book $(S, \vphi)$ gives a natural open book decomposition of the manifold $M_{(S, \vphi)}$ (see Section~\ref{subsec:2.2}). 
We say that 
a contact structure $\xi$ on $M_{(S, \vphi)}$ is {\em supported by $(S, \vphi)$} if $\xi$ is isotoped through contact structures so that there is a contact 1-form $\alpha$ for $\xi$ such that $d\alpha$ is a positive area form on each page $S_t$ of the open book and $\alpha>0$ on the binding $B$.
By Thurston and Winkelnkemper \cite{TW}, for every $(S, \vphi)$  there exists a contact structure on $M_{(S, \vphi)}$ supported by $(S, \vphi)$. Such a contact structure is unique up to isotopy due to Giroux \cite{gi} and denoted by $\xi_{(S,\vphi)}$.

\begin{definition}
Let $\xi_{(S, \vphi)}$ be a contact structure on $M_{(S, \vphi)}$ supported by $(S, \vphi)$. 
In this paper, we say that:
\begin{itemize}
\item 
a contact 3-manifold $(M, \xi)$ is {\em supported by} $(S,\vphi)$ if $(M, \xi)$ and $(M_{(S, \vphi)}, \xi_{(S, \vphi)})$ are contactomorphic. 
\item
an open book $(S,\phi)$ is an \emph{open book decomposition} of $(M,\xi)$ if $(M,\xi)$ is supported by an abstract open book $(S,\vphi)$ with $[\vphi]=\phi$.
\end{itemize}
\end{definition}

Next we list basic facts about transverse links and closed braids.  The fact (3) is discovered by Bennequin \cite{Ben} (for $(S,\phi)=(D^2, id)$ case), Mitsumatsu and Mori \cite{MM}, and Pavelescu \cite[Theorem 3.2]{pav}: 
\begin{enumerate}
\item 
Every closed braid with respect to $(S,\vphi)$ is a transverse link in $(M_{(S,\vphi)},\xi_{(S,\vphi)})$ for some contact structure $\xi_{(S,\vphi)}$ supported by $(S,\vphi)$.\\

\item The transverse link type in (1) only depends on the equivalence class of the closed braid, in the following sense:\\
 
Let $L$ and $L'$ be closed braids with respect to $(S,\vphi)$ and $(S,\vphi')$, respectively and assume that $((S,\vphi),L)$ and $((S,\vphi'),L')$ are equivalent.
Suppose that $\xi_{(S,\vphi)}$ (resp. $\xi'_{(S,\vphi)}$) is a contact structure on $M_{(S,\vphi)}$ (resp. $M_{(S,\vphi')}$) supported by $(S,\vphi)$ (resp. $(S,\vphi')$) so that $L$ (resp. $L'$) is a transverse link in $(M_{(S,\vphi)},\xi_{(S,\vphi)})$ (resp. $(M_{(S,\vphi')},\xi_{(S,\vphi')})$).

Since $\vphi$ and $\vphi'$ are isotopic, an isotopy between $\vphi$ and $\vphi'$ induces a diffeomorphism $\rho: M_{(S,\vphi)} \rightarrow M_{(S,\vphi')}$ (see (\ref{eq:diffeo-rho})) that preserves the pages. In particular, $\rho_*(\xi_{(S,\vphi)})$ is supported by the open book $(S,\vphi')$ hence it is isotopic to $\xi_{(S,\vphi')}$. By Gray stability we have a diffeomorphism $\theta: M_{(S,\vphi')} \rightarrow M_{(S,\vphi')}$ isotopic to the identity satisfying $\theta_*(\rho_*\xi_{(S,\vphi)})=\xi_{(S,\vphi')}$. Consequently, we have a contactomorphism
\begin{equation}\label{eq:vrho}
\varrho=\theta\circ\rho:  (M_{(S,\vphi)},\xi_{(S,\vphi)}) \rightarrow (M_{(S,\vphi')},\xi_{(S,\vphi')}) 
\end{equation}
and $\varrho(L)$ and $L'$ are transversely isotopic. 
Note that $\varrho(L)$ is a transverse link in $(M_{(S,\vphi')},\xi_{(S,\vphi')})$ but may not be in braid position with respect to $(S,\vphi')$ since $\theta$ may not preserve the pages.\\

\item
Any transverse link in a contact 3-manifold 
$(M_{(S,\vphi)},\xi_{(S,\vphi)})$ can be transversely isotoped to a closed braid with respect to $(S, \vphi)$.\\
\end{enumerate}

\begin{definition} 
Suppose that $(S,\phi)$ is an open book decomposition of $(M,\xi)$. 
We say that {\em a transverse link $\mathcal T$ in $(M,\xi)$ is represented by a closed braid $\cL$ with respect to $(S,\phi)$}, if there is an abstract open book $(S,\vphi)$ with $[\vphi]=\phi$ and a closed braid $L$ with respect to $(S,\vphi)$  so that $\cL=[(S,\vphi),L]$ and there is a contactomorphism  $\tau: (M,\xi) \to (M_{(S, \vphi)},\xi_{(S,\vphi)})$ such that $L=\tau(\mathcal T)$.   
\end{definition}

We prove two propositions. 
The first one is on quasi-right-veering-ness of transverse links.

\begin{proposition}\label{prop:transverse-link}
Every transverse link in a contact manifold $(M,\xi)$ admits a quasi-right-veering closed braid representative with respect to some open book decomposition of $(M,\xi)$. 
\end{proposition}

\begin{proof}
In \cite[Proposition 6.1]{hkm} of Honda, Kazez, and Mati\'c show that every contact 3-manifold admits an open book decomposition $(S,\phi)$ with right-veering monodromy. This fact and our Corollary~\ref{cor:about-braid} yield the proposition. 
\end{proof}

The second proposition is about right-veering-ness of transverse links.

To state the proposition, we recall a positive stabilization of a closed braid. Here we present an algebraic formulation so that the connection to distinguished monodromy is clear. For a geometric formulation based on open book foliation machinery, we refer the paper \cite{ik7}.

As usual, take $\varphi \in \Diff(S,\partial S)$ with $[\varphi]=\phi$ so that $\varphi|_{\nu(\partial S)}=id$. 
Let $L$ be a closed $n$-braid with respect to an abstract open book $(S,\varphi)$ such that $P =p(L\cap S_0) \subset \nu(\partial S)$. 
Let $C$ be a boundary component of $S$. 
Let $\nu'(C) \subset \nu(C)$ be a sub-collar neighborhood of $C$ that does not intersect $P$. 
Let $\beta_L \subset S \times[0,1]$ be the geometric $n$-braid obtained from $L$. By taking $\nu'(C)$ sufficiently small, we may assume that
\begin{equation}
\label{eqn:stab}
\beta_L \cap (\nu'(C)\times[0,1])=\emptyset.
\end{equation}
Choose a point $q \in \nu'(C)$. 
The disjoint union of the strand $\{q\}\times[0,1]$ and $\beta_L$ yields a geometric $(n+1)$-braid  
$$
\overline{\beta_L} := \beta_L \sqcup (\{q\} \times[0,1]) \subset S \times[0,1].
$$

Let $\gamma$ be a properly embedded arc in $S\setminus(P\cup \{q\})$ that connects a point $p \in P$ and $q$. 
Let $H_{\gamma} \in \MCG(S,P\cup\{q\})$ be the positive half twist about $\gamma$ and $h_{\gamma} \subset S\times[0,1]$ be a geometric $(n+1)$-braid that represents $H_{\gamma}$ 
in the sense that $i([h_{\gamma}])=H_{\gamma}$, where $i:B_{n+1}(S) \rightarrow \MCG(S,P\cup\{q\})$ denotes the push map in the generalized Birman exact sequence (\ref{eqn:Birman}).

%which we mean that $i([\beta_{\gamma}])=H_{\gamma}$, where $i:B_2(S) \rightarrow \MCG(S,\{p,q\})$ denotes the push map.
%We extend $\beta_{\gamma}$ as the $(n+1)$-braid $\overline{\beta_{\gamma}}$ by adding $(n-1)$ strands $\sqcup_{p' \in P\setminus \{p\}} p'\times [0,1]$. 

\begin{definition}[Braid stabilization]\label{def-of-L'}
Let $L'$ be a closed $(n+1)$-braid obtained by taking the braid closure of the geometric $(n+1)$-braid 
$$h_\gamma \bullet \overline{\beta_L}  \mbox{ \ (read from right to left) }$$
where the bullet ``$\bullet$'' denotes the concatenation of geometric braids. 
We say that $L'$ is a \emph{positive stabilization} of the closed braid $L$ about the arc $\gamma$.  
\end{definition}

Since $q\in\nu'(C)$ the property (\ref{eqn:stab}) implies that $L'$ is obtained by connecting $L$ and a meridian circle of $C$ with a positively twisted band. Therefore, $L$ and $L'$ are transversely isotopic. 
See \cite[Theorem 4.2]{pav} where Pavelescu proves that closed braids are transversely isotopic if and only if they differ by braid isotopies and positive stabilizations and their inverses.

Recall the diffeomorphism $\varphi_L \in \Diff(S,P,\partial S)$ in (\ref{def of varphi_L}) that represents the distinguished monodromy $[\vphi_L]\in\MCG(S, P)$. 
By (\ref{eqn:stab}), we may assume that $\varphi_L|_{\nu'(C)}=id$.
Since $q \in \nu'(C)$ we may view $\varphi_L$ as an element of $\Diff(S, P\cup \{q\},\partial S)$ and denoted it by $\overline{\varphi_L}$. 
We obtain $[\overline{\varphi_L}] \in \MCG(S,P\cup\{q\})$ and the distinguished monodromy $L'$ satisfies 
\begin{equation}
\label{eqn:stab-monodromy}
[\vphi_{L'}] = H_{\gamma}[\overline{\vphi_L}] \in \MCG(S,P\cup\{q\}).
\end{equation}

Here is the second proposition:

\begin{proposition}
\label{prop:stabilization}
Every closed braid $\cL=[L]$ with respect to an open book $(S,\phi)$ can be made right-veering after a sequence of positive  stabilizations of $L$.
\end{proposition}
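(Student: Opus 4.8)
The plan is to first record how a positive stabilization changes the distinguished monodromy, then reduce to one boundary component at a time, and finally drive the distinguished monodromy—along that boundary component—to be right-veering by piling on positive Dehn twists. For Step~1, let $(S^+,\phi^+)$ be the positive stabilization of $(S,\phi)$ along a properly embedded arc $\gamma\subset S$, which we are free to take disjoint from $P=p(L\cap S_0)$. Then $L$ is still a closed braid in $(S^+,\phi^+)$ with the same marked set $P$, the surface braid $\beta_L\subset S\times[0,1]\subset S^+\times[0,1]$ is unchanged, and unwinding Definition~\ref{def:distinguished-monodromy} shows that $\phi^+_L$ is $\iota(\phi_L)$ composed with the positive Dehn twist $T_c$ along the stabilization curve $c=\gamma\cup(\text{core of the handle})$, where $\iota\colon\MCG(S,P)\hookrightarrow\MCG(S^+,P)$ extends by the identity over the new $1$-handle and $c$ misses $P$. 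So after finitely many positive stabilizations the distinguished monodromy is $\iota(\phi_L)$ composed with a product of positive Dehn twists along curves of our choosing.

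For Step~2, I would use that, because $\prec_{\ri}$ is a total order, positive Dehn twists are right-veering with respect to every boundary component and the right-veering elements of $\MCG(S,P)$ form a monoid (this is exactly where right-veering is better behaved than quasi right-veering, cf.\ Corollary~\ref{notmonoid}); moreover, right-veeringness with respect to a fixed boundary component is preserved by positive stabilization (the marked-surface analogue of a lemma of \cite{hkm}). Hence once $\phi_L$ has been made right-veering with respect to one boundary component, all further positive stabilizations keep it so, and it suffices to treat the finitely many boundary components one at a time.

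For Step~3, fix a boundary component $C$. I would stabilize until the distinguished monodromy becomes pseudo-Anosov with $c(\phi,L,C)=c(\phi_L,C)>0$; the criterion recalled just before Proposition~\ref{prop:FDTCvsqrv}—a pseudo-Anosov class is right-veering with respect to $C$ iff its FDTC there is positive—then finishes this component. Two moves enter. First, stabilizing along an arc that runs once parallel to $C$ inside a collar produces a stabilization curve isotopic to $C$ and adds genus (rather than splitting $C$), so composing with that positive twist adds exactly $1$ to $c(\phi_L,C)$. Second, appending positive stabilizations whose curves fill the page, together with the fixed, possibly left-veering push-map factor $i(\beta_L)$, makes the composite pseudo-Anosov by a Thurston-type construction. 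Performing the second block first and the first move last, the pseudo-Anosov property survives composition with a boundary-parallel twist, so one lands on a pseudo-Anosov $\phi_L$ with $c(\phi_L,C)>0$.

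The step I expect to be the real obstacle is Step~3: arranging \emph{both} the pseudo-Anosov property and positivity of the FDTC using positive stabilizations alone. The delicate points are (i) checking that the ``once around $C$'' stabilizing arc genuinely keeps $C$ connected and realizes a full positive twist about $C$ (a careless arc near $C$ splits $C$ in two instead of adding genus), and (ii) producing a pseudo-Anosov map while having only positive twists to add—here the essential fact is that $i(\beta_L)$ is a \emph{fixed} element, so enough positive twisting eventually dominates its negative part, and the work is to make ``enough'' precise, e.g.\ through the quasimorphism defect of the FDTC on $\MCG(S^+,P)$. If one prefers to avoid pseudo-Anosov monodromies, an alternative for Step~3 is: first pass, via \cite[Proposition~6.1]{hkm} and a common positive stabilization with a right-veering open book of the same contact manifold (legitimate since positive stabilization preserves right-veeringness), to a stabilization in which the forgetful image of the distinguished monodromy is right-veering; by Proposition~\ref{lemma:rvtoqrv}, $L$ is then quasi right-veering, and it remains to upgrade quasi right-veering to right-veering by further positive stabilizations whose curves encircle the marked points trapped in the offending boundary right $P$-bigons—showing the finitely many such bigons can each be removed by one stabilization, without creating new ones, is then the crux.
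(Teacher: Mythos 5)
There is a genuine gap, and it starts with the reading of the statement: the ``positive stabilizations'' in this proposition are positive \emph{braid} (Markov) stabilizations of the closed braid $L$, not positive (Hopf-plumbing) stabilizations of the open book $(S,\phi)$. The open book, the page $S$, and the ambient contact manifold are kept fixed; what changes is the braid, which acquires new strands, so the marked set grows from $P$ to $P\cup\{q,q'\}$ and the distinguished monodromy changes by composition with positive \emph{half-twists} $H_{\gamma}$ along arcs ending at the new marked points --- not by Dehn twists along curves in an enlarged page. This is forced by the conclusion (the resulting right-veering braid must be \emph{transversely isotopic} to $L$, which is exactly what positive braid stabilization preserves) and by the cited model case, Plamenevskaya's result for braids in $(D^2,id)$. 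Your entire argument --- computing $\phi^+_L=\iota(\phi_L)\circ T_c$ on a stabilized page $S^+$, then trying to force a pseudo-Anosov distinguished monodromy with positive FDTC by adding positive Dehn twists --- therefore addresses a different statement. Even on its own terms that route has problems: a single $1$-handle attached to one boundary circle of an oriented surface splits that circle into two components rather than adding genus, so the ``once around $C$'' move you rely on does not realize a full twist about a curve parallel to $C$; and the claim that enough positive twisting makes the composite pseudo-Anosov with positive FDTC is asserted, not proved. Your fallback (get $f(\phi_L)$ right-veering via HKM's Proposition 6.1 and then ``upgrade'' quasi right-veering to right-veering) again changes the open book and leaves the upgrading step, which is the whole content, unresolved.

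The actual argument is much more elementary. Place $L$ so that $P\subset\nu(\partial S)$ with $\phi=id$ there. For each boundary component $C$, perform two positive braid stabilizations: first along an arc $\gamma_1$ from a point of $P$ to a new point $q$ in a sub-collar $\nu'(C)$ disjoint from $P$, then along an arc $\gamma_2\subset\nu'(C)$ from $q$ to another new point $q'$, chosen so that $\gamma_1$ and $\gamma_2$ cross once and suitable subarcs of $\gamma_1\cup\gamma_2$ form a loop homotopic to $C$ in $S\setminus(P\cup\{q'\})$. The new distinguished monodromy is $\phi_{L'}=H_{\gamma_2}\circ H_{\gamma_1}\circ\phi_L\in\MCG(S,P\cup\{q,q'\})$; since $\phi_L=id$ on $\nu'(C)$ and every essential arc based at $C$ must intersect $\gamma_1$ or $\gamma_2$, the two positive half-twists push every such arc strictly to the right, so $\phi_{L'}$ is right-veering with respect to $C$. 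Repeating for each boundary component finishes the proof. If you want to salvage your write-up, the key missing idea is this pair of half-twist stabilizations that ``guard'' a given boundary component, which is the braid analogue of the Honda--Kazez--Mati\'c stabilization trick for open books.
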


When $(S,\phi) = (D^{2},id)$ the same statement is proved in \cite[Proposition 3.1]{pl}.

\begin{proof}

Suppose that $L$ is a closed braid with respect to an abstract open book $(S, \vphi)$ so that $L$ represents $\cL$ and the conditions (\ref{eqn:collar}) and (\ref{eqn:P}) are satisfied.

Let $C$ be a boundary component of $S$. 
Let $\nu'(C) \subset \nu(C)$ be a sub-collar neighborhood of $C$ that does not contain (or intersect) $P=p(L\cap S_0) \subset \nu(\partial S)$.
Choose points $q$ and $q'$ in $\nu'(C)$. 
Let $\gamma_1 \subset S \setminus (P\cup\{q, q'\})$ be a properly embedded arc that connects one of the points in $P$ and the point $q$.
Let $\gamma_2 \subset \nu'(C)\setminus \{q, q'\}$ be an arc that connects $q$ and $q'$. Assume the following (see Figure~\ref{fig:stab}). 
\begin{enumerate}
\item 
The interiors of $\gamma_1$ and $\gamma_2$ intersect exactly at one point in $\nu'(C)$. We name it $r$. 
\item 
Let $\gamma_1' \subset \gamma_1$ and $\gamma_2' \subset \gamma_2$ be the sub-arcs connecting $r$ and $q$. 
Then the simple closed curve $\gamma_1' \cup \gamma_2'$ is homotopic to $C$ in $S\setminus (P\cup\{q'\})$. 
\end{enumerate}
\begin{figure}[htbp]
\begin{center}
\includegraphics*[bb=206 629 398 713,width=70mm]{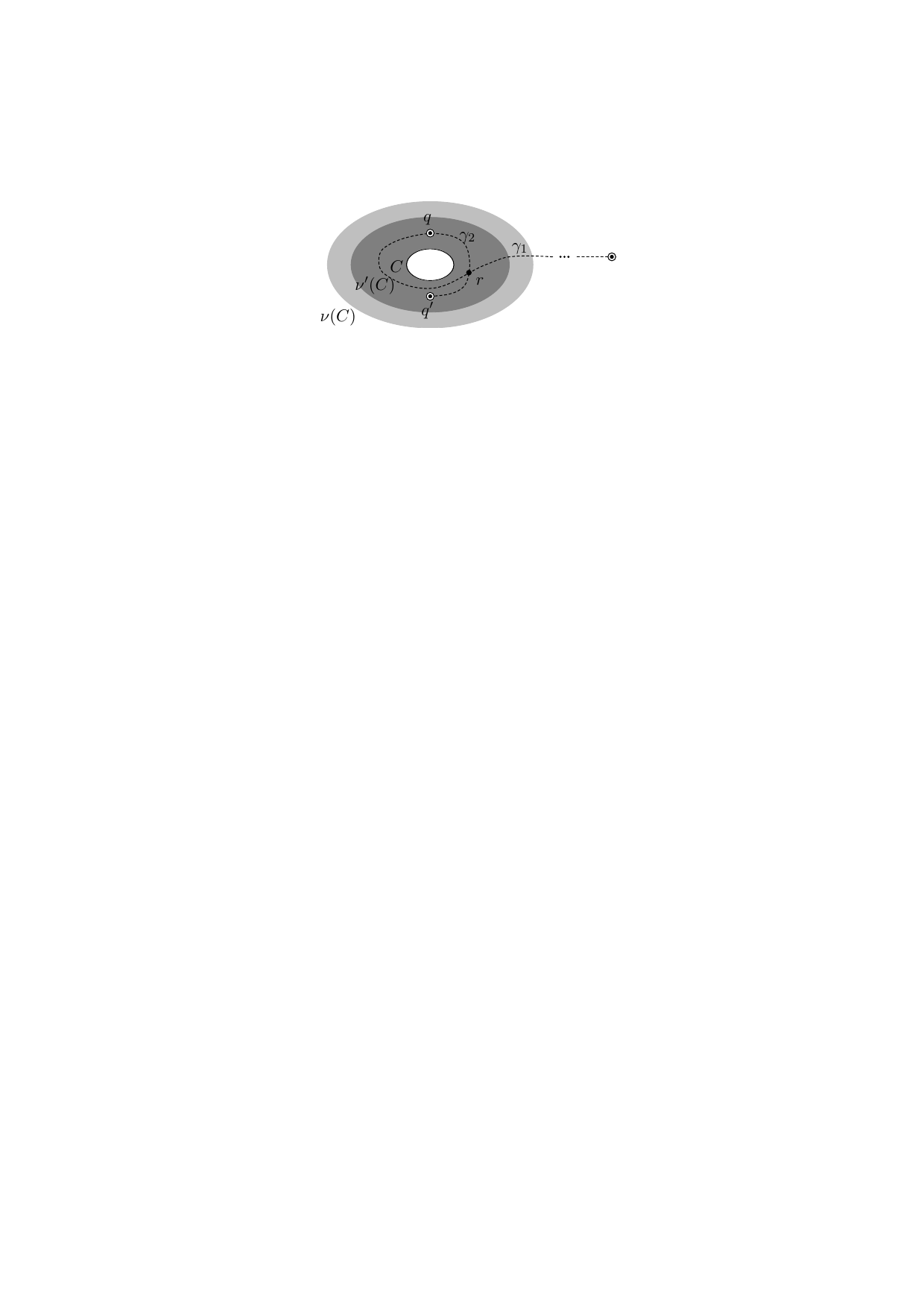}
\caption{Twice stabilizations about $C$ makes a closed braid right-veering with respect to $C$.}
\label{fig:stab}
\end{center}
\end{figure}

Let $L'$ be a closed $(n+2)$-braid obtained from $L$ by positive stabilizations first about $\gamma_1$ and then $\gamma_2$ as constructed in Definition~\ref{def-of-L'}. 

The diffeomorphism $\vphi_L \in \Diff(S,P,\partial S)$ satisfies $\varphi_L= id$ on $\nu'(C)$. 
Let $\overline{\vphi_L}$ denote the diffeomorphism $\vphi_L$ viewed as an element of $\Diff(S,P\cup\{q,q'\}, \partial S)$. 
By (\ref{eqn:stab-monodromy}) the distinguished monodromy of $L'$ satisfies 
$[\vphi_{L'}]= H_{\gamma_2}  H_{\gamma_1} [\overline{\vphi_L}] \in \MCG(S,P\cup\{q,q'\})$.
%Here $\overline{\phi_L}$ is the distinguished monodromy of $L$ viewed as an element of $\MCG(S,P\cup\{q,q'\})$ by taking a representative $\varphi_L \in \Diff(S,P,\partial S)$ with $\varphi_L|_{\nu'(C)}=\textrm{id}$.

Since $\varphi_L={\rm id}$ on $\nu'(C)$ 
and every essential arc in $\mathcal A_C(S, P\cup\{q,q'\})$ intersects either $\gamma_{1}$ or $\gamma_2$, the monodromy $[\vphi_{L'}]$ is right-veering with respect to $C$.

Applying this operation for every boundary component we get a right-veering closed braid that is transversely isotopic to the original braid $L$. 
\end{proof}

\section{Characterization of non-loose links}
\label{sec:proof}

We now prove our main theorem: 

\begin{theorem}
\label{theorem:main}
A transverse link $\mathcal{T}$ in a contact 3-manifold $(M,\xi)$ is non-loose if and only if every braid representative of $\mathcal{T}$ with respect to every open book decomposition  of $(M,\xi)$ is quasi-right-veering. 
\end{theorem}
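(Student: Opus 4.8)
The plan is to establish the two directions separately and to reduce each to the right-veering characterization of tightness combined with the branched-covering machinery of Proposition~\ref{prop:covering} (or, more precisely, its qualitative consequence that negative FDTC downstairs forces overtwistedness upstairs). For the ``only if'' direction, suppose $K$ is non-loose but some braid representative $L$ of $K$ with respect to an open book $(S,\phi)$ supporting $(M,\xi)$ fails to be quasi right-veering with respect to a boundary component $C$. Then there is an arc $\alpha\in\mathcal A_C(S,P)$ with $\phi_L(\alpha)\ll_{\sf right}\alpha$. I would then produce, from this data, an overtwisted disk in the complement of $K$: the chain of arcs witnessing $\phi_L(\alpha)\ll_{\sf right}\alpha$ with disjoint successive interiors is exactly what is needed to build a bypass/overtwisted disk disjoint from the braid, exactly as in the open-book case of \cite{hkm} (where $\phi(\alpha)\prec_{\sf right}\alpha$ yields an overtwisted contact structure), but now carried out in the punctured surface $S\setminus P$ so that the resulting disk misses $L$. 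This contradicts non-looseness of $K$.

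For the ``if'' direction, suppose $K$ is loose; I want to find an open book $(S,\phi)$ supporting $(M,\xi)$ and a braid representative $L$ of $K$ that is not quasi right-veering. Looseness of $K$ means the complement $(M\setminus K,\xi)$ is overtwisted, equivalently the associated half-open-book (or the open book of $M$ relative to $K$) fails to be right-veering. The cleanest route is via the branched double cover: take a branched cover $\pi:(\widetilde S,\widetilde S)\to (S,\phi)$ adapted to $K$ so that $\widetilde\phi$ is the monodromy of an open book of the cover, and use that looseness of $K$ lifts to overtwistedness statements that, through Proposition~\ref{prop:covering}, force $c(\phi,L,C)<0$ for some $C$; a negative FDTC for the braid produces a boundary right $P$-bigon (as in the proof of Proposition~\ref{prop:FDTCvsqrv}) — wait, that is precisely the obstruction to quasi-right-veering going the \emph{wrong} way, so instead I would argue directly: realize the overtwisted disk in $M\setminus K$ by an arc in some page, isotope $K$ to a braid $L$ with respect to a suitable open book so that this disk is carried by a bypass along a page arc $\alpha$, and read off $\phi_L(\alpha)\ll_{\sf right}\alpha$ from the geometry of the bypass. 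The key point is that ``$\ll_{\sf right}$'' (disjoint successive interiors) rather than merely ``$\prec_{\sf right}$'' is exactly the condition that survives adding the punctures $P$, i.e.\ that an overtwisted disk \emph{in the complement of $L$} translates to, and is detected by.

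The step I expect to be the main obstacle is the geometric dictionary between overtwisted disks in $M\setminus K$ and the combinatorial condition $\phi_L(\alpha)\ll_{\sf right}\alpha$ — in particular, showing that one may always arrange the overtwisted disk to be ``in Legendrian/braid position'' with respect to some supporting open book so that its boundary is governed by a single page arc and its image, and conversely that a witnessing chain of arcs $\alpha_0\prec_{\sf right}\cdots\prec_{\sf right}\alpha_k$ with disjoint consecutive interiors assembles into an honest overtwisted disk missing $L$. In the unpunctured case this is \cite[Section 3--4]{hkm}; the extra work here is bookkeeping the marked points $P$ throughout, ensuring bigons created in the isotopies do not swallow punctures (which is exactly where $\ll_{\sf right}$, Proposition~\ref{lemma:b-right-p-bigon}, and Proposition~\ref{proposition:sufficient-condition} enter). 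I would also need the freedom, granted by the remark after Proposition~\ref{prop:isotopic-braids} and by the stabilization results of the previous section, to pass between different open book supports of $(M,\xi)$ without affecting quasi-right-veering-ness of the braid, so that ``every open book, every braid representative'' and ``some open book, some braid representative'' match up correctly in the two directions.
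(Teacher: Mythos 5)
Your first direction (a non-quasi-right-veering representative forces looseness) is essentially the paper's argument: the chain $\phi_L(\alpha)=\alpha_0\prec_{\sf right}\alpha_1\prec_{\sf right}\cdots\prec_{\sf right}\alpha_k=\alpha$ with $\Int(\alpha_i)\cap\Int(\alpha_{i+1})=\emptyset$ is converted into an explicit transverse overtwisted disk in $M\setminus L$ via a movie presentation (one negative elliptic point at $\ast_C$, positive elliptic points at the endpoints $\alpha_i(1)$, and one positive hyperbolic point per step whose describing arc is a parallel copy of $\alpha_{i+1}$); the disjointness of consecutive interiors, in the punctured surface, is exactly what lets every describing arc avoid $P$ and hence the disk avoid $L$. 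You have identified the right mechanism, though "exactly as in \cite{hkm}" glosses over the open book foliation bookkeeping that actually produces the embedded disk.

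The genuine gap is in the converse: loose $\Rightarrow$ some representative with respect to some open book is not quasi right-veering. Your branched-covering detour is, as you yourself noticed, a dead end, and the "direct" replacement --- isotope $K$ and the overtwisted disk until the disk is "carried by a bypass along a page arc" --- is a restatement of the difficulty, not an argument. Indeed, for a \emph{fixed} open book and braid representative, looseness does not imply failure of quasi-right-veering: by Theorem~\ref{theorem:depthone}, the given $L$ in the given $(S,\phi)$ is non-quasi-right-veering precisely when $d(B\cup L)=1$, i.e.\ when there is a transverse overtwisted disk meeting the binding exactly once, negatively --- a far stronger condition than mere existence of an overtwisted disk in $M\setminus L$. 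So one must \emph{change} the open book, and the paper's mechanism is the Honda--Kazez--Mati\'c connected-sum trick: since the overtwisted disk lies in $M\setminus K$, write $(M,\xi)=(N,\xi_N)\#(S^3,\xi_{ot})$ with $K\subset N$ and the $S^3$ summand supported by the annulus open book $(A,T_A^{-1})$; choose any open book $(S_N,\phi_N)$ of $(N,\xi_N)$ carrying a braid representative $L_N$ of $K$, and pass to the Murasugi sum $(S,\phi)=(S_N,\phi_N)\ast(A,T_A^{-1})$, which still supports $(M,\xi)$ and still contains $L_N$ as a braid. The co-core $\gamma$ of the plumbed annulus then satisfies $\phi_{L_N}(\gamma)=\phi(\gamma)\ll_{\sf right}\gamma$ (a single step, with disjoint interiors and no punctures involved), so $L_N$ is not quasi right-veering. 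Without this (or an equivalent) device for manufacturing the bad open book, your second direction does not close.
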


Our proof of Theorem~\ref{theorem:main} is a generalization of the proof of \cite[Theorem~2.4]{ik1-2}. 
We may assume that the readers are familiar with basic definitions and properties of open book foliations that can be found in \cite{ik1-1,ik2,ik3}.

\begin{proof}[Proof of Theorem \ref{theorem:main}]
($\Rightarrow$) 
First we show that non-quasi-right-veering braid is loose. 
Assume that a transverse link %$K$ 
$\mathcal{T}$ can be represented by a non-quasi-right-veering closed braid $L$ with respect to an abstract open book $(S,\vphi)$ that supports $(M, \xi)$.
Let $P:=p(L\cap S_0)$. 
That is, there exist a boundary component $C \subset \partial S$ and an arc $\alpha \in \mathcal{A}_{C}(S,P)$ such that there is a sequence of arcs $[\vphi_{L}](\alpha) = \alpha_0 \prec_{\sf right} \alpha_1 \prec_{\sf right} \cdots \prec_{\sf right} \alpha_{k} = \alpha$ with $\Int(\alpha_i) \cap \Int(\alpha_{i+1})= \emptyset$ for all $i=0,\dots,k-1$.

We explicitly construct a transverse overtwisted disk $D_{\sf trans}$ in $M_{(S,\vphi)} \setminus L$ %$M\setminus L$ 
by giving its movie presentation. 
A similar construction can be found in \cite{ik1-2}. 
Here, a {\em transverse overtwisted disk} (see \cite[Definition 4.1]{ik1-1} for the precise definition) is a disk admitting a certain type of open book foliation and is bounded by a transverse push-off of a usual overtwisted disk. 

For $i=0,\ldots,k$ denote the endpoint $\alpha_{i}(1) \in \partial S$ of the arc $\alpha_i$ by $w_i$.
Slightly moving $w_i$ along $\partial S$, if necessary, we may assume that all the points $w_0,\dots,w_{k-1}$ are distinct and still satisfying $\Int(\alpha_i) \cap \Int(\alpha_{i+1})= \emptyset$. Since $[\varphi_L](\alpha)=\alpha_0$ we get $w_0=w_k$. 
Fix a sufficiently small $\varepsilon>0$.
\begin{center}
\begin{figure}[htbp]
\includegraphics*[width=70mm]{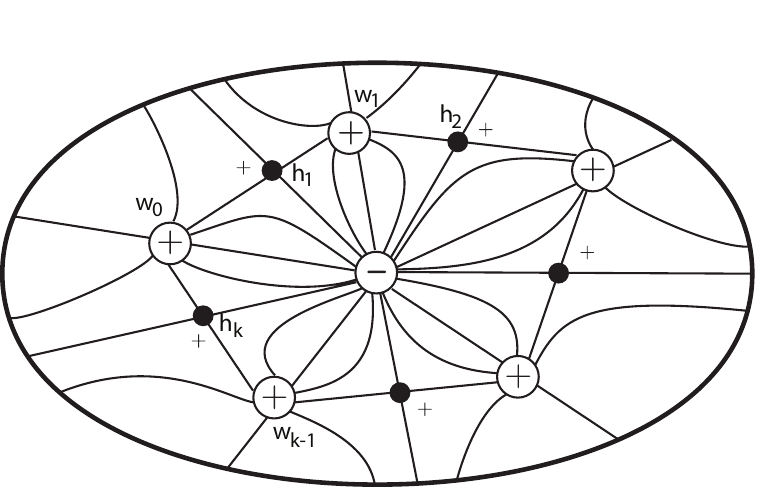}
\caption{Transverse overtwisted disk $D_{\sf trans}$.}
\label{fig:ot-disk}
\end{figure}
\end{center}

The open book foliation of $D_{\sf trans}$ contains 
one negative elliptic point at $\ast_{C}$ and 
$k$ positive elliptic points at $w_0,\dots,w_{k-1}$.  

The movie presentation of $D_{\sf trans}$ on the page $S_0$ 
consists of $(k-1)$ a-arcs emanating from $w_1,\dots,w_{k-1}$ and a b-arc that is a copy of $\alpha_0$ joining $w_0$ and $\ast_{C}$. 
For $t \in [0, \frac{1}{k+1})$ the movie presentation on the page $S_t$ is the same as $S_0$.

The movie presentation on the page $S_{\frac{1}{k+1}}$ 
contains one hyperbolic point, $h_1$, whose describing arc joining $\alpha_0$ and the a-arc from $w_1$ is a parallel copy of $\alpha_{1}$ in $S_{\frac{1}{k+1}-\e}$. 
Since $\Int(\alpha_0) \cap \Int(\alpha_1)=\emptyset$ the interior of the describing  arc is disjoint from all the a-arcs and the b-arc in the page $S_{\frac{1}{k+1}-\e}$. 
Since $\alpha_0 \prec_\ri \alpha_1$ the normal vectors of $D_{\sf trans}$ point {\em out} of the describing arc, thus by \cite[Observation 2.5]{ik4} the sign of the hyperbolic point $h_1$ is positive.
The movie presentation on the page $S_{\frac{1}{k+1}+\varepsilon}$ consists of one b-arc which is a copy of $\alpha_1$ connecting  $w_1$ and $*_C$ and $(k-1)$ a-arcs emanating from  $w_0, w_2,\dots,w_{k-1}$.

\begin{figure}[htbp]
\begin{center}
\includegraphics*[bb= 
128 602 490 710,width=120mm]{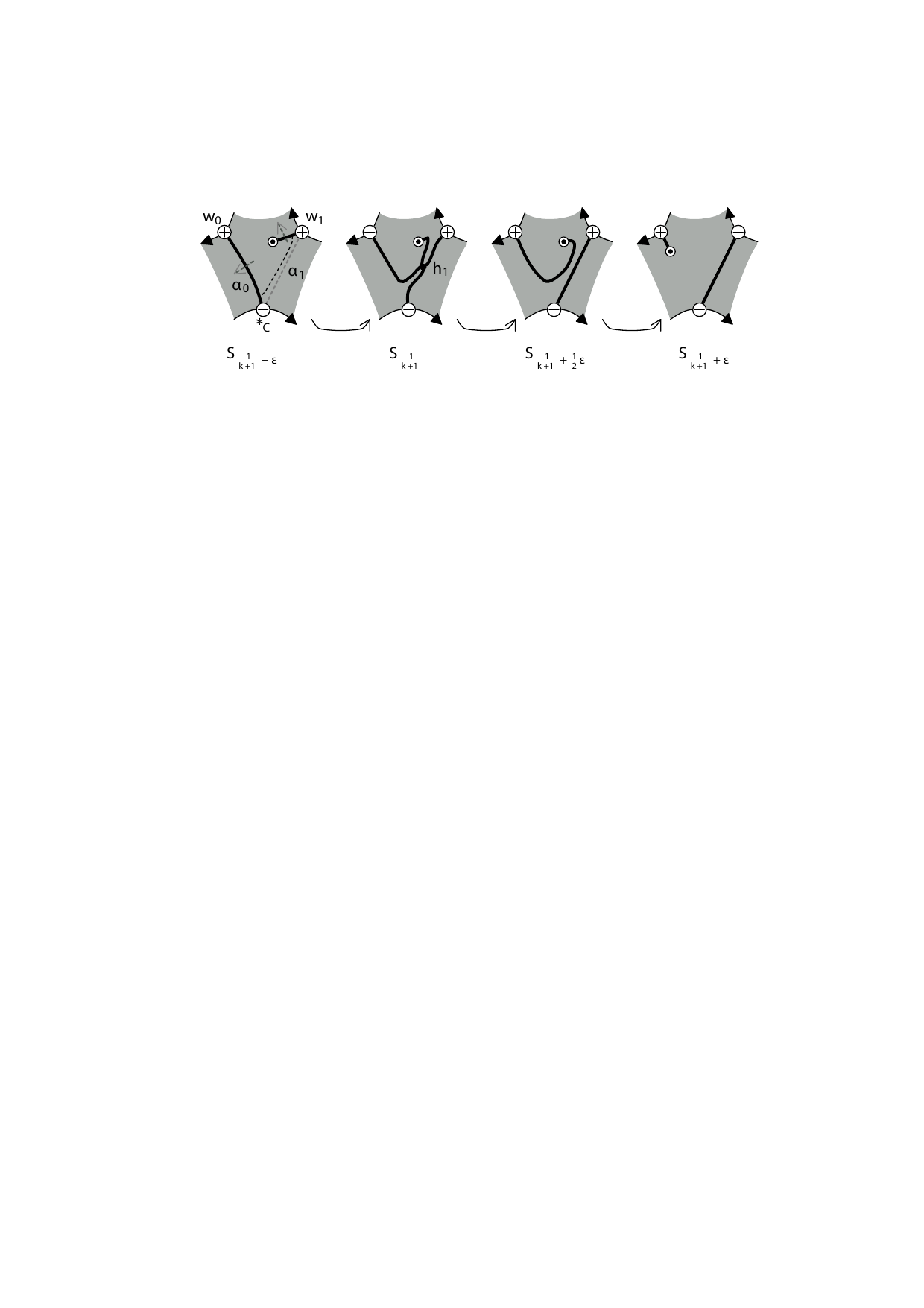}
\caption{Movie for $t\in [\frac{1}{k+1}-\varepsilon,\frac{1}{k+1}+\varepsilon]$: the b-arc $\alpha_0$ disappears and the new b-arc $\alpha_1$ appears at $t=\frac{1}{k+1}$. 
The black dashed arc is the describing arc and the gray dashed arc is $\alpha_1$. 
The black solid arrows indicate the orientations of $\partial S$. 
The gray dashed arrows are normal vectors to $D_{\sf trans}$. }
\label{fig:movie0}
\end{center}
\end{figure}

We inductively apply the above procedure.
Let $j=1,\dots, k$. 
The above paragraph describes $j=1$ case. 

On the page $S_{\frac{j}{k+1}}$ ($j>1$) we put a positive hyperbolic point $h_j$ whose describing arc is a parallel copy of $\alpha_{j}$. 
As a consequence the page $S_{\frac{j}{k+1}+\varepsilon}$ has one b-arc which is a copy of  $\alpha_j$ connecting $w_j$ and $\ast_{C}$ and $(k-1)$ a-arcs emanating from $w_i$ for $i=1,\dots,j-1, j+1,\dots,k-1$.

On the page $S_1$ the movie presentation consists of one b-arc which is a copy of $\alpha_k = \alpha$ and $(k-1)$ a-arcs emanating from $w_0,\dots,w_{k-1}$. 
Since $[\vphi_{L}](\alpha)=\alpha_0$, 
the slices $D_{\sf trans} \cap S_1$ and $D_{\sf trans} \cap S_0$ of $D_{\sf trans}$ can be identified under the distinguished monodromy $\vphi_{L}$.  
In other words the movie presentation gives rise to an embedded surface in 
$M_{(S,\vphi)} \setminus L$. 
The construction tells us that the surface is topologically a disk, and moreover it is a transverse overtwisted disk (see \cite{ik1-2}).

($\Leftarrow$) 
Assume that a transverse link $%L
\mathcal{T} \subset (M, \xi)$ is loose. By taking a neighborhood of an overtwisted disk $D \subset M\setminus %L
\mathcal{T}$, we may regard $(M,\xi)$ as the connected sum $(M',\xi') \# (S^{3},\xi'_{ot})$ such that $%L
\mathcal{T} \subset (M',\xi')$. Here $\xi'_{ot}$ denotes some overtwisted contact structure on $S^{3}$. 
Applying the argument of Honda, Kazez and Mati\'c in \cite[p.444]{hkm} to $(S^{3},\xi'_{ot})$ we may regard $(M,\xi)$ as $(N,\xi_N) \# (S^{3},\xi_{ot})$ such that  $%L
\mathcal{T} \subset (N,\xi_N)$, where $(S^{3},\xi_{ot})$ denotes the overtwisted contact structure supported by $(A,T_{A}^{-1})$ an annulus open book  with a left-handed Dehn twist about a core curve of $A$.

Take an abstract open book $(S_{N},\vphi_N)$ supporting $(N,\xi_N)$ and a closed braid $L_N$ representing %$L$. 
$\mathcal{T}$.
Then the original contact 3-manifold $(M,\xi)$ is supported by the open book $(S, \vphi) := (S_N,\vphi_N) * (A, T_A^{-1})$ where $\ast$ represents a Murasugi sum of the open books (cf. \cite[Definition 2.16]{Et}) and $L_{N}$ is in braid position with respect to $(S, \vphi)$. 

Let $\gamma$ be the isotopy class of a co-core of the attached 1-handle $S\setminus S_N$. 
We have $$[\vphi_{L_N}]\gamma= [T_A^{-1}]\gamma \ll_{\sf right} \gamma$$ hence $L_N$ is not quasi-right-veering.  
\end{proof}

\begin{corollary}
A transverse link $\mathcal{T}$ in a contact 3-manifold $(M,\xi)$ is non-loose if and only if for every closed braid representative $L$ of $\mathcal{T}$ with respect to every abstract open book $(S, \vphi)$ that supports $(M,\xi)$ and for every boundary component $C$ and %isotopy class $\gamma$ of a properly embedded arc in $S\setminus P$, 
$\gamma \in \A$ where $P:=p(L\cap S)$ at least one of the following holds:
\begin{enumerate}
\item
$\gamma=[\vphi_L](\gamma)$.
\item
$\gamma\prec_\ri[\vphi_L](\gamma)$.
\item
$\gamma$ and $[\vphi_L](\gamma)$ cobound bigons that contain points of $P$. 
\end{enumerate}
\end{corollary}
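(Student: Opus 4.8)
The plan is to deduce the corollary from Theorem~\ref{theorem:main} by unwinding the definition of quasi right-veering. By Theorem~\ref{theorem:main} and Definition~\ref{defn:qveer}, $K$ is non-loose precisely when, for every braid representative $L$ of $K$ with respect to every supporting open book $(S,\phi)$, every boundary component $C$ of $S$, and every $\gamma\in\mathcal{A}_C(S,P)$, one has $\phi_L(\gamma)\not\ll_\ri\gamma$. Since $\phi_L$ fixes $\partial S$ pointwise, a properly embedded arc $\gamma\subset S\setminus P$ (with a chosen endpoint placed at $\ast_C$) and its image $\phi_L(\gamma)$ share their endpoints and both lie in $\mathcal{A}_C(S,P)$, so the corollary amounts to the equivalence, ranging over all such $L,C,\gamma$, between $\phi_L(\gamma)\not\ll_\ri\gamma$ and ``one of (1)--(3) holds for $\gamma$''. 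Throughout I use the elementary fact that $a\ll_\ri b$ implies $a\prec_\ri b$: a chain as in $(\ref{eqn:defn-ll})$ joining $a$ to $b$ is nontrivial, so $a\prec_\ri b$ by transitivity of the total order $\prec_\ri$; in particular $a\ne b$.

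For the forward direction, fix $L,C,\gamma$ and assume $\phi_L(\gamma)\not\ll_\ri\gamma$. As $\prec_\ri$ is a total order on $\mathcal{A}_C(S,P)$, exactly one of $\gamma\prec_\ri\phi_L(\gamma)$, $\gamma=\phi_L(\gamma)$, $\phi_L(\gamma)\prec_\ri\gamma$ holds; the first two are alternatives (2) and (1). In the third case, $\phi_L(\gamma)\prec_\ri\gamma$ together with $\phi_L(\gamma)\not\ll_\ri\gamma$ forces, by the contrapositive of Proposition~\ref{proposition:sufficient-condition} applied with $\alpha=\phi_L(\gamma)$ and $\beta=\gamma$, that $\phi_L(\gamma)$ and $\gamma$ cobound a bigon containing points of $P$, which is alternative (3).

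For the reverse direction I would argue contrapositively: assuming $K$ is loose, I exhibit a braid representative $L$ and an arc $\gamma$ for which all of (1)--(3) fail. I would return to the construction in the ($\Leftarrow$) part of the proof of Theorem~\ref{theorem:main}: writing $(M,\xi)$ as $(N,\xi_N)\#(S^{3},\xi_{ot})$ with $(S^{3},\xi_{ot})$ supported by the annulus open book $(A,T_{A}^{-1})$, one gets a supporting open book $(S,\phi)=(S_N,\phi_N)*(A,T_{A}^{-1})$ and a braid representative $L$ of $K$ so that the co-core $\gamma$ of the attached $1$-handle satisfies $\phi_L(\gamma)=\phi(\gamma)\ll_\ri\gamma$. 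The extra observation is that $\gamma$ and $\phi(\gamma)=T_{A}^{-1}(\gamma)$ both lie in the annular region $A$, which is disjoint from $P=p(L\cap S_0)$ since the braid $L$ misses the plumbing handle; hence $\gamma$ and $\phi(\gamma)$ cobound no bigon containing a point of $P$, so (3) fails for this $\gamma$. Since $\phi(\gamma)\ll_\ri\gamma$ gives $\phi(\gamma)\prec_\ri\gamma$, we have $\phi(\gamma)\ne\gamma$, so (1) fails, and $\gamma\not\prec_\ri\phi(\gamma)$ by totality of $\prec_\ri$, so (2) fails. Combining the two directions over all $L,C,\gamma$ with Theorem~\ref{theorem:main} finishes the proof. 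The only step needing care is this appeal to the internals of the proof of Theorem~\ref{theorem:main} --- in particular, verifying that the co-core arc and its monodromy image stay away from $P$, so that (3) is genuinely false; this is what lets us avoid needing a converse to Proposition~\ref{proposition:sufficient-condition} (which is not available) to handle an arbitrary witnessing arc. Everything else is formal.
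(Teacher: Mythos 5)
Your proposal is correct and follows essentially the same route as the paper: the forward direction is exactly the paper's application of Proposition~\ref{proposition:sufficient-condition} together with Theorem~\ref{theorem:main} (you state it directly, the paper contrapositively), and the reverse direction reuses the co-core arc from the ($\Leftarrow$) part of the proof of Theorem~\ref{theorem:main}, just as the paper indicates. The only difference is that you make explicit the detail the paper leaves implicit in "exactly the same proof," namely that the co-core arc and its image $\phi_L(\gamma)$ stay in the punctureless annular region, so condition (3) genuinely fails; this is a worthwhile clarification but not a different argument.
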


\begin{proof}
$(\Rightarrow)$ 
If there exists $\gamma$ such that $[\vphi_L](\gamma) \prec_\ri \gamma$ and no marked bigons are cobounded by $[\vphi_L](\gamma)$ and $\gamma$, then 
Proposition~\ref{proposition:sufficient-condition}
shows that $[\vphi_L](\gamma) \ll_\ri \gamma$. 
Thus $[\vphi_L] \in\MCG(S,P)$ is not quasi-right-veering.   
Then Theorem~\ref{theorem:main} shows that % $K$
$\mathcal{T}$ is loose.    

$(\Leftarrow)$
This implication holds by exactly the same proof of ($\Leftarrow$ part of) Theorem~\ref{theorem:main}. 
\end{proof}

\section{Depth of transverse links}\label{sec:depth}

Theorem \ref{theorem:main} can be used to study the \emph{depth} that measures non-looseness of  transverse links and is introduced by Baker and Onaran \cite{bo}. 

Let $F$ be an oriented surface in an oriented 3-manifold $M$ and $K\subset M$ be an oriented link that transversely intersects $F$.
We denote the number of  intersection points of $K$ and $F$ by $\#(K \cap F)$, which is not necessarily realizing the geometric intersection number. 
We also denote the number of positive and negative intersection points of $K$ and $F$ by $\#^{+}(K \cap F)$ and $\#^{-}(K \cap F)$, respectively.   
We have $\#(K \cap F) = \#^{+}(K \cap F)+\#^{-}(K \cap F)$.

\begin{definition}\cite[p.1031 and 1057]{bo}
Let $K$ be a transverse link or a Legendrian link in $(M, \xi)$. 
The depth $d(K)$ of $K$ is defined by:
\[ d(K) = \min \{\#(K \cap D)  \: | \: D \text{ is an overtwisted disk in } (M,\xi) \} \]
\end{definition}

Assuming that $(M,\xi)$ is overtwisted, we see that $K$ is loose if and only if $d(K)=0$.

In the following $K$ represents a transverse link.

First we give a new interpretation of the depth $d(K)$ in terms of open book foliations. 
Let $(S,\phi)$ be an open book supporting a contact 3-manifold $(M,\xi)$. 
Recall that existence of a \emph{transverse} overtwisted disk  in the open book $(S, \phi)$ (see \cite[Definition 4.1]{ik1-1}) is equivalent to existence of an overtwisted disk in $(M, \xi)$. 

%First we show that the depth of $K$ is equal to the minimal number of the negative intersection points of $K$ with a \emph{transverse} overtwisted disk (\cite[Definition 4.1]{ik1-1}). 
%The same result is proved in \cite{ik6} for the case when $K$ is the binding of an open book. 

\begin{theorem}
\label{theorem:depth}
For a transverse link $K$ in $(M, \xi)$ let: 
\begin{equation*}
d^-_{\sf trans}(K) :=\min\left\{ \#^{-}(K' \cap D)  \: \left| \: 
\begin{array}{l}
K' \text{ is a link transversely isotopic to } K, \\
D \text{ is a transverse overtwisted disk in } (S, \phi). 
\end{array}
\right.
\right\} 
\end{equation*}
Then $d(K)=d^-_{\sf trans}(K)$. 
\end{theorem}

A special case where $K$ is the binding of an open book the equality is proved in \cite{ik6}.

The theorem highlights the difference between a  transverse overtwisted disk (whose boundary is a transverse unknot) and a usual overtwisted disk (whose boundary is a Legendrian unknot).

Applications of the theorem can be found in Theorems~\ref{theorem:depthone} and \ref{theorem:nonloose}.  

\begin{proof}
We first show that $d(K) \leq d_{\sf trans}(K)$.

Let $D_{\sf trans}$ and $K_0$ be transverse overtwisted disk and transverse link which attain $d_{\sf trans}(K)$. Therefore, $d_{\sf trans}(K) =\#^{-}(K_0 \cap D_{\sf trans}).$
By the structural stability theorem \cite[Theorem 2.21]{ik1-1}, we may assume that 
\begin{enumerate}
\item[(a)] The characteristic foliation $\cF(D_{\sf trans})$ and the open book foliation $\F(D_{\sf trans})$ are topologically conjugate.
\end{enumerate}

Let $G_{++}(\cF(D_{\sf trans}))$ (resp. $G_{--}(\cF(D_{\sf trans}))$) be the Giroux graph \cite[Page 646]{gi} consisting of the positive (resp. negative) elliptic points and the stable (resp. unstable) separatrices of positive (resp. negative) hyperbolic points. 
By the assumption (a), these graphs are identified with the corresponding graphs $G_{++}:= G_{++}(\F(D_{\sf trans}))$ and $G_{--}:=G_{--}(\F(D_{\sf trans}))$  in the open book foliation $\F(D_{\sf trans})$, see \cite[Definition 2.17]{ik1-1} for the definitions.

Take small neighborhoods $N_{+}, N_{-} \subset D_{\sf trans}$ of the graphs $G_{++}(\cF(D_{\sf trans}))$ and $G_{--}(\cF(D_{\sf trans}))$, repsectively. 
By transverse isotopy we move $K_0$ without introducing new intersection points with $D_{\sf trans}$ so that:  
\begin{enumerate}
\item[(b)]
The intersection $K_0 \cap D_{\sf trans}$ is disjoint from the region $N_{+} \cup N_{-}$.
\end{enumerate} 

We apply Giroux elimination lemma \cite[Lemma 3.3]{Gconvex} to remove all the positive elliptic and positive hyperbolic points of $\cF(D_{\sf trans})$ (see Figure~ \ref{fig:transtousual}). 
Call the resulting disk $D'$. 
By (a) and the definition of a transverse overtwisted disk, the characteristic foliation $\cF(D')$ has a unique negative elliptic point enclosed by a circle leaf.  
We can find a usual overtwisted disc $D \subset D'$ . 
Since the Giroux elimination is supported on $N_{+} \cup N_{-}$, the condition (b) implies that this process does not produce new intersections, i.e., $K_0 \cap D_{\sf trans} = K_0 \cap D'$. 

\begin{figure}[htbp]
   \centering
   \includegraphics*[width=100mm]{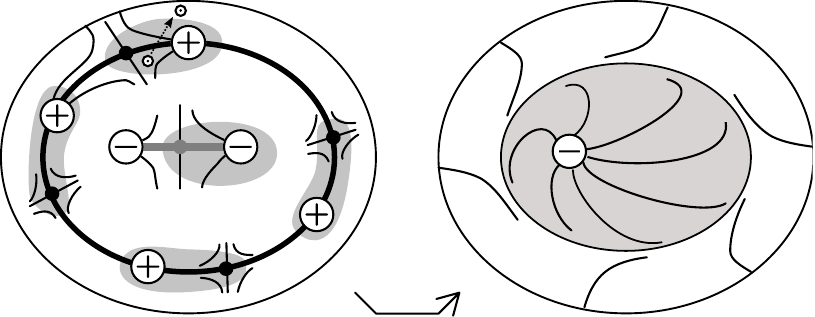} % requires the graphicx package
   \caption{
(Left) From a transverse overtwisted disk to a usual overtwisted disk. 
The graphs $G_{++}$ and $G_{--}$ are depicted by black and gray bold lines, respectively. 
A dot $\odot$ represents an intersection of $K$ and $D_{\sf trans}$ which is moved away from the gray regions before applying the Giroux elimination lemma to the gray regions.
(Right) Disk $D'$ and an overtwisted disk $D'$ (highlighted in gray).}
\label{fig:transtousual}
\end{figure}

The set of transverse links up to transverse isotopy is naturally identified, through positive transverse push-off, with the set of Legendrian links up to Legendrian isotopy and negative stabilization \cite{efm,eh}.

The proof of \cite[Theorem 4.1.4]{bo} shows that every positive intersection of a Legendrian link and an overtwisted disk can be removed by a negative stabilization of the Legendrian link.

Therefore each positive intersection of $K_0$ and the overtwisted disk $D$ can be removed by a suitable transverse isotopy. 
That is, there exists a link $K_1$ that is transversely isotopic to $K_0$ such that $\#(K_1\cap D) = \#^-(K_1 \cap D) = \#^-(K_0\cap D)$. 
We conclude
\[ d(K) \leq \#(K_1\cap D) = \#^-(K_0 \cap D) \leq \#^-(K_0 \cap D') = \#^-(K_0 \cap D_{\sf trans}) = d_{\sf trans}(K).\]

Next we show that $d(K) \geq d_{\sf trans}(K)$. 
Let $D$ be an overtwisted disc in $(M, \xi)$ that intersects $K$ at $d(K)$ points.

Take a slightly larger disc, $D'$, which contains $D$ in its interior and is bounded by a positive transverse push-off of the Legendrian unknot $\partial D$ so that $D' \cap K = D\cap K$. 

Using transverse isotopy we make $K$ disjoint from the binding of the open book.
Following Pavalescu's proof of Alexander theorem \cite[Theorem 3.2]{pav} one can find an isotopy of $M$ preserving each page of the open book set-wise and taking the non-braided part of $\partial D' \cup K$ (subsets which are not positively transverse to pages) into a neighborhood of the binding.

Inside the neighborhood of the binding we make $\partial D' \cup K$ braided  with respect to the open book using \cite{Ben}. 
We call the resulting link and disk $K'$ and $D''$, respectively. 
It is possible that new positive intersection points of $D''$ and $K'$ may be created if a component of $K$ is transversely isotopic to a binding component.
However no new negative intersection points will be introduced. 
Hence $\#^{-}(K'\cap D'') = \#^{-}(K \cap D') \leq d(K)$.

Fixing $\partial D''$ and $K'$ and following the proof of \cite[Theorem 3.3]{ik2} we perturb $D''$ so that the resulting disk, $D'''$, admits an essential open book foliation. 
This process can be done without introducing new intersection points with $K'$ hence $\#^{-}(K' \cap D''')=\#^{-}(K'\cap D'')$.

Since the Bennequin-Eliashberg inequality does not hold 
$$
{\rm sl}(\partial D''', [D''']) = {\rm sl}(\partial D'', [D'']) = {\rm sl}(\partial D', [D']) = {\rm tb}(\partial D, [D]) - {\rm rot}(\partial D, [D]) = 1 \nleq -\chi(D''')
$$ 
we can apply the proof of \cite[Theorem 4.3]{ik1-1} to $D'''$ and obtain a transverse overtwisted disc, $D_{\sf trans}$. 
By the nature of this construction we have 
\begin{eqnarray}
\#^-(K' \cap D_{\sf trans}) &=& \#^-(K' \cap D''') \nonumber  \\
\#^+(K' \cap D_{\sf trans}) &\geq& \#^+(K' \cap D''') \label{+}
\end{eqnarray}
where a strict inequality `$>$' in (\ref{+}) may hold only when a component of $K' $ is transversely isotopic to  a binding component. 
Summing up, we have
$$d_{\sf trans}(K) \leq \#^-(K'\cap D_{\sf trans}) = \#^-(K' \cap D''') = \#^{-}(K'\cap D'') =  \#^{-}(K \cap D')\leq d(K).$$
\end{proof}

Many properties of quasi-right-veering are studied Sections \ref{subsection:QRV-RV} and \ref{subsection:transv-links}. 
The following Theorem~\ref{theorem:depthone}  gives another property of quasi-right-veering.  
One may also apply Theorem~\ref{theorem:depthone} to the study of knots and links of large depth.

\begin{definition}[Axis-augmented transverse link for a closed braid]
Let $(S,\phi)$ be an open book decomposition of a contact 3-manifold $(M,\xi)$ and $\cL=[((S,\varphi),L)]$ be a closed braid with respect to an open book $(S,\phi)$. 
The \emph{axis-augmented transverse link for $\cL$} is a transverse link represented by $B\cup L$, where $B$ denotes the binding of an abstract open book $(S,\vphi)$ supporting $(M,\xi)$. 
\end{definition}

\begin{lemma}
The axis-augmented transverse link for a closed braid $\cL$ is well-defined up to contactomorphism.
\end{lemma}

\begin{proof}
Suppose that $\vphi$ and $\vphi' \in \Diff(S, \partial S)$ are isotopic. 
Fix a contact structure $\xi_{(S, \vphi)}$ on $M_{(S, \vphi)}$ (resp. $\xi_{(S, \vphi')}$ on $M_{(S, \vphi')}$) that is supported by $(S, \vphi)$ (resp. $(S, \vphi')$).  
Starting with an isotopy between $\vphi$ and $\vphi'$ 
we have a contactomorphism $\varrho 
: (M_{(S,\vphi)},\xi_{(S,\vphi)}) \rightarrow (M_{(S,\vphi')},\xi_{(S,\vphi')})$ as constructed in (\ref{eq:vrho}). 
Let $B$ (resp. $B'$) be the binding of the open book decomposition on $M_{(S,\vphi)}$ (resp. $M_{(S,\vphi')}$). 
When closed braids $((S,\vphi),L)$ and $((S,\vphi'),L')$ are equivalent, the link 
$\varrho(B\cup L)$ is transversely isotopic to $B' \cup L'$. Thus, up to a choice of identification $(M_{(S,\vphi)},\xi_{(S,\vphi)}) \cong (M,\xi)$, the transverse link type of $B \cup L$ is uniquely determined.
\end{proof}

%Thus, $d(K)=d(B\cup L)$ is an invariant of $\cL$. 

%\begin{theorem}
%\label{theorem:depthone}
%Let $(S, \vphi)$ be an abstract open book supporting $(M, \xi)$. 
%Let $B$ denote the binding of the induced open book decomposition  of $M$ and $L$ be a closed braid with respect to $(S, \vphi)$. 
%Let $K := B \cup L$ which is a transverse link in $(M, \xi)$.
%The depth $d(K)=1$ if and only if the braid $\cL=[(S, \vphi), L)]$ is not quasi-right-veering. 
%\end{theorem}

\begin{theorem}
\label{theorem:depthone}
Let $\cL$ be a closed braid in open book $(S,\phi)$.
The depth of the axis-augmented transverse link for $\cL$ is one if and only if the braid $\cL$ is not quasi-right-veering. 
\end{theorem}

When the closed braid $L$ is empty we can reprove the following:

\begin{corollary}\label{cor:d(B)}
 \cite[Corollary 1]{ik6}
The depth $d(B) = 1$ if and only if $\phi = [\vphi]$ is not right-veering.   
\end{corollary}

\begin{proof}[Proof of Theorem~\ref{theorem:depthone}]

In the following, we take an abstract open book $(S,\vphi)$ and a  closed braid $L$ so that $\cL=[((S,\vphi),L)]$, and let $K=L\cup B$ where $B$ denotes the binding of the open book decomposition on $M_{(S,\vphi)}$.

($\Leftarrow$) 
Suppose that the braid $\cL$ is not quasi-right-veering.
As in the proof of Theorem~\ref{theorem:main}, we can construct a transverse overtwisted disk with only one negative elliptic point in the complement of $L$. 
By Theorem~\ref{theorem:depth} we have $d(K) \leq 1$. 
On the other hand, since the binding of any open book is non-loose \cite{ev} and $K$ contains the binding $B$ we have $d(K)\geq d(B)\geq 1$.  

($\Rightarrow$) 
Assume that $d(K)=1$. 
Let $D$ be an overtwisted disk in $(M, \xi)$ satisfying $\#(K\cap D)=d(K)=1$. 
Since the complement
of the binding of a supporting open book decomposition is tight \cite{ev}, $\#(D\cap K) = \#(D\cap B)=1$ and $\#(D\cap L)=0$. 

Following the proof of Theorem ~\ref{theorem:depth} (the second half showing $d(K) \geq d_{\sf trans}(K)$) we can construct starting from $D$ a transverse overtwisted disk $D_{\sf trans}$ in the complement of $L$ such that $$\#^-(K \cap D_{\sf trans}) = \#^-(B\cap D_{\sf trans}) =1.$$

Let $v \in B \cap D_{\sf trans}$ denote the unique negative intersection point. 
That is, $v$ is the unique negative elliptic point in the open book foliation $\F(D_{\sf trans})$ of $D_{\sf trans}$. 
Assume that $v$ lies on a boundary component $C$ of $S$.
For a regular page $S_t$ of the open book let $b_t \subset S_t$ denote the unique b-arc in $\F(D_{\sf trans})$ that ends at $v$. 
We use $v$ as the base point $\ast_{C}$ of $C$. 
Recall the projection map (\ref{eq:projection-p}) $$p: M_{(S, \vphi)}  \rightarrow S.$$  
We view the image $p(b_t)$ as an element of $\mathcal{A}_{C}(S,P)$ where $P= p(L \cap S_0)$ is a set of punctures given by the intersection of the braid $L$ and the page $S_0$. 
%The equations (\ref{eq:conjugation}) in the proof of Proposition~\ref{prop:isotopic-braids} show that it is not necessary for the rest of the argument that $P$ is included in a neighborhood of some boundary component. 

Let $S_{t_1}, \ldots, S_{t_k}$ $(0<t_1<\cdots < t_k <1)$ be the singular pages of the open book foliation $\F(D_{\sf trans})$  and  $\varepsilon>0$ be a sufficiently small number such that  $S_{t_i}$ is the only singular page in the interval $(t_i - \e, t_i+\e)$.
Since $D_{\sf trans}$ is a transverse overtwisted disk with one negative elliptic point, by the definition of a transverse overtwisted disk \cite[Definition 4.1]{ik1-1}, all the hyperbolic points of $\F(D_{\sf trans})$ are positive.  
This shows that $p(b_{t_{i}-\varepsilon}) \prec_{\sf right} p(b_{t_{i}+\varepsilon})$ with $\Int(p(b_{t_{i}-\varepsilon})) \cap \Int(p(b_{t_{i}+\varepsilon}))= \emptyset$ for all $i=1,\dots,k$ (see Figure \ref{fig:key} (ii), or consult Observation 2.5 of \cite{ik4}). 
Let us put $$\gamma_i:= p(b_{t_{i} + \varepsilon}) = p( b_{t_{i+1}-\varepsilon}) \in \A.$$ 
Then the sequence of arcs satisfies 
\[ [\vphi_{L}](p(b_1))= p(b_0) = \gamma_0 \prec_{\ri} \gamma_1 \prec_{\ri} \cdots \prec_{\ri} \gamma_{k} = p(b_{1})\]
and $\Int(\gamma_{i}) \cap \Int( \gamma_{i+1}) = \emptyset$;
hence, $[\vphi_L] \in \MCG(S,P)$ is not quasi-right-veering. 
\begin{figure}[htbp]
\begin{center}
\includegraphics*[%bb= 120 530 484 730,
width=110mm]{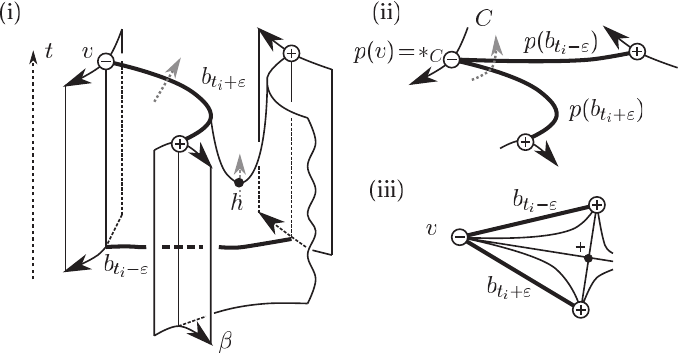}
\caption{(i): A positive hyperbolic point $h$ (saddle tangency). The gray dashed arrow indicate positive normal vectors to the surface. 
Black arrows indicate the orientations of the binding components. 
(ii) Comparison of the b-arcs $p(b_{t_i -\varepsilon})$ and $p(b_{t_i + \varepsilon})$ projected to $S$. 
(iii) Corresponding portion in the open book foliation $\F(D_{\sf trans})$. 
}
\label{fig:key}
\end{center}
\end{figure}
\end{proof}

\section{Very positive FDTC and non-loose links}\label{sec:non-loose} 

Proposition \ref{prop:FDTCvsqrv} and Theorem~\ref{theorem:main} show that negative FDTC $c(\phi, \cL, C)< 0$ does not always imply looseness of the closed braid $\cL$. 
This makes a sharp contrast to the empty braid case, where the negative FDTC $c(\phi, C)< 0$ implies that the contact structure $\xi_{(S, \phi)}$ is overtwisted. 

On the other hand, if the FDTC is very positive then there is some similarity between non-empty braid case and empty braid case. 
In \cite[Corollary 1.2]{ik4} it is proved that a planar open book $(S,\phi)$ with $c(\phi,C)>1$ for every boundary component $C$ supports a tight contact structure. 
We may regard this as a special case ($\cL=\emptyset$) of the following theorem.

\begin{theorem}
\label{theorem:nonloose}
Let $(S,\phi)$ be a planar open book decomposition of a contact 3-manifold $(M,\xi)$. If a transverse link $\mathcal{T} \subset (M,\xi)$ is represented by a closed braid $\cL=[(S,\vphi),L]$ such that $c(\phi,\cL,C)>1$ for every boundary component $C$ of $S$, then $\mathcal{T}$ is non-loose.
\end{theorem}

\begin{proof}

Let $(S,\vphi)$ be an abstract open book that supports $(M, \xi)$, such that $P:=p(L\cap S_0) \subset \nu(\partial S)$ and $\vphi=id$ on $\nu(\partial S)$, and let $L$ be a closed braid with respect to $(S,\vphi)$ that represents $\cL$.
By (\ref{eqML}) we have
$$\left( (S \setminus P) \times [0,1] \right)/ 
\sim_{\varphi_L} \  \simeq \  %M
M_{(S,\vphi)}\setminus L.$$  
%\marginpar{\tiny 3 sentences used to be here were removed}
%Recall the forgetful map $f: \MCG(S,P)\to \MCG(S)$ in the generalized Birman exact sequence (\ref{eqn:Birman}). 
%Note that $f([\vphi_L])=\phi \in \MCG(S)$. 
%In the following argument, we may use the abstract open book $(S, P, \vphi_L)$} instead of $(S, \vphi)$. 

Assume that $L$ is loose. 
By Theorem~\ref{theorem:depth} there exists a transverse overtwisted disk $D$ in $M_{(S,\vphi)} \setminus L$.  %\marginpar{\tiny Thm5.2 only says $d^-_{\sf trans}(K)=0$. It doesn't say $d^+_{\sf trans}(K)=0$} 
Applying the proof of \cite[Theorem 1.1]{ik4} to the diffeomorphism $\vphi_L\in\Diff(S, P, \partial S)$, we can construct a transverse overtwisted disk $D'$ in $M_{(S,\vphi)} \setminus L$ such that every b-arc of $\F(D')$ ending at a valence $\leq 1$ vertex of the graph $G_{--}(D')$ is projected to an essential arc in the punctured page $S \setminus P$ under the map $p:M_{(S, \vphi)}\to S$ in (\ref{eq:projection-p}). 
Using \cite[Lemma 5.11]{ik2} the existence of such a disk $D'$ implies that $c(\phi, \cL, C)=c([\vphi_L], C) \leq 1$ for some boundary component $C$ of $S $.  
\end{proof}    

\section{Comparison of three definitions of right-veering}
\label{sec:comparison}

In this section we discuss comparison of several proposed  definitions of right-veering-ness for mapping classes in $\MCG(S, P)$.

\begin{definition}
We say that an arc $\gamma:[0,1] \rightarrow S$ is $\partial$-$P$ (resp. $\partial$-$\partial$) arc if the following are all satisfied: 
\begin{enumerate}
\item 
$\gamma(0) \in \partial S$ and $\gamma$ is transverse to $\partial S$ at $\gamma(0)$.
\item 
$\gamma(t) \in \Int(S) \setminus P$ for $t\in (0,1)$.
\item 
$\gamma(1) \in P$ (resp.  $\gamma(1) \in \partial S$ and $\gamma$ is transverse to $\partial S$ at $\gamma(1)$).
\item 
$\Int(\gamma)$ is embedded in $S\setminus P$ and not boundary-parallel.
\end{enumerate}
For a boundary component $C$ of $S$, we say that a $\partial$-$P$ or $\partial$-$\partial$ arc is \emph{based on $C$} if $\gamma(0) \in C$.
\end{definition}

As natural generalizations of the right-veering-ness for $\phi\in\MCG(S)$ to $\psi\in\MCG(S,P)$ there are three candidates. 
\begin{definition}
\label{def:dd-right-veering}
For a boundary component $C$ of $S$ we say that $\psi \in \MCG(S,P)$ is
\begin{enumerate}
\item 
$\partial$-$(\partial+P)$ {\em right-veering with respect to $C$} if $\gamma \prec_{\ri} \psi(\gamma)$ or $\gamma = \psi(\gamma)$ for all $\partial$-$\partial$ and $\partial$-$P$ arcs $\gamma$ based on $C$.
\item 
$\partial$-$\partial$ {\em right-veering with respect to $C$} if $\gamma \prec_{\ri} \psi(\gamma)$ or $\gamma = \psi(\gamma)$ for all $\partial$-$\partial$ arcs $\gamma$ based on $C$.
\item 
$\partial$-$P$ {\em right-veering with respect to $C$} if $\gamma \prec_{\ri} \psi(\gamma)$ or $\gamma = \psi(\gamma)$ for all $\partial$-$P$ arcs $\gamma$ based on $C$.

\item
We say that $\psi \in \MCG(S,P)$ is $\partial$-$(\partial+P)$, $\partial$-$\partial$, or, $\partial$-$P$ {\em right-veering}, respectively, if $\psi$ is $\partial$-$(\partial+P)$, $\partial$-$\partial$, or, $\partial$-$P$ right-veering, respectively, with respect to every boundary component of $S$.
\end{enumerate}
\end{definition}

The $\partial$-$\partial$ right-veering is used by Baldwin, Vela-Vick and V\'ertesi in \cite{bvv}.
It is easy to see that our Definition~\ref{defn:right-veering}  of  right-veering is equivalent to the $\partial$-$\partial$ right-veering. 
Recall that in Definition~\ref{defn:right-veering} we only consider $\partial$-$\partial$ arcs starting from the distinguished base point $\ast_C \in C$. This restriction is just to define the orderings $\prec_{\ri}$ and $\ll_{\ri}$ on $\A$.

On the other hand, Baldwin and Grigsby \cite{bg} and Plamenevskaya \cite{pl}  use the notion of $\partial$-$P$ right-veering to study the classical braid group $\MCG(D^{2},P)$.

Baldwin and Grigsby ask in \cite[Remark 3.3]{bg} whether these two superficially different notions of ``right-veering'' are equivalent or not.

The following example shows that the notions (2) and (3) of ``right-veering with respect to $C$'' are in general not exactly the same: 

\begin{example}
\label{example:diff}
Assume that $S$ has more than one boundary component and non-empty marked points $P \subset \Int(S)$. 
Let $C$ and $C'$ be distinct boundary components. 
Clearly $T_{C'}^{-1} \in \MCG(S,P)$ is not $\partial$-$\partial$ right-veering with respect to $C$. 
On the other hand $T_{C'}^{-1}$ preserves all $\partial$-$P$ arcs based on $C$. This means that $T_{C'}^{-1}$ is $\partial$-$P$ right-veering with respect to $C$.

More generally, let $\psi \in \MCG(S,P)$ be a $\partial$-$P$ right-veering map with respect to $C$, and 
suppose that $\psi(\gamma)=\gamma$ for some $\partial$-$\partial$ arc $\gamma$ connecting distinct $C$ and $C'$. 
Then  $T_{C'}^{-1}\psi$ is still $\partial$-$P$ right-veering with respect to $C$, but is not  $\partial$-$\partial$ right-veering with respect to $C$ since $T_{C'}^{-1}\psi(\gamma) = T_{C'}^{-1}(\gamma) \prec_{\ri} \gamma$.
\end{example}

It turns out that the difference between $\partial$-$\partial$ right-veering and $\partial$-$P$ right-veering only shows up when $\psi\in \MCG(S,P)$ involves negative Dehn twists along boundary components like in Example \ref{example:diff}. 

\begin{definition}\label{def:special}
We say that $\psi \in \MCG(S,P)$ is {\em special} with respect to $C$ if the following two conditions are satisfied. 
\begin{itemize}
\item
$\psi$ is not $\partial$-$\partial$ right-veering with respect to $C$. 
\item
If a $\partial$-$\partial$ arc $\gamma$ that is based on $C$ and ending at $C'$ has $\psi(\gamma) \prec_\ri \gamma$ then $C' \neq C$ and $\psi(\gamma)= T_{C'}^{-n}(\gamma)$ for some $n>0$.
\end{itemize}
That is, a special map $\psi$ is not $\partial$-$\partial$ right-veering with respect to $C$ only because of negative Dehn twists about some other boundary component $C'$.
\end{definition}

\begin{theorem}
\label{theorem:equivalencerv}
Let $\psi \in \MCG(S,P)$. 
\begin{enumerate}
\item If $\psi$ is $\partial$-$\partial$ right-veering with respect to $C$, then $\psi$ is $\partial$-$P$ right-veering with respect to $C$.
\item If $\psi$ is $\partial$-$P$ right-veering with respect to $C$ then either 
\begin{itemize}
\item
$\psi$ is $\partial$-$\partial$ right-veering with respect to $C$, or, 
\item
$\psi$ is special with respect to $C$.
\end{itemize}
\end{enumerate} 
\end{theorem}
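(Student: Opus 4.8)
The plan is to prove the two implications more or less independently, in both cases reducing statements about $\partial$-$P$ arcs to statements about $\partial$-$\partial$ arcs (and vice versa) by a ``blow-up/blow-down'' correspondence between arcs ending at a marked point $p \in P$ and arcs that encircle $p$ and return to the boundary. Concretely, given a $\partial$-$P$ arc $\gamma$ ending at $p$, I would associate to it the $\partial$-$\partial$ arc $\widehat\gamma$ obtained by pushing the endpoint slightly off $p$, running a small loop around $p$, and coming back; conversely, every $\partial$-$\partial$ arc that cobounds no essential subsurface with $C$ other than a neighborhood of $p$ arises this way. The key compatibility to establish is that $\gamma \prec_\ri \psi(\gamma)$ (for $\partial$-$P$ arcs, comparison near $*_C$) holds if and only if $\widehat\gamma \prec_\ri \psi(\widehat\gamma)$, and similarly $\gamma = \psi(\gamma) \iff \widehat\gamma = \psi(\widehat\gamma)$; this is because the germ at $*_C$ of $\widehat\gamma$ agrees with that of $\gamma$ and $\psi$ fixes $P$ setwise, so the two comparisons carry the same information.

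For part (1), suppose $\psi$ is $\partial$-$\partial$ right-veering with respect to $C$ and let $\gamma$ be any $\partial$-$P$ arc based on $C$, ending at $p \in P$. Form $\widehat\gamma$ as above; since $\psi$ is $\partial$-$\partial$ right-veering, $\widehat\gamma \prec_\ri \psi(\widehat\gamma)$ or $\widehat\gamma = \psi(\widehat\gamma)$, and by the compatibility just described this transfers back to $\gamma \prec_\ri \psi(\gamma)$ or $\gamma = \psi(\gamma)$. Hence $\psi$ is $\partial$-$P$ right-veering with respect to $C$. The only subtlety here is choosing $\widehat\gamma$ so that it is genuinely a $\partial$-$\partial$ arc (not boundary-parallel), which one arranges by taking the loop around $p$ small and noting that $\gamma$ itself was not boundary-parallel in $S \setminus P$.

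For part (2), assume $\psi$ is $\partial$-$P$ right-veering with respect to $C$ but not $\partial$-$\partial$ right-veering with respect to $C$. Then there is a $\partial$-$\partial$ arc $\gamma$ based on $C$, ending at some $C'$, with $\psi(\gamma) \prec_\ri \gamma$. I want to show $\psi$ is special, i.e. $C' \neq C$ and $\psi(\gamma) = T_{C'}^{-n}(\gamma)$ for some $n>0$. First, $C' \neq C$: if $\gamma$ ran from $C$ to $C$, then by pushing one endpoint off $C$ and around near a marked point one can manufacture a $\partial$-$P$ arc based on $C$ that is also taken to its left by $\psi$ (using that there is at least one marked point near the relevant end, or by a direct argument that $\psi(\gamma)\prec_\ri\gamma$ for a $C$-to-$C$ arc forces non-right-veeringness on some $\partial$-$P$ arc), contradicting $\partial$-$P$ right-veeringness — this case analysis is the part I expect to require the most care. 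Second, given $C'\neq C$, I would argue that $\psi(\gamma)$ and $\gamma$ must be isotopic after capping $C'$: if not, then $\psi(\gamma)$ and $\gamma$ differ by more than boundary twisting at $C'$, and a $\partial$-$P$ arc obtained by deflecting $\gamma$ to a marked point near $C$ would again be sent to its left by $\psi$, contradicting the hypothesis. Hence $\psi(\gamma) = T_{C'}^{-n}(\gamma) T_C^{m}(\cdots)$, and the condition $\psi(\gamma)\prec_\ri\gamma$ together with $\partial$-$P$ right-veeringness (which controls the germ at $*_C$) forces the $C$-twisting to vanish and $n>0$, giving exactly the ``special'' conclusion.

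The main obstacle is the first bullet of part (2) — ruling out the case where the offending $\partial$-$\partial$ arc $\gamma$ has both endpoints on $C$. In that situation one must produce from $\gamma$ an actual $\partial$-$P$ arc based on $C$ violating right-veeringness, and this requires knowing $P \neq \emptyset$ and using the position of the marked points relative to $\gamma$; the bigon/efficiency technology of Section~\ref{sec:qveer} (in particular arguments in the spirit of Proposition~\ref{lemma:b-right-p-bigon}) should supply the needed control, but the geometric bookkeeping of how the small loop around a marked point interacts with $\psi(\gamma)$ near $*_C$ is the delicate point.
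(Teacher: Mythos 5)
Part (1) of your proposal is essentially the paper's argument: your ``blow-up'' $\widehat\gamma$ is exactly the boundary $\kappa$ of a regular neighborhood of the $\partial$-$P$ arc $\gamma$, and the contrapositive direction of your compatibility claim ($\psi(\gamma)\prec_\ri\gamma$ forces $\psi(\kappa)\prec_\ri\kappa$) is all that is needed; the full ``if and only if'' you assert is stronger than necessary but not problematic.

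Part (2), however, has a genuine gap. You correctly reduce to two claims (an offending $\partial$-$\partial$ arc cannot return to $C$, and must satisfy $\psi(\gamma)=T_{C'}^{-n}(\gamma)$), but in both cases the justification is only the assertion that one can ``manufacture a $\partial$-$P$ arc sent to its left by $\psi$,'' with no mechanism explaining why $\psi$ would send the manufactured arc to its left. The missing idea is interpolation: from a $\partial$-$\partial$ arc $\gamma$ with $\psi(\gamma)\prec_\ri\gamma$ one must build a $\partial$-$P$ arc $\kappa$ with $\kappa(0)=\gamma(0)$ and $\psi(\gamma)\prec_\ri\kappa\prec_\ri\gamma$; since $\psi$ fixes $\partial S$ pointwise it preserves the ordering at the base point, so $\psi(\kappa)\prec_\ri\psi(\gamma)\prec_\ri\kappa$ and $\partial$-$P$ right-veering fails. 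Producing such a $\kappa$ is the entire content of the proof: it requires putting $\gamma$ and $\psi(\gamma)$ in efficient position, forming the closed curve $\delta=\gamma|_{[0,t_k]}\ast(\psi(\gamma)|_{[0,s_1]})^{-1}$ from the initial segments up to the first intersection point $q_1$ of $\psi(\gamma)$ with $\gamma$, and analyzing the region $R$ to the left of $\delta$ together with the sign of the intersection at $q_1$. Precisely when $R$ is an unpunctured annulus and that sign is positive (the ``bad'' case) no interpolating $\kappa$ exists, and there one shows directly that $\psi(\gamma)$ is trapped in $R$, forcing $C'\neq C$ and $\psi(\gamma)=T_{C'}^{-n}(\gamma)$ — this is exactly where the special maps arise. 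In every other configuration $\kappa$ is constructed explicitly by concatenating subarcs of $\gamma$, of $\psi(\gamma)$, and an auxiliary embedded arc running to a puncture (several subcases, depending on whether $R$ carries punctures and on the order of the first two intersection points along $\gamma$). Your sketch identifies neither the interpolation mechanism nor the dichotomy that isolates the special maps; moreover the expression $\psi(\gamma)=T_{C'}^{-n}(\gamma)T_C^{m}(\cdots)$ does not parse as a statement about arcs. As written, part (2) is a plan for a proof rather than a proof.
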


\begin{proof}
We prove both (1) and (2) by showing the contrapositives. 

First we prove (1). Assume to the contrary that there is a $\partial$-$P$ arc $\gamma$ based on $C$ with $\psi(\gamma) \prec_{\sf right} \gamma$. Let $\kappa \in\A$ be a properly embedded arc which is the boundary of a regular neighborhood of $\gamma$ in $S$. Then we see that $\kappa$ is a $\partial$-$\partial$ arc with $\psi(\kappa) \prec_{\sf right} \kappa$.

To see (2), assume to the contrary that $\psi$ is not $\partial$-$\partial$ right-veering with respect to $C$ and is not special with respect to $C$.  
Then there exists a $\partial$-$\partial$ arc $\gamma$ based on $C$ such that $\psi(\gamma) \prec_{\sf right} \gamma$.
We put $\psi(\gamma)$ and $\gamma$ so that they intersect efficiently.
Our goal is to show that there exits a $\partial$-$P$ arc $\kappa$ based on $C$ with $\kappa(0)=\gamma(0)$ and $$\psi(\gamma) \prec_{\sf right} \kappa \prec_{\sf right} \gamma.$$ % if $\gamma$ is not ``bad'', which is defined in Definition~\ref{def:bad} below. 
This shows $\psi(\kappa) \prec_{\ri} \psi(\gamma) \prec_{\ri} \kappa$; hence, $\psi$ cannot be $\partial$-$P$ right-veering with respect to $C$.

If $\#(\gamma, \psi(\gamma))=m>0$, we put $\Int(\gamma) \cap \Int(\psi(\gamma))=\{p_1,\ldots,p_m\} =\{q_1,\ldots,q_m\}$, where $p_{i}=\gamma(t_i)$ with $0 < t_1 < t_2 < \cdots < t_m < 1$ and $q_{i}=(\psi(\gamma))(s_i)$ with $0 < s_1 < s_2 < \cdots < s_m < 1$.
If $\#(\gamma, \psi(\gamma))=m=0$ we put $t_1=s_1=1$ and $p_{1}=q_{1}=\gamma(1)$.

Suppose that $q_1 = p_{k}$.
Let 
$$\delta:= \gamma|_{[0,t_k]} \ast (\psi(\gamma)|_{[0,s_1]})^{-1}$$ then $\delta$ is an oriented simple closed curve in $S \setminus P$. 
Here $\ast$ denotes concatenation of paths read from left to right, and $(-)^{-1}$ means the arc with reversed orientation. 
If $\delta$ is separating, we denote by $R$ the connected component of $S \setminus (\delta \cup P)$ that lies on the left side of $\delta$ with respect to the orientation of $\delta$. If $\delta$ is non-separating let $R:=S \setminus (\delta \cup P)$.

\begin{definition}\label{def:bad}
We say that the arc $\gamma$ is \emph{bad} if the following two properties are satisfied:
\begin{itemize}
\item $R$ is an annulus (possibly a pinched annulus if $m=0$) with no punctures. (In particular, $\delta$ is separating.) 
\item The sign of the intersection of $\gamma$ and $\psi(\gamma)$ (in this order) at $q_1$ is positive. 
\end{itemize}
\end{definition}

Assume that $\gamma$ is bad. Let $C' = \partial R \setminus \delta$. Note that $C'$ is a boundary component of $S$. 
Since $\gamma$ and $\psi(\gamma)$ intersect efficiently and $\delta$ is separating, we can see that $\psi(\gamma)$ cannot exit out of the annulus $R$ and $C\not=C'$. See Figure~\ref{fig:badarc}.
Therefore we have:

\begin{claim}
\label{claim:bad}
If $\gamma$ is bad then $C' \neq C$ and $\psi(\gamma) = T_{C'}^{-n}(\gamma)$ for some  $n>0$.
\end{claim}

\begin{figure}[htbp]
\begin{center}
\includegraphics*[bb= 185 615 441 706,width=90mm]{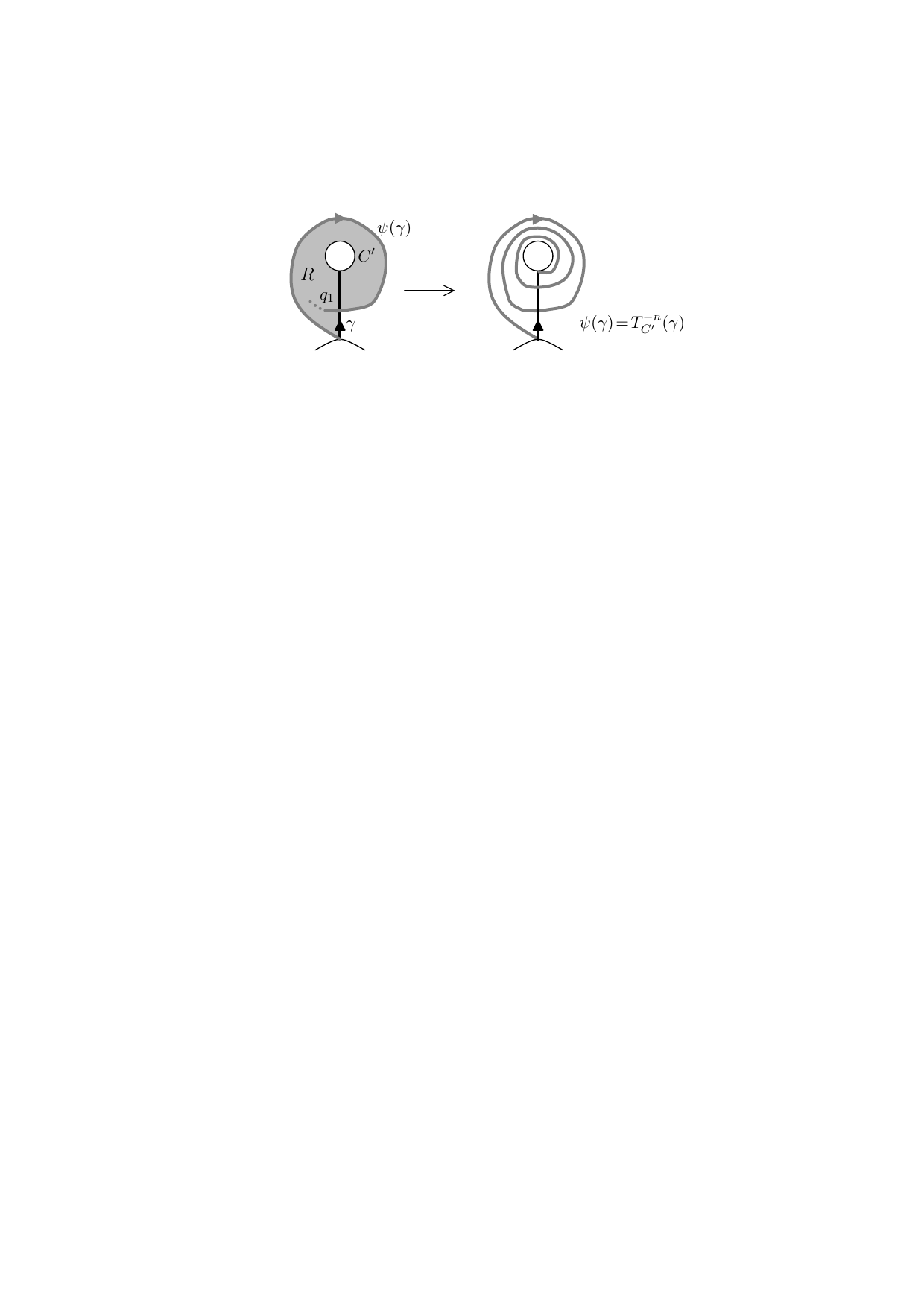}
\caption{A bad arc $\gamma$  and its image $\psi(\gamma)$.}
\label{fig:badarc}
\end{center}
\end{figure}

Since we assume that $\psi$ is 
not $\partial$-$\partial$ right-veering with respect to $C$ and is not special with respect to $C$, Claim~\ref{claim:bad} implies that $\gamma$ is not bad.

Knowing that $\gamma$ is not bad, we consider two cases to construct $\kappa$:

\noindent
\underline{\bf Case 1}: $R$ is an annulus with punctures or a non-annulus surface with or without punctures.

%\noindent
The sign of the intersection of $\gamma$ and $\psi(\gamma)$ at $q_1$ can be either positive or negative. 
Take an arc $\gamma'$ in $S \setminus (P \cup \gamma \cup \delta)$ which connects $q_1$ and some puncture point and efficiently intersects $\psi(\gamma)|_{[s_1, 1]}$.
 
\noindent
\underline{\it Case 1A}:  
There exists such an arc $\gamma'$ which  lies on the left side of $\gamma$ near $q_1$. 

%\noindent
In this case, define  $\kappa := \gamma|_{[0,t_k]}\ast \gamma'$.

\noindent
\underline{\it Case 1B}: 
No such arc can exist on the left side of $\gamma$ near $q_1$, so  $\gamma'$ lies on the right side of $\gamma$ near $q_1$.

%\noindent
%If $\delta$ is separating and $R$ is a punctured disk or a punctured annulus
If $R$ contains punctures 
then let %$\kappa \subset (R \setminus (R\cap \gamma))$ 
$\kappa\subset R$ be an arc connecting $\gamma(0)$ and one of the punctures in $R$ and satisfying $\psi(\gamma) \prec_\ri \kappa \prec_\ri \gamma$ and $
\Int(\kappa) \cap\delta=\emptyset$. (We do not use $\gamma'$ here.)

Now we may assume that $R$ is a %planar surface with more than two boundary components or a surface with genus $\geq 1$. 
non-annular surface with no punctures.
We can take an arc $\gamma''$ in $R \setminus (R \cap \gamma')$ such that:
\begin{itemize}
\item 
$\gamma''(0)=\gamma''(1)=\gamma(0)$.
\item 
$\psi(\gamma) \prec_{\ri} \gamma'' \prec_{\ri} \gamma$.
\item
$\Int(\gamma'') \cap \delta = \emptyset$.
\item 
$\gamma''$ is not parallel to $\delta$.
\item
$\gamma''$ and $\gamma$ efficiently intersect.
\end{itemize}
Let 
$q'':=\gamma''(u) = \gamma(t) \in \gamma''\cap\gamma$ be the intersection point such that $\gamma''|_{(0,u)}$ is disjoint from $\gamma$. If $\Int(\gamma'') \cap \Int(\gamma)=\emptyset$ then we take $q'':=\gamma''(1)=\gamma(0)$. 
Namely, $u=1$ and $t=0$. 
Define:
$$
\kappa:= \left\{
\begin{array}{ll}
\gamma''|_{[0,u]} \ast \gamma|_{[t, t_k]} \ast \gamma' & (\mbox{if } t<t_k) \\
\gamma''|_{[0,u]} \ast \gamma|_{[t_k, t]} \ast \gamma' & (\mbox{if } t_k<t)
\end{array}
\right.
$$

\begin{figure}[htbp]
\begin{center}
\includegraphics*[bb= 139 562 471 657, width=110mm]{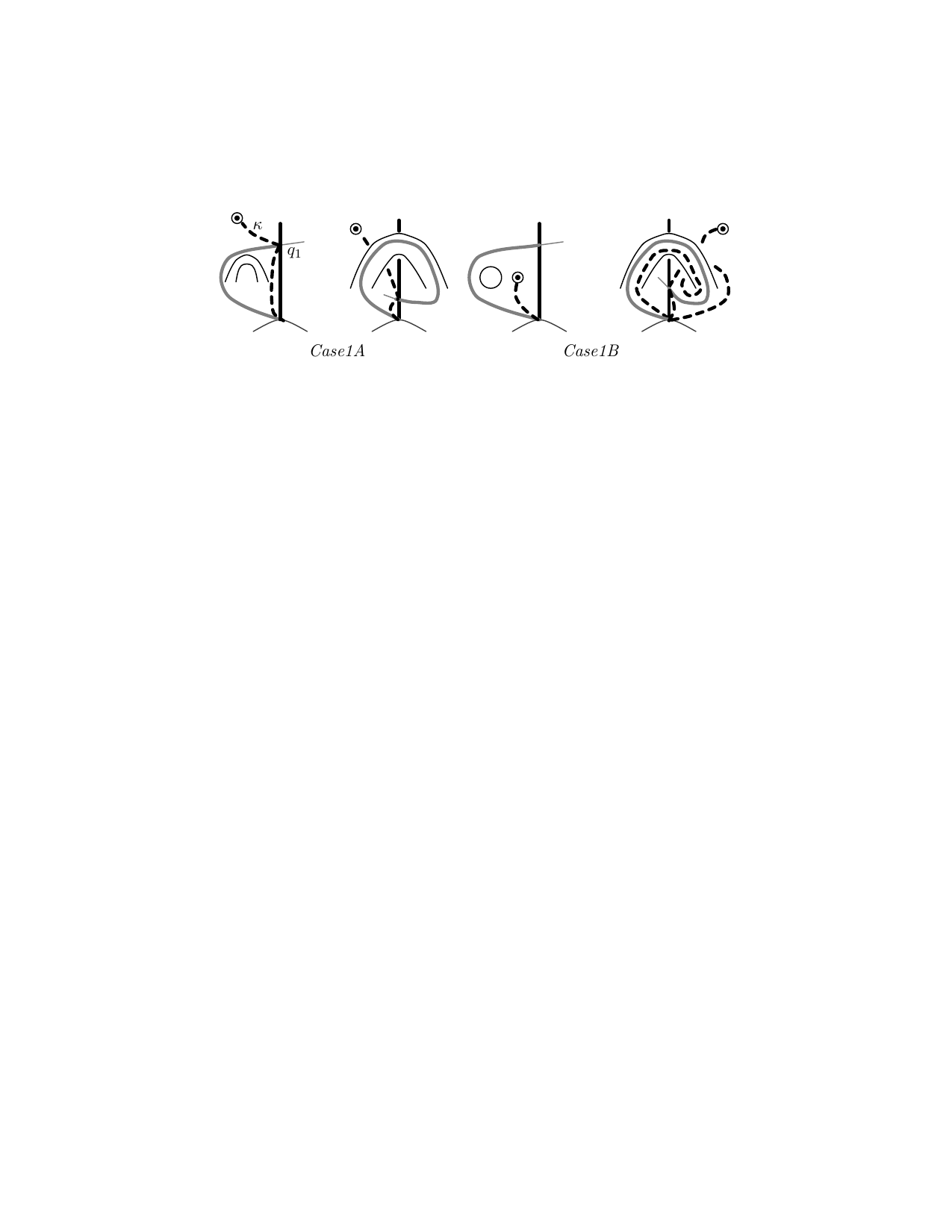}
\caption{{\bf Case 1}. A $\partial$-$P$ arc $\kappa$ (dashed arc) is chosen so that it does not intersect $\gamma$ (black bold line) and $\psi(\gamma)|_{[0,s_1]}$ (gray bold arc), possibly with one exceptional point $q_1$.}
\label{fig:case1}
\end{center}
\end{figure}

\noindent
\underline{\bf Case 2}: $R$ is an annulus with no punctures, and the sign of the intersection of $\gamma$ and $\psi(\gamma)$ at $q_1$ is negative.

\noindent
%Since $\gamma$ is not bad the sign of the intersection of $\gamma$ and $\psi(\gamma)$ at $q_1$ is negative. 
Let $k' (\neq k)$ be the number satisfying $q_2=p_{k'}$.

\noindent
\underline{\it Case 2A}: $k' < k$. 

\noindent
Since $\delta$ is separating the sign of the intersection of $\gamma$ and $\psi(\gamma)$ at $q_2$ is positive. 
Take an arc $\gamma'$ in $S \setminus (P \cup \gamma \cup \psi(\gamma)|_{[0,s_2]})$ which connects $q_2$ and a puncture point and efficiently intersects $\psi(\gamma)|_{[s_2,1]}$.
Then put $\kappa:= \psi(\gamma)|_{[0,s_1]} \ast (\gamma|_{[t_{k'},t_{k}]})^{-1} \ast \gamma'$.

\noindent
\underline{\it Case 2B}: $k< k'$. 

\noindent
Let $\gamma'$ be an arc in $S \setminus (P \cup \gamma \cup \psi(\gamma)|_{[0,s_2]})$ that connects $\gamma(0)$ and a puncture point.
Put 
\[
\kappa := \begin{cases}
\gamma|_{[0,t_{k'}]} \ast (\psi(\gamma)|_{[s_{1},s_{2}]})^{-1} \ast (\gamma|_{[0,s_{1}]})^{-1} \ast \gamma' & ( \text{ if } \gamma \prec_{\ri} \gamma'
)\\
\gamma|_{[0,t_{k'}]} \ast (\psi(\gamma)|_{[s_{1},s_{2}]})^{-1} \ast C \ast (\gamma|_{[0,s_{1}]})^{-1} \ast \gamma' &  ( \text{ if } \gamma' \prec_{\ri} \gamma)
\end{cases}
\]
In the second case in order to make $\kappa$ embedded it turns along $C$. 
\begin{figure}[htbp]
\begin{center}
\includegraphics*[bb= 140 610 482 708,width=110mm]{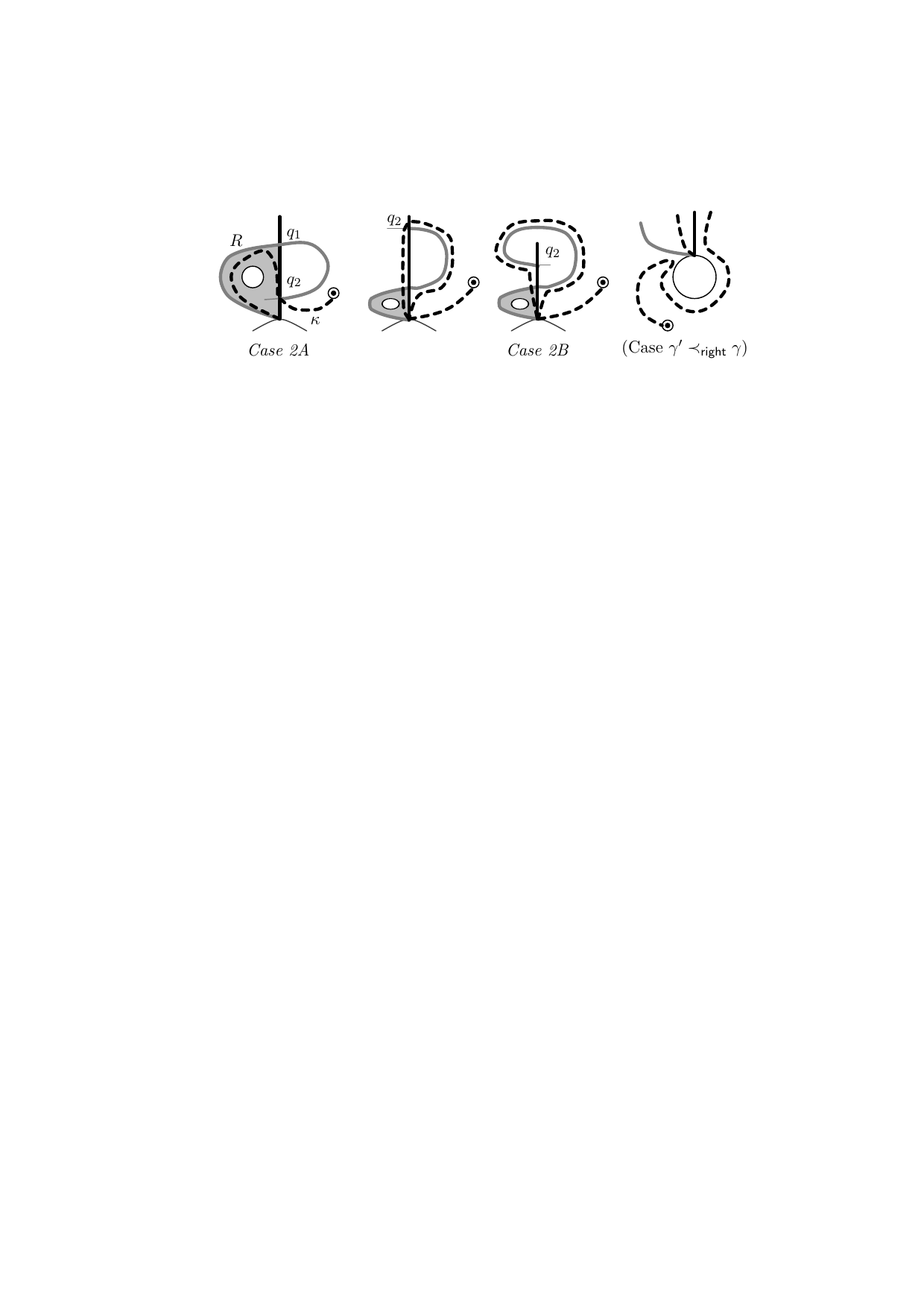}
\caption{{\bf Case 2}.
Construction of a $\partial$-$P$ arc $\kappa$ (dashed).
$\kappa$ does not intersect $\gamma$ (black bold line) and $\psi(\gamma)|_{[0,s_2]}$ (gray bold arc), possibly with exceptions near $q_1$, $q_2$ and $\gamma(1)$ (if $\gamma(1)\in C$).}
\label{fig:case2}
\end{center}
\end{figure}
\end{proof}

As a consequence of Theorem~\ref{theorem:equivalencerv}, the three notions of right-veering with respect to \emph{all} the boundary components, which is a condition closely related to tight contact structures, are equivalent. 
In particular, if $S$ has connected boundary then the three notions are equivalent. 

\begin{corollary}
\label{cor:rveer}
For $\psi \in \MCG(S,P)$ the following are equivalent.
\begin{enumerate}
\item $\psi$ is $\partial$-$(\partial+P)$ right-veering.
\item $\psi$ is $\partial$-$\partial$ right-veering.
\item $\psi$ is $\partial$-$P$ right-veering.
\end{enumerate}
\end{corollary}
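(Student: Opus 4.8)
The plan is to deduce the corollary from Theorem~\ref{theorem:equivalencerv} by a boundary-component bookkeeping argument. First I would record the trivial implications. Since every $\partial$-$P$ arc and every $\partial$-$\partial$ arc based on $C$ is one of the arcs considered in the $\partial$-$(\partial+P)$ condition, it is immediate that (1)$\Rightarrow$(2) and (1)$\Rightarrow$(3) (with respect to every $C$, hence globally). Conversely, if $\psi$ is both $\partial$-$\partial$ right-veering and $\partial$-$P$ right-veering with respect to every $C$, then for each boundary component $C$ every $\partial$-$\partial$ or $\partial$-$P$ arc based on $C$ is sent weakly to its right, which is exactly the $\partial$-$(\partial+P)$ condition; so (2)$\wedge$(3)$\Rightarrow$(1). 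Thus it suffices to prove (2)$\Leftrightarrow$(3), i.e. that $\psi$ is $\partial$-$\partial$ right-veering (with respect to all boundary components) if and only if $\psi$ is $\partial$-$P$ right-veering (with respect to all boundary components).

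The direction (2)$\Rightarrow$(3) is just part (1) of Theorem~\ref{theorem:equivalencerv} applied to each boundary component $C$: if $\psi$ is $\partial$-$\partial$ right-veering with respect to $C$ then it is $\partial$-$P$ right-veering with respect to $C$, and quantifying over all $C$ gives the global statement. The substantive direction is (3)$\Rightarrow$(2). Suppose $\psi$ is $\partial$-$P$ right-veering with respect to every boundary component but, for contradiction, fails to be $\partial$-$\partial$ right-veering with respect to some boundary component $C$. By part (2) of Theorem~\ref{theorem:equivalencerv}, since $\psi$ is $\partial$-$P$ right-veering with respect to $C$ and not $\partial$-$\partial$ right-veering with respect to $C$, the map $\psi$ must be \emph{special} with respect to $C$: there is a $\partial$-$\partial$ arc $\gamma$ based on $C$, ending at a boundary component $C'$, with $C'\neq C$ and $\psi(\gamma)=T_{C'}^{-n}(\gamma)$ for some $n>0$.

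The key step is then to convert this failure at $C$ into a failure of the $\partial$-$P$ right-veering condition at $C'$, contradicting the hypothesis. The idea: the arc $\gamma$ traversed in the opposite direction is a $\partial$-$\partial$ arc $\gamma^{-1}$ based on $C'$, and $\psi(\gamma^{-1}) = T_{C'}^{-n}(\gamma^{-1})$ lies strictly to the left of $\gamma^{-1}$ at the basepoint on $C'$, since a negative Dehn twist about $C'$ drags the endpoint on $C'$ backwards; hence $\psi(\gamma^{-1})\prec_{\ri}\gamma^{-1}$ as arcs based on $C'$. Now I would push $\gamma^{-1}$ off $C'$ by running it to a nearby puncture instead: here one uses that $P\neq\emptyset$ (if $P=\emptyset$ all three notions collapse to the classical one of \cite{hkm} and there is nothing to prove, so we may assume $P\neq\emptyset$), but more care is needed to guarantee a puncture is reachable — alternatively, apply part (1) of Theorem~\ref{theorem:equivalencerv} in the form already established and instead argue directly that a special map cannot be $\partial$-$P$ right-veering with respect to \emph{all} components: the negative twist $T_{C'}^{-n}$ it contains witnesses, via an arc that starts on $C'$, encircles $C'$, and terminates at a puncture (such an arc exists because $\psi$ being special forces the relevant region near $C'$ to be an annulus whose other boundary is $C'$, and the ambient surface $S\setminus P$ has a puncture or other topology accessible from there), a $\partial$-$P$ arc $\mu$ based on $C'$ with $\psi(\mu)\prec_{\ri}\mu$. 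This contradicts that $\psi$ is $\partial$-$P$ right-veering with respect to $C'$, completing the proof.

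The main obstacle I anticipate is exactly this last geometric construction: producing, from the data ``$\psi(\gamma)=T_{C'}^{-n}(\gamma)$ with $\gamma$ a $\partial$-$\partial$ arc from $C$ to $C'$,'' an honest $\partial$-$P$ arc based on $C'$ whose image under $\psi$ veers left. One must be careful that the candidate arc is embedded, not boundary-parallel, and genuinely reaches a puncture; the cleanest route is probably to mimic the Case~2B construction in the proof of Theorem~\ref{theorem:equivalencerv} (which already builds a $\partial$-$P$ arc that ``turns along $C$''), applied with the roles of the two boundary components interchanged, using the negative twisting to get the strict inequality $\psi(\mu)\prec_{\ri}\mu$. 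Everything else is formal quantifier juggling over the finite set of boundary components together with the two implications of Theorem~\ref{theorem:equivalencerv}.
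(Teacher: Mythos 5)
Your reduction is the right one and matches how the paper intends Corollary~\ref{cor:rveer} to follow from Theorem~\ref{theorem:equivalencerv}: the implications (1)$\Leftrightarrow$(2)$\wedge$(3) are formal, (2)$\Rightarrow$(3) is part (1) of the theorem applied componentwise, and the only substantive point is showing that a map which is $\partial$-$P$ right-veering with respect to \emph{every} boundary component cannot be special with respect to any $C$. It is in this last step that your argument has a genuine gap. You propose to exhibit a $\partial$-$P$ arc $\mu$ based on $C'$ with $\psi(\mu)\prec_{\ri}\mu$, ``witnessed by the negative twist $T_{C'}^{-n}$ contained in $\psi$.'' But $\psi$ does not contain a negative twist in any global sense: specialness only tells you that $\psi(\gamma)=T_{C'}^{-n}(\gamma)$ for the particular left-veering arcs $\gamma$, and you have no information whatsoever about $\psi(\mu)$ for a newly drawn arc $\mu$ that encircles $C'$ and runs to a puncture. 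The mechanism that actually produces $\psi(\mu)\prec_{\ri}\mu$ in the proof of Theorem~\ref{theorem:equivalencerv} is to place $\mu$ \emph{strictly between} the image arc and the original arc and invoke the $\psi$-equivariance of $\prec_{\ri}$; if you go that route at $C'$ you must still verify that such an intermediate $\partial$-$P$ arc exists there, which is precisely the delicate case analysis the theorem's proof was built to handle, so it cannot simply be waved at.

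The clean way to close the gap is to apply Theorem~\ref{theorem:equivalencerv}(2) a second time, at $C'$, rather than redo its construction. Since $\psi(\gamma)=T_{C'}^{-n}(\gamma)$, the reversed arc $\gamma^{-1}$ based at a point of $C'$ satisfies $\psi(\gamma^{-1})=T_{C'}^{-n}(\gamma^{-1})\prec_{\ri}\gamma^{-1}$ (a negative twist about the \emph{base} component moves an arc to the left at its base point), so $\psi$ is not $\partial$-$\partial$ right-veering with respect to $C'$. If $\psi$ were $\partial$-$P$ right-veering with respect to $C'$, it would have to be special with respect to $C'$, and the second bullet of Definition~\ref{def:special} applied to $\gamma^{-1}$ (whose terminal component is $C$) would force $T_{C'}^{-n}(\gamma^{-1})=T_{C}^{-m}(\gamma^{-1})$ for some $m>0$, i.e.\ $T_{C}^{m}T_{C'}^{-n}(\gamma)=\gamma$. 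This is impossible once $P\neq\emptyset$: a regular neighborhood of $C\cup\gamma\cup C'$ is a pair of pants whose third boundary curve cannot bound an unpunctured disk, so the subsurface is essential and the twists about $C$ and $C'$ act independently on the isotopy class of $\gamma$. Hence $\psi$ fails to be $\partial$-$P$ right-veering with respect to $C'$, the desired contradiction. (Separately, your parenthetical about $P=\emptyset$ is off: in that case the $\partial$-$P$ condition is vacuous rather than classical, and the equivalence actually fails for, say, $T_{C}^{-1}$ on an annulus, so the corollary implicitly assumes $P\neq\emptyset$; this is immaterial in the braid context but should not be cited as ``nothing to prove.'')
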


Therefore in the case of the braid group $B_{n}=\MCG(D^2, \{n\text{ points}\})$ the proposed definitions of right-veering in \cite{bvv} and \cite{bg,pl} are the same.
Also, we remark that the subtle difference between $\partial$-$P$ right-veering with respect to $C$ and $\partial$-$\partial$ right-veering with respect to $C$   (existence of a special mapping class $\psi$) only occurs when $c(\psi,C)=0$.

\begin{remark}
One may come up with still different candidates of right-veering. Instead of using embedded arcs, one may use immersed arcs. 
However, one can check that immersed $\partial$-($\partial+P$) (resp. $\partial$-$\partial$, $\partial$-$P$) right-veering with respect to $C$ is equivalent to the (embedded) $\partial$-($\partial+P$) (resp. $\partial$-$\partial$, $\partial$-$P$) right-veering with respect to $C$.
\end{remark}

\section*{Acknowledgements}
The authors would like to thank John Etnyre for many useful comments. % including one on Proposition~\ref{prop:covering}. 
TI was partially supported by JSPS KAKENHI grant number 15K17540 and 16H02145.
KK was partially supported by NSF grant DMS-1206770 and Simons Foundation Collaboration Grants for Mathematicians.

\end{document}